\documentclass[11pt,reqno]{amsart}

\usepackage[T1]{fontenc}
\usepackage[utf8]{inputenc}
\usepackage{amsmath,amssymb,amsthm,mathtools}
\usepackage{booktabs}
\usepackage{longtable}
\usepackage{tikz}
\usepackage{array}
\usepackage{microtype}
\usepackage[margin=1.15in]{geometry}
\usepackage[colorlinks=true,linkcolor=blue!55!black,citecolor=blue!55!black,
            urlcolor=blue!55!black]{hyperref}

\usetikzlibrary{calc}

\theoremstyle{plain}
\newtheorem{theorem}{Theorem}[section]
\newtheorem{proposition}[theorem]{Proposition}
\newtheorem{lemma}[theorem]{Lemma}
\newtheorem{corollary}[theorem]{Corollary}
\theoremstyle{definition}

\newtheorem{remark}[theorem]{Remark}

\DeclareMathOperator{\conv}{conv}
\DeclareMathOperator{\rank}{rank}
\DeclareMathOperator{\sig}{sign}
\newcommand{\HH}{\mathbb{H}}
\newcommand{\RR}{\mathbb{R}}
\newcommand{\QQ}{\mathbb{Q}}
\newcommand{\ZZ}{\mathbb{Z}}
\newcommand{\Gr}{G}
\newcommand{\PP}{P_{6,10}}

\newcommand{\NumOrderTypes}{14{,}309{,}547}
\newcommand{\NumTypes}{387}
\newcommand{\NumTypesMF}{381}
\newcommand{\NumSurvivors}{54}
\newcommand{\NumSearched}{52}

\newcommand{\NumDegKilled}{76}
\newcommand{\NumBigMF}{7}
\newcommand{\NumPrism}{119}
\newcommand{\NumAfterComb}{185}
\newcommand{\NumAfterDegMF}{304}
\newcommand{\NumRequired}{11}
\newcommand{\NumRedundant}{43}
\newcommand{\NumRedundantSearched}{41}

\newcommand{\DFiveWildSurvived}{9}

\newcommand{\WildRootBox}{19}

\newcommand{\WildMinMargin}{1.1\times10^{-3}}

\newcommand{\DFourCascade}{822{,}432}
\newcommand{\DFourPair}{132{,}571}
\newcommand{\DFourFallback}{132{,}571}
\newcommand{\PlainRejected}{545}

\newcommand{\DFiveWildCert}{2{,}400}

\newcommand{\DFiveWildScreenPass}{11}
\newcommand{\DFiveWildPassRefuted}{0}
\newcommand{\DFiveWildCensus}{141{,}033}
\newcommand{\DFiveWildJointKilled}{2{,}400}
\newcommand{\NumCascade}{546}
\newcommand{\NumPlain}{546}
\newcommand{\NumWild}{406}
\newcommand{\NumRequiredLess}{10}
\newcommand{\NumThreeFree}{2}

\newcommand{\NumSubtrees}{11}
\newcommand{\NumAssignments}{952}
\newcommand{\CPUHours}{0.96}

\newcommand{\RealizingP}{3}
\newcommand{\RealizingProfile}{2^{3}3^{3}5^{3}}

\newcommand{\ThreeFreeProfile}{2^{6}}
\newcommand{\NumGramConfigs}{12}
\newcommand{\AutTypeOrder}{24}
\newcommand{\AutStabOrder}{2}
\newcommand{\WeightLeaves}{1}
\newcommand{\DFiveTotal}{51}
\newcommand{\DFiveTypes}{109}
\newcommand{\DFiveCPUHours}{2.4}
\newcommand{\MaxDepth}{0}
\newcommand{\DFiveRealizing}{0\!\to\!22, 6\!\to\!18, 19\!\to\!6, 29\!\to\!3, 5\!\to\!1, 63\!\to\!1}

\newcommand{\DFourTarget}{348}

\begin{document}

\title[Compact hyperbolic Coxeter six-dimensional polytopes with ten facets]
      {Compact hyperbolic Coxeter six-dimensional polytopes\\ with ten facets}

\author{John Mcleod}
\address{Solvo.ai}
\email{john@solvo.ai}

\date{\today}

\subjclass[2020]{Primary 51F15; Secondary 20F55, 52B11, 52B40}
\keywords{hyperbolic Coxeter polytope, reflection group, Gale diagram,
          order type, computer-assisted classification}

\begin{abstract}
We show that, up to isometry, there is exactly one compact hyperbolic Coxeter
$6$-polytope with $10$ facets: the polytope $\PP$ whose Coxeter diagram appears
as Figure~5 of Burcroff \cite{Burcroff2024}, attributed there to Bugaenko
\cite{Bugaenko1984}. Together with the results
of Lann\'er, Kaplinskaja and Esselmann, Felikson--Tumarkin (no such polytopes for
$d\ge 8$; uniqueness for $d=7$), and the independent classifications of
Burcroff and Ma--Zheng in dimensions $4$ and $5$ ($348$ and $51$ polytopes),
this completes the classification of compact hyperbolic Coxeter $d$-polytopes
with $d+4$ facets in every dimension. The proof is computer assisted. We
enumerate all $\NumTypes$ candidate combinatorial types from the complete
database of planar order types on $10$ points via affine Gale duality, reduce to
$\NumRequired$ types using two combinatorial consequences of Lann\'er's
classification, the classification of compact Coxeter $d$-polytopes with $d+2$
facets, and the classification of compact Coxeter $5$-polytopes
with $9$ facets, and decide those $\NumRequired$ by an exhaustive search over
Coxeter
labellings with no a~priori bound on the dihedral angles. The search terminates
with machine-checked exhaustion certificates covering every searched type; the unique surviving Gram matrix is certified exactly over
$\QQ(\sqrt2,\sqrt5)$ and independently by \texttt{CoxIter}. The emptiness verdicts
are exact as well, and in two stages: forward checking, which empties ten of the
$\NumRequired$ types on its own, decides ellipticity and Lann\'er membership
combinatorially and by integer and $\QQ(\sqrt2,\sqrt3,\sqrt5)$ arithmetic; and every
labelling that reaches a screen and is not accepted is refuted exactly over that
field --- for all but $\WildRootBox$ of them by the non-vanishing of a single
determinant, and otherwise by interval arithmetic on a compactified domain. So no
verdict rests on a floating-point tolerance, on a numerical search, or on a bound on
the ultraparallel weights. The same code path,
unchanged, reproduces both completed censuses in this family exactly: the $51$
polytopes in dimension $5$ and the $348$ in dimension $4$.
Code, data, per-type certificates and re-run instructions are publicly
available. Most of the software was written by an AI assistant under the
author's direction; see \S\ref{sec:provenance}.
\end{abstract}

\maketitle

\section{Introduction}
\label{sec:intro}

Let $\HH^d$ denote real hyperbolic $d$-space. A \emph{hyperbolic Coxeter
polytope} is a convex polytope $P\subset\HH^d$, bounded by finitely many
geodesic hyperplanes, all of whose dihedral angles have the form $\pi/m$ with
$m\in\ZZ_{\ge 2}$. Such a polytope is the fundamental domain of the discrete
group generated by reflections in its facets. We are concerned with
\emph{compact} such polytopes, and we write $n$ for the number of facets and
$k = n-d$.

The classification of compact hyperbolic Coxeter $d$-polytopes with $d+k$ facets
has been completed one value of $k$ at a time. For $k=1$ (simplices) this is
Lann\'er \cite{Lanner1950}; for $k=2$ it is Kaplinskaja \cite{Kaplinskaja1974}
and Esselmann \cite{Esselmann1996}; for $k=3$ it is Esselmann
\cite{Esselmann1994} and Tumarkin \cite{Tumarkin2004}. For $k=4$, Felikson and
Tumarkin \cite{FeliksonTumarkin2008} proved that no such polytope exists for
$d\ge 9$ (their Theorem~1) nor for $d=8$ (their Theorem~2), and that the
$7$-dimensional example with $11$ facets, found by Bugaenko
\cite{Bugaenko1984}, is unique (their Theorem~3). Burcroff \cite{Burcroff2024}
and, independently and by different methods, Ma and Zheng
\cite{MaZheng2024,MaZheng2023} classified the cases $d=4$ and $d=5$, finding
$348$ and $51$ polytopes respectively. This left exactly one gap. In Burcroff's
words \cite[\S1]{Burcroff2024}, ``the only remaining dimension where new
polytopes may arise is $d=6$'', where ``the only known polytope was constructed
by Bugaenko''.

Our main result closes that gap.

\begin{theorem}
\label{thm:main}
Up to isometry there is exactly one compact hyperbolic Coxeter $6$-polytope with
$10$ facets, namely the polytope $\PP$ whose Coxeter diagram is shown in
Figure~5 of \cite{Burcroff2024}.
\end{theorem}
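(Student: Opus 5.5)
The proof has two halves: show that $\PP$ exists, and show that nothing else does. Existence is a direct verification. I would write down the Gram matrix $G(\PP)$ of Bugaenko's diagram, whose entries are $-\cos(\pi/m_{ij})$ for intersecting facets and $-\cosh\rho_{ij}$ for ultraparallel ones. Working exactly over $\QQ(\sqrt2,\sqrt5)$, I would check three things: $G(\PP)$ has signature $(6,1)$ and rank $7$; every vertex of the prescribed combinatorial type corresponds to an elliptic principal submatrix of rank $6$; and every missing face corresponds to a non-elliptic subdiagram. By Vinberg's criterion this is exactly compactness together with the right combinatorics, and \texttt{CoxIter} serves as an independent check.

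For uniqueness, I would first enumerate all combinatorial types. A simple compact $6$-polytope with $10$ facets has a Gale diagram consisting of $10$ points in $\RR^{3}$. Projectivising, one can use affine Gale diagrams of $10$ signed points in the plane. The combinatorial type is determined by the order type of these points together with their signs. So I would run through the complete database of $\NumOrderTypes$ planar order types on $10$ points and all sign patterns. For each, I would test the Gale condition that every open half-plane contains at least two positive points, counting negative points with reversed orientation. I would keep only simple polytopes and reduce modulo relabelling and isomorphism, giving $\NumTypes$ types.

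Next I would prune these types combinatorially, with no angles assigned yet. Three tools are available:
\begin{itemize}
\item Lann\'er's list bounds the size of minimal non-elliptic ("missing face") configurations, so any type whose missing faces are incompatible with Lann\'er diagrams is discarded.
\item Every facet is itself a compact Coxeter $5$-polytope. If a facet has $7$ facets it lies in the Esselmann $d+2$ list, which is tiny in dimension $5$. If a facet has $9$ facets it must be one of the $51$ known polytopes. Also, $5$-dimensional facets with $8$ facets are heavily constrained by the $k=3$ classification.
\item Prisms and other forced degenerations can be killed directly.
\end{itemize}
This should leave a small set of types, about $\NumRequired$.

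For each surviving type I would run an exhaustive backtracking search over Coxeter labellings $m_{ij}$ on intersecting pairs. Since there is no a priori bound on $m_{ij}$, the unboundedness has to be handled structurally. Every vertex link is a rank-$6$ elliptic diagram, and the finite Coxeter groups of rank $\le6$ with a large label $m$ involve only dihedral components $I_2(m)$. Lann\'er-type constraints from adjacent subdiagrams then force such components to be isolated, and the Gram determinant conditions make large $m$ behave like a limit that can be treated symbolically. Forward checking would propagate ellipticity of vertex subdiagrams and the Lann\'er condition on missing faces. Each complete labelling then leaves the ultraparallel weights $\rho_{ij}>0$ as unknowns, to be fixed by requiring $\rank G=7$ with signature $(6,1)$. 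When that system is overdetermined I would refute exactly via a nonvanishing determinant over $\QQ(\sqrt2,\sqrt3,\sqrt5)$. Otherwise I would use interval arithmetic on a compactified domain, substituting $t=1/\cosh\rho$ so that the domain $[0,1]$ is compact. The searched types should yield exactly one Gram matrix, and I would identify it with $G(\PP)$.

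The main obstacle is this last stage. One must guarantee exhaustion despite unbounded angle labels and unbounded ultraparallel weights, and make every rejection exact rather than numerical. I would first try to show that each free weight enters a $7\times7$ or $8\times8$ minor linearly or quadratically, so that rank conditions eliminate it in closed form. Interval methods would be reserved for the few residual cases.
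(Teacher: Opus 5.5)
Your proposal follows the paper's proof essentially step for step. The pipeline is the same: order-type enumeration of planar affine Gale diagrams; combinatorial pruning from Lann\'er's classification and from the fact that every facet is a compact Coxeter $5$-polytope (the $d+2$ classification forces $\Delta^4\times I$, and Ma--Zheng's $d=5$ classification constrains facets meeting all nine others); then a forward-checked labelling search in which large labels are confined to isolated $I_2(m)$ components and treated as an unknown, with exact refutation by a non-vanishing $8\times8$ minor over $\QQ(\sqrt2,\sqrt3,\sqrt5)$ or by interval arithmetic after compactifying the weights, and exact certification of the single survivor. The differences are only of efficiency: you scan all sign patterns instead of using Burcroff's Theorem~3.5 together with the $p\ge2$ restriction, which is what gives exactly $\NumTypes$ types. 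Your extra $d+3$ filter on facets with eight facets is a valid tool that the paper does not need.
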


Theorem~\ref{thm:main} is proved by an exhaustive computer search whose
reductions are justified in \S\ref{sec:completeness}; the assumptions on which
it rests are collected and discussed candidly in \S\ref{sec:assumptions}.
Combining it with the results quoted above gives the following.

\begin{corollary}
\label{cor:complete}
The classification of compact hyperbolic Coxeter $d$-polytopes with $d+4$ facets
is complete in every dimension. Such polytopes exist precisely for
$2\le d\le 7$ \cite{FeliksonTumarkin2008}; in dimensions $2$ and $3$ there are
infinitely many, and for $d\ge4$ their numbers up to isometry are
\[
\begin{array}{c|ccccc}
d      & 4   & 5  & 6 & 7 & \ge 8\\\hline
\#     & 348 & 51 & \mathbf{1} & 1 & 0
\end{array}
\]
\end{corollary}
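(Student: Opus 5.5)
The corollary is an assembly of known results with Theorem~\ref{thm:main}, so the plan is to go dimension by dimension and cite the appropriate classification in each range. There is no new mathematics beyond checking that the cited statements cover every $d$, and that they use the same notion of ``compact hyperbolic Coxeter polytope'' and the same equivalence, namely up to isometry.

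\emph{Large $d$.} For $d\ge 9$, cite Theorem~1 of \cite{FeliksonTumarkin2008} (no compact Coxeter $d$-polytope with $d+4$ facets exists), and for $d=8$ cite their Theorem~2. For $d=7$, their Theorem~3 gives uniqueness of Bugaenko's polytope with $11$ facets, and its existence is Bugaenko's construction \cite{Bugaenko1984}. This gives the entries $1$ and $0$ in the last two columns.

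\emph{Middle range.} For $d=4$ and $d=5$, cite the independent classifications of Burcroff \cite{Burcroff2024} and Ma--Zheng \cite{MaZheng2024,MaZheng2023}, which give $348$ and $51$. Since these counts are quoted from two independent sources, one should note that they agree. For $d=6$, Theorem~\ref{thm:main} gives exactly one polytope, $\PP$.

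\emph{Small $d$.} For $d=2$ the polytopes are compact hyperbolic hexagons with angles $\pi/m_i$. Poincar\'e's polygon theorem shows that such a hexagon exists whenever $\sum_i 1/m_i<4$, which holds for all $m_i\ge2$ since the sum is at most $3$. Hence there are infinitely many, for example with all $m_i$ arbitrary. For $d=3$, Andreev's theorem realizes infinitely many combinatorial types and angle assignments on polytopes with $7$ facets; for instance, all right-angled ones exist whenever the combinatorics admits no prismatic $3$- or $4$-circuits. I would exhibit one explicit infinite family, or simply note that the angles can be varied freely subject to Andreev's inequalities. The phrase ``precisely for $2\le d\le7$'' then follows from the existence for $d\le 7$ and nonexistence for $d\ge8$; $d=1$ is excluded trivially, since a segment has only $2$ facets, not $5$.

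The only step with any real content is $d=6$, which is Theorem~\ref{thm:main} itself and is established by the computer search in the rest of the paper. For the corollary, the main care needed is at $d=3$: one must verify that there is a $3$-polytope with $7$ facets that has no prismatic $3$- or $4$-circuits, or else choose non-right angles. A pentagonal prism with non-right angles satisfying Andreev's conditions, for instance with dihedral angles $\pi/m$ for large $m$ along the prism's lateral edges, would be my first attempt.
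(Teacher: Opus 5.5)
Your assembly for $d\ge4$ and your hexagon argument for $d=2$ are correct and match the paper. The paper states the corollary as the combination of Theorem~\ref{thm:main} with Felikson--Tumarkin ($d\ge7$) and Burcroff/Ma--Zheng ($d=4,5$), and gives no argument at all for $d\le3$, treating it as classical. The gap is at $d=3$. There are only finitely many simple $3$-polytopes with $7$ faces, so the infinitude must come from the angles, and both concrete routes you propose for this fail.

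\emph{Right angles are impossible with $7$ faces.} In a right-angled compact polyhedron every face angle is $\pi/2$, so by Gauss--Bonnet every face has at least $5$ sides. The Euler relation $\sum_F(6-k_F)=12$ for simple polyhedra then forces at least $12$ faces, so Pogorelov's hypothesis is never met here.

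\emph{Your pentagonal prism does not exist either.} Write $\pi/a_i$ and $\pi/b_i$ for the angles of the lateral face $L_i$ with the bases $T$ and $B$, and $\pi/m_i$ for the angle along $L_i\cap L_{i+1}$. Andreev's vertex condition at $T\cap L_i\cap L_{i+1}$ is $1/a_i+1/a_{i+1}+1/m_i>1$. The only spherical triangle groups with a label $\ge6$ are $(2,2,m)$, so $m_i\ge6$ forces $a_i=a_{i+1}=b_i=b_{i+1}=2$. If every lateral label is large, all base angles are $\pi/2$. Then each prismatic $4$-circuit $(T,L_i,B,L_j)$ with $L_i,L_j$ non-adjacent has angle sum exactly $2\pi$, violating Andreev's strict inequality. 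In fact at most one lateral edge can carry a label $\ge6$.

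\emph{A family that works.} Take
\begin{itemize}
\item base angles $a_1=a_2=a_3=3$, $a_4=a_5=2$, and $b_i=2$ for all $i$;
\item lateral angles $m_1=m_2=2$, $m_3=m_5=3$, and $m_4=m\ge2$ free.
\end{itemize}
Check Andreev's conditions:
\begin{itemize}
\item Every vertex group is $A_3$ or $A_1\times I_2(k)$.
\item The prism has no prismatic $3$-circuits.
\item Every non-adjacent pair $\{i,j\}$ meets $\{1,2,3\}$, so every prismatic $4$-circuit has angle sum $<2\pi$.
\item Each six-angle sum in Andreev's quadrilateral-face condition contains a $\pi/3$, hence is $<3\pi$.
\end{itemize}
Andreev's theorem then gives a compact Coxeter pentagonal prism for every $m\ge2$. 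These are pairwise non-isometric because their dihedral angles differ, which is what the corollary needs in dimension $3$.
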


The polytope $\PP$ is not new: it is drawn in \cite[Fig.~5]{Burcroff2021} and
again in \cite[Fig.~5]{Burcroff2024}, in both places attributed to Bugaenko
\cite{Bugaenko1984}, and we verified independently that the
polytope produced by our search is isomorphic to that figure edge for edge and
label for label.
What is new here is that it is the \emph{only} one, and that the search space in
which this is proved is explicitly delimited.

\subsection*{Method, in outline}
The proof has the shape common to \cite{Burcroff2024,MaZheng2024,MaZheng2023}:
first enumerate the possible combinatorial types, then decide, for each type,
which Coxeter labellings yield a Gram matrix of signature $(6,1)$.

A compact Coxeter polytope is simple (Proposition~\ref{prop:simple}), so a
$6$-polytope with $10$ facets has a $3$-dimensional Gale diagram and hence a
planar affine Gale diagram. By Burcroff's Theorem~3.5, every such polytope
arises from a planar order type on $10$ points with two interior points marked
positive. The order types on $10$ points have been completely enumerated: there
are $\NumOrderTypes$ of them \cite{AichholzerAurenhammerKrasser2002,
AichholzerKrasser2001,OrderTypeDB}. Processing all of them with an
exact-arithmetic affine Gale criterion yields $\NumTypes$ combinatorial types
(\S\ref{sec:types}).

The decisive reduction is a facet filter (\S\ref{sec:filters}). Every facet of
a compact Coxeter $6$-polytope is a compact Coxeter $5$-polytope, and Ma and
Zheng proved that exactly six simple $5$-polytopes with $9$ facets admit such a
structure \cite[\S6]{MaZheng2023}. Applying this combinatorially cuts
$\NumTypes$ types to $\NumRequired$, of which one realizes a polytope. Of the
$\NumSurvivors$ passing the facet filter alone, Lemmas~\ref{lem:degree},
\ref{lem:mfsix} and~\ref{lem:prism} remove $\NumRedundant$; what remains is decided
by search. Among the $\NumRedundant$ are the $\NumThreeFree$ types all of whose
missing faces have size $2$, which Esselmann's bound (Lemma~\ref{lem:esselmann})
would also exclude; that bound is therefore not needed here, and we record it only
because it is the cleanest reason no such type can occur.

The search (\S\ref{sec:search}) enumerates Coxeter labellings with
forward-checking the elliptic and Lann\'er conditions of
Proposition~\ref{prop:faces}, using
Ma--Zheng's wildcard device \cite[Prop.~3.5]{MaZheng2024} so that
\emph{no a~priori bound is placed on the dihedral angles}; edges of label
$\ge 7$ are carried as a continuous unknown and resolved by an integer window
scan. Acceptance is exact: $100$-digit Gauss--Newton refinement of the
ultraparallel weights, exact signature and local-isolation certification, and
finally an independent \texttt{CoxIter} check. Rejection is exact too. Forward
checking decides its two predicates without a tolerance
(Proposition~\ref{prop:exactgates}), and it is what empties ten of the
$\NumRequired$ types: only the realizing type produces a labelling that reaches a
screen at all. Each of the $\NumAssignments$ that do is refuted exactly
(Propositions~\ref{prop:plainexact} and~\ref{prop:wildexact}): substituting
$x_e=1/t_e$ compactifies the
ultraparallel directions, each $(d+2)$-principal minor becomes a polynomial with
coefficients in $\QQ(\sqrt2,\sqrt3,\sqrt5)$ on a closed rational box containing
the domain, and outward-rounded interval arithmetic certifies that one such minor
vanishes nowhere on it --- for the $\PlainRejected$ labellings without a wildcard
the certifying minor involves no unknown at all, so the refutation is the
non-vanishing of a single determinant over that field. The search consumed
$\CPUHours$ CPU-hours, screened $\NumAssignments$ labellings, and emitted an
exhaustion certificate for each of the $\NumRequired$ types. Exactly one type, and
within it exactly one polytope, survived.

\section{Preliminaries}
\label{sec:prelim}

We follow the conventions of \cite{Vinberg1985,Burcroff2024}.

\subsection{Gram matrices and diagrams}
Let $P=\bigcap_{i\in I} f_i^-\subset\HH^d$ be a Coxeter polytope with facets
$f_i$ and outward unit normals $e_i$ in the Lorentzian space $\RR^{d,1}$. The
\emph{Gram matrix} of $P$ is $\Gr = (\langle e_i,e_j\rangle)_{i,j\in I}$, so
\[
\Gr_{ii}=1,\qquad
\Gr_{ij}=
\begin{cases}
-\cos(\pi/m_{ij}) & \text{if $f_i,f_j$ meet at angle $\pi/m_{ij}$},\\
-1                & \text{if $f_i,f_j$ are parallel},\\
-\cosh\rho_{ij}<-1& \text{if $f_i,f_j$ diverge at distance $\rho_{ij}$.}
\end{cases}
\]
The associated \emph{Coxeter diagram} $\Sigma(P)$ has a node for each facet;
nodes $i,j$ are joined by an edge labelled $m_{ij}$ when $f_i\cap f_j\neq
\emptyset$ and $m_{ij}\ge 3$, left unjoined when $m_{ij}=2$, and joined by a
\emph{dashed} edge when the facets diverge. Following \cite{Burcroff2024} we
draw an edge with $m=3,4,5$ as a single, double, triple line respectively, and
label larger $m$ numerically. An edge with $m\le 5$ is said to have \emph{low
weight}; one with $m\ge 6$ is \emph{multi-multiple}.

A subdiagram is \emph{elliptic} if its Gram matrix is positive definite,
\emph{parabolic} if it is positive semidefinite and degenerate, \emph{Lann\'er}
if it is connected, all of its proper subdiagrams are elliptic, and its Gram matrix
is indefinite --- equivalently $\det\Gr<0$, which is what excludes the parabolic
case --- and \emph{superhyperbolic} if its Gram matrix has negative inertia
index $>1$. The last clause is not redundant: the affine diagram
$\smash{\widetilde{G_2}}$, a path with labels $3$ and $6$, is connected, is not
elliptic and has every proper subdiagram elliptic, yet its Gram determinant
vanishes, so it is parabolic and not Lann\'er. Dropping the clause would make
Corollary~\ref{cor:mfsize} false, since $\smash{\widetilde{A_6}}$, a $7$-cycle of
edges labelled $3$, would then count as a Lann\'er diagram of order $7$.

\begin{proposition}[Vinberg; see {\cite[Remark~4.3]{Burcroff2024}}]
\label{prop:vinberg}
If $\Sigma=\Sigma(P)$ is the Coxeter diagram of a compact hyperbolic Coxeter
$d$-polytope $P$ with $n$ facets, then $\Sigma$ is connected, contains no
parabolic subdiagram and no bold edge, is not superhyperbolic, and $\Gr(\Sigma)$
has signature $(d,1)$ and rank $d+1$; equivalently, its kernel has dimension
$n-d-1$.
\end{proposition}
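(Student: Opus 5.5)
The statement collects standard facts from Vinberg's theory, so the plan is to derive each clause from the structure of the Gram matrix and from the local structure of $P$ at its vertices, following \cite{Vinberg1985}.

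\emph{Signature and rank.} The normals $e_i$ lie in $\RR^{d,1}$. Since $P$ is bounded with nonempty interior, the $e_i$ span $\RR^{d,1}$: if they lay in a proper subspace, its orthogonal complement would give a direction along which $P$ is unbounded or degenerate. Hence $\Gr$ is the Gram matrix of a spanning family in a space of signature $(d,1)$. So $\Gr$ has signature $(d,1)$ and rank $d+1$, and its kernel, the space of linear relations among the $e_i$, has dimension $n-d-1$. Superhyperbolicity is excluded at once, because every principal submatrix is the Gram matrix of vectors in $\RR^{d,1}$, so its negative inertia index is at most $1$.

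\emph{Local conditions.} For every vertex $v$ of $P$ (and, since $P$ is compact, $P$ is the convex hull of its vertices), the facets through $v$ have normals spanning a positive definite subspace $v^\perp$, and their mutual angles are $\pi/m$. So the subdiagram on any set of facets meeting at a face is elliptic. A parabolic subdiagram $S$ would correspond, by Vinberg's criterion, to a point at infinity in the closure of $P$: the facets of $S$ would meet at an ideal vertex or at a face touching the boundary. This contradicts compactness. The way I would make this precise is as follows. A connected parabolic principal submatrix has a positive kernel vector $c$, so $w=\sum_{i\in S}c_ie_i$ is isotropic and nonzero. The vector $w$ then pairs nonpositively with every $e_j$, because off-diagonal entries are $\le 0$ for $j\notin S$, and it has zero pairing within $S$. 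So $w$ determines a point of $\partial\HH^d$ lying in $\bigcap f_i^-$, which contradicts boundedness. Bold edges ($\Gr_{ij}=-1$, parallel facets) are the order-$2$ parabolic case $\widetilde{A_1}$, and they are excluded by the same argument.

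\emph{Connectedness.} If $\Sigma$ split into two components, $\Gr$ would be block diagonal, $\Gr=\Gr_1\oplus\Gr_2$. Exactly one block carries the negative direction, so the other block is positive semidefinite, and its vectors span a positive semidefinite subspace orthogonal to a subspace containing a timelike vector. The polytope would then be a product-like cone that is unbounded in the directions of the first block, contradicting compactness. I expect this step, together with the parabolic exclusion, to be the main obstacle, since it is where compactness must be converted carefully into an algebraic statement. I would simply cite \cite[Remark~4.3]{Burcroff2024} and Vinberg's Propositions for it rather than reprove it in full.
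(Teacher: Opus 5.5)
Your proposal is correct in substance, but it takes a different route from the paper. The paper gives no argument for this proposition and simply quotes it from Vinberg's theory via \cite[Remark~4.3]{Burcroff2024}; you reconstruct the standard proofs instead. First, the normals span $\RR^{d,1}$, so $\Gr$ is the pull-back of the Lorentzian form along a surjection $\RR^n\to\RR^{d,1}$, which gives signature $(d,1)$, rank $d+1$ and kernel dimension $n-d-1$ at once. Second, every principal submatrix is a Gram matrix of vectors in $\RR^{d,1}$, so none is superhyperbolic. Third, Perron--Frobenius applied to a connected parabolic component produces an isotropic vector in the closure of $P$. Every positive semidefinite degenerate subdiagram has such a component, and $\widetilde{A_1}$ covers bold edges. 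The citation buys brevity. Your version buys a self-contained proof, and it shows that compactness enters only through two facts: $P$ has interior, and $\overline{P}$ contains no ideal point.

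Two steps need one more line, and the same device supplies both. Fix an interior point, i.e.\ a future timelike $x$ with $\langle x,e_i\rangle<0$ for all $i$.

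\emph{Parabolic step.} The inequality $\langle x,w\rangle=\sum_{i\in S}c_i\langle x,e_i\rangle<0$ is what shows $w\neq0$; a~priori $\sum c_ie_i=0$ is possible. It also shows that $w$ is future-pointing; without that, the conditions $\langle w,e_j\rangle\le0$ do not place $[w]$ in $\overline{P}$. The ray $x+tw$, $t\ge0$, then stays in $P$ and tends to $[w]$.

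\emph{Connectedness step.} The unbounded direction comes from the \emph{positive definite} block, not the Lorentzian one, and you need not fall back on the citation. Let $V_1,V_2$ be the spans of the two blocks. Then $\RR^{d,1}=V_1\oplus V_2$ orthogonally, and we label them so that $V_2$ is positive definite. Write $x=x_1+x_2$ accordingly. Since $\langle x_2,e_j\rangle=\langle x,e_j\rangle<0$ on the second block, $x_2\neq0$. For $t>0$ the vector $x_1+tx_2$ satisfies every defining inequality. Moreover $\langle x_1+tx_2,x_1+tx_2\rangle=\langle x_1,x_1\rangle+t^2\langle x_2,x_2\rangle$ is negative at $t=1$ and vanishes at some finite $t_0>1$. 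Hence $P$ reaches $\partial\HH^d$, by the same mechanism as in the parabolic step.
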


For $d=6$, $n=10$ this reads: $\Gr$ is a $10\times10$ symmetric matrix of
signature $(6,1)$, rank $7$, kernel dimension $3$.

\subsection{Faces and missing faces}
A subset $J\subseteq I$ is a \emph{face} of $P$ if $\bigcap_{i\in J}f_i\neq
\emptyset$; a \emph{missing face} is a minimal non-face.

\begin{proposition}[{Vinberg; \cite[Prop.~4.4]{Burcroff2024}}]
\label{prop:faces}
Faces of codimension $m$ in $P$ correspond to elliptic subdiagrams of order $m$
in $\Sigma(P)$, and the missing faces of $P$ correspond to the Lann\'er
subdiagrams of $\Sigma(P)$.
\end{proposition}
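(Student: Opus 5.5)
This is Vinberg's correspondence: faces of codimension $m$ match elliptic subdiagrams of order $m$, and missing faces match Lann\'er subdiagrams. The plan is to prove the first correspondence first and then deduce the second from it using compactness. Throughout we work in $\RR^{d,1}$ with the outward unit normals $e_i$ and the Gram matrix $\Gr$.

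\emph{Faces to elliptic subdiagrams.} Let $J$ be a face, so $F=\bigcap_{j\in J}f_j$ contains a point $p\in\HH^d$. Each $e_j$ with $j\in J$ lies in $p^\perp$, which is a spacelike, positive definite hyperplane. Hence the Gram matrix $\Gr_J$ is positive semidefinite. To get definiteness, note that the facets through $p$ are pairwise at angles $\pi/m_{ij}$, so locally near $p$ the polytope is a fundamental chamber of the finite reflection group generated by $\{r_j\}_{j\in J}$ acting on $p^\perp$. A chamber of a finite reflection group has linearly independent normals, so $\Gr_J$ is positive definite. Finally, $\operatorname{codim}F=|J|$: the hyperplanes are independent, and only the facets of $J$ contain $F$ near a generic point, because $P$ is simple (compact case). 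The latter claim needs care, and I would arrange that the face $F$ is determined by $J$ as the intersection.

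\emph{Elliptic subdiagrams to faces.} Let $\Gr_J$ be positive definite. Then $\operatorname{span}\{e_j\}_{j\in J}$ is spacelike, so its orthogonal complement $W$ is Lorentzian and meets $\HH^d$ in a plane $H_J$ of codimension $|J|$. We must show that $H_J\cap P\neq\emptyset$. I would follow Vinberg: restrict to a nearest-point argument. Take the point $q$ of $P$ minimizing the distance to $H_J$, which exists because $P$ is compact. If the distance is positive, the geodesic from $q$ to $H_J$ is blocked only by facets through $q$. Using the acute-angle condition ($\Gr_{ij}\le 0$) together with positive definiteness, which forces the $e_j$ to form a basis of a cone with nonnegative dual, one derives a contradiction. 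This is the main obstacle, and I would rely on Vinberg's Theorem 3.1 (faces of acute-angled polytopes correspond to subsets with $\Gr_J$ positive definite) rather than reprove it. Acute-angledness holds because every entry $\Gr_{ij}\le0$.

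\emph{Missing faces to Lann\'er subdiagrams.} Let $J$ be a missing face. Every proper subset of $J$ is a face, hence elliptic by the first correspondence, while $J$ itself is not elliptic. We then need connectivity and $\det\Gr_J<0$. If $J$ split as $J_1\sqcup J_2$ with the two parts orthogonal, each part would be elliptic, so $\Gr_J$ would be positive definite and $J$ would be a face, a contradiction. So $J$ is connected. Moreover $\Gr_J$ is not parabolic, since Proposition~\ref{prop:vinberg} rules out parabolic subdiagrams. A matrix whose principal minors of size $|J|-1$ are all positive definite and which is neither definite nor degenerate has exactly one negative eigenvalue, so $\det\Gr_J<0$. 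Conversely, a Lann\'er $J$ is not elliptic and is therefore a non-face, while all its proper subsets are elliptic and hence faces, so $J$ is minimal.
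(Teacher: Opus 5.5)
Your argument is correct in substance, and on its one non-routine step it rests on the same thing the paper does. The paper gives no proof here: it cites the whole statement to Vinberg (via \cite[Prop.~4.4]{Burcroff2024}). You instead prove face $\Rightarrow$ elliptic by hand, take elliptic $\Rightarrow$ face from Vinberg's theorem on acute-angled polytopes, and derive the Lann\'er half. That derivation is worth having, because it shows where compactness is used. A minimal non-face is connected and non-elliptic, with all proper subdiagrams elliptic. It is the no-parabolic clause of Proposition~\ref{prop:vinberg} that turns this into $\det G_J<0$. That is the same clause the paper's definition of Lann\'er is built to respect, as its remark on $\widetilde{A_6}$ shows.

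One step needs repair. For $\operatorname{codim}F=|J|$ you appeal to simplicity of $P$. But the paper obtains Proposition~\ref{prop:simple} \emph{from} this proposition, so that appeal is circular here. It is also unnecessary.

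Take $p$ in the relative interior of $F=\bigcap_{j\in J}f_j$, and let $J_p\supseteq J$ be the set of \emph{all} facets through $p$. Your chamber argument must be run for $J_p$, not $J$, since the local group at $p$ is generated by all of them. Alternatively, take the tangent vector $u\in p^\perp$ pointing to an interior point, so that $\langle u,e_j\rangle<0$ for $j\in J_p$. Vectors in a Euclidean space with pairwise non-positive inner products that lie in an open half-space are linearly independent, and this avoids the fundamental-domain machinery.

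Independence of $\{e_j\}_{j\in J_p}$ gives a tangent vector $v$ at $p$ with $\langle v,e_j\rangle=0$ for $j\in J$ and $\langle v,e_k\rangle<0$ for $k\in J_p\setminus J$. Hence $F$ has dimension $d-|J|$ near $p$. On the other hand, every facet through a relative-interior point of $F$ contains $F$. So $F$ lies in the intersection of the hyperplanes of the facets in $J_p$, which has dimension $d-|J_p|$.

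Therefore $J_p=J$. The face has codimension $|J|$ and lies in no facet outside $J$. This is what makes the correspondence a bijection, and it yields simplicity as a corollary rather than an input.
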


\begin{proposition}[{Vinberg; \cite[Remark~4.5]{Burcroff2024}}]
\label{prop:simple}
Every compact hyperbolic Coxeter $d$-polytope is simple: every $(d-j)$-face is
contained in precisely $j$ facets. In particular every vertex lies on exactly
$d$ facets.
\end{proposition}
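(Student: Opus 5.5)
The plan is to derive simplicity from Proposition~\ref{prop:faces} together with the fact that elliptic diagrams of order $m$ have Gram matrices of rank $m$. Let $F$ be a nonempty $(d-j)$-face of the compact polytope $P$, and let $J\subseteq I$ be the set of facets containing $F$. Since $P$ is compact, $F$ contains a vertex $v$. Every facet through $v$ meets the others there, so the set $J_v$ of facets through $v$ is a face. By Proposition~\ref{prop:faces}, $J_v$ is then an elliptic subdiagram, hence its Gram matrix is positive definite. Positive definiteness forces the normals $\{e_i\}_{i\in J_v}$ to be linearly independent, so $|J_v|\le d+1$.

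The next step is to pin $|J_v|$ to exactly $d$. The normals $e_i$, $i\in J_v$, all lie in $v^\perp$, because each hyperplane $f_i$ passes through $v$. Here $v$ is a timelike vector in $\RR^{d,1}$, so $v^\perp$ is a $d$-dimensional positive definite subspace. Linear independence inside $v^\perp$ gives $|J_v|\le d$. Conversely, $v$ is a vertex, i.e.\ a $0$-dimensional face, so the hyperplanes through $v$ cut out the point $\{v\}$ locally; this requires their normals to span $v^\perp$, giving $|J_v|\ge d$. Hence every vertex lies on exactly $d$ facets.

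For a general face, the same argument applies to $J=J_F$: its normals lie in $w^\perp$ for any interior point $w$ of $F$, and they are independent because $J\subseteq J_v$ inherits ellipticity (a principal submatrix of a positive definite matrix is positive definite). The affine span of $F$ is the intersection of the hyperplanes $f_i$, $i\in J$, in $\HH^d$. Independence of the normals gives that this intersection has dimension exactly $d-|J|$. Since $F$ has dimension $d-j$ and spans that intersection (the facets not in $J$ do not contain $F$, so near a relative interior point of $F$ only the facets in $J$ are active), we get $|J|=j$.

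The main subtlety, and the step I expect to need care, is the correspondence between "the set of facets containing $F$" and the face subset in Proposition~\ref{prop:faces}. I will handle it by working at a relative interior point, where the local cone of $P$ is $\bigcap_{i\in J}f_i^-$, and by using compactness only to guarantee that vertices exist and are nondegenerate points in $\HH^d$ rather than ideal points, where $v^\perp$ would be degenerate.
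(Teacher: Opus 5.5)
Your proof is correct, but it takes a longer route than the paper, whose proof is a one-line read-off from Proposition~\ref{prop:faces}. The paper reads that proposition as already attaching to each face of codimension $m$ an elliptic subdiagram of \emph{order} $m$. The facets through a vertex therefore form an elliptic subdiagram of order $d$, so there are exactly $d$ of them; the count is built into the cited correspondence.

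You use only the weaker half of the proposition: the set $J_v$ of facets through a vertex is a face in the paper's sense, so its Gram matrix is positive definite. You then recover the count yourself:
\begin{itemize}
\item independence of the normals inside the $d$-dimensional positive definite space $v^\perp$ gives $|J_v|\le d$;
\item isolation of $\{v\}=P\cap\bigcap_{i\in J_v}f_i$ gives $|J_v|\ge d$;
\item for a general face, the local argument at a relative interior point gives $\dim F=d-|J_F|$. This step implicitly uses the standard fact that a facet meeting the relative interior of $F$ contains $F$.
\end{itemize}

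What your version buys is transparency about the inputs. It shows that the codimension-equals-order part of Proposition~\ref{prop:faces} follows from positive definiteness at finite vertices plus linear algebra. It also pins down exactly where compactness enters: a timelike $v$ makes $v^\perp$ positive definite, whereas at an ideal vertex $v^\perp$ is degenerate and more than $d$ facets can meet. The paper's version buys brevity, at the price of citing a statement that already contains simplicity.
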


This is immediate from Proposition~\ref{prop:faces}, since an elliptic
subdiagram of order $d$ has exactly $d$ nodes.

Since Lann\'er diagrams exist only in orders $2\le \ell\le 5$
\cite{Lanner1950}, Proposition~\ref{prop:faces} gives at once:

\begin{corollary}
\label{cor:mfsize}
Every missing face of a compact hyperbolic Coxeter polytope has size between $2$
and $5$. Missing faces of size $2$ are exactly the pairs of disjoint facets,
i.e.\ the dashed edges of $\Sigma(P)$.
\end{corollary}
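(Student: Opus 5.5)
The corollary follows from Proposition~\ref{prop:faces} together with Lann\'er's classification. I will argue the two assertions separately.

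\emph{Size bound.} Let $J$ be a missing face of $P$. By Proposition~\ref{prop:faces}, the subdiagram $\Sigma_J$ is Lann\'er, and Lann\'er diagrams exist only in orders $2\le\ell\le5$ \cite{Lanner1950}. So $2\le|J|\le5$.

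One point needs care: the lower bound $|J|\ge2$ must not depend on Lann\'er's list being stated with the right convention. I will check it directly.
\begin{itemize}
\item The empty set is a face, since $\bigcap_{i\in\emptyset}f_i$ is the whole polytope, which is nonempty.
\item Every singleton $\{i\}$ is a face, because $f_i$ is a facet and hence nonempty.
\end{itemize}
A minimal non-face therefore has at least two elements. The upper bound is where Lann\'er's theorem is genuinely used. The definition of ``Lann\'er'' adopted above matters here: it requires $\det\Gr<0$, so affine diagrams such as $\smash{\widetilde{A_6}}$ are excluded and cannot give a spurious missing face of size $7$.

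\emph{Size two.} The goal is that a pair $\{i,j\}$ is a missing face exactly when $f_i\cap f_j=\emptyset$, and that this holds exactly when $i,j$ are joined by a dashed edge.
\begin{itemize}
\item Both singletons are faces, so $\{i,j\}$ is a minimal non-face exactly when it is a non-face, that is, when $f_i\cap f_j=\emptyset$.
\item By the Gram-matrix trichotomy, two facets of $P$ either meet at some angle $\pi/m_{ij}$, or are parallel, or diverge.
\item Parallel facets give $\Gr_{ij}=-1$. The $2\times2$ subdiagram is then parabolic, which Proposition~\ref{prop:vinberg} forbids for compact $P$.
\end{itemize}
So non-intersecting facets diverge, and in the diagram convention these are exactly the dashed edges.

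Consistency check: an order-$2$ Lann\'er diagram has $\det\begin{pmatrix}1&g\\ g&1\end{pmatrix}=1-g^2<0$, so $|g|>1$, which is the dashed case. Both elements of the statement then follow.

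The only step needing attention is ruling out parallel facets. This is handled by the no-parabolic-subdiagram clause of Proposition~\ref{prop:vinberg}.
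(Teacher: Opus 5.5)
Your proof is correct and follows the paper's route: the paper obtains the corollary in one line from Proposition~\ref{prop:faces} together with Lann\'er's bound $2\le\ell\le5$. Your extra checks make explicit the size-$2$ identification that the paper leaves implicit: singletons are faces, parallel facets are excluded by the no-parabolic clause of Proposition~\ref{prop:vinberg}, and an order-$2$ Lann\'er diagram has $1-g^2<0$, forcing $|g|>1$. Of these, the order-$2$ determinant argument is the most self-contained justification of the dashed-edge clause, because it runs through Proposition~\ref{prop:faces} directly rather than through the trichotomy of facet positions.
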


We write $p=p(P)$ for the number of pairs of disjoint facets.

\section{Combinatorial types via order types and Gale diagrams}
\label{sec:types}

\subsection{The order-type route}
By Proposition~\ref{prop:simple} a compact Coxeter $6$-polytope with $10$ facets
is a simple $6$-polytope with $10$ facets; dually, a simplicial $6$-polytope
with $10$ vertices. Its Gale diagram is a configuration of $10$ points in
$\RR^{3}$, which can be reduced to a planar \emph{affine} Gale diagram: a
configuration of $10$ labelled points in $\RR^2$, each marked positive or
negative \cite{Gale1956,Ziegler1995}. The relevant statement is:

\begin{theorem}[{\cite[Thm.~3.5]{Burcroff2024}}]
\label{thm:burcroff35}
Every compact Coxeter $d$-polytope $P$ with $d+4$ facets admits an affine Gale
diagram obtained by taking an arrangement of $d+4$ points $A\subseteq\RR^2$ in
general position and choosing two points from the interior of $\conv(A)$ to be
positive. Moreover, the combinatorial type of $P$ is completely determined by
the order type of $A$.
\end{theorem}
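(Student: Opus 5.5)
The plan is to prove Theorem~\ref{thm:burcroff35} by Gale duality applied to the simple polytope $P$. Let $P$ be a compact Coxeter $d$-polytope with $n=d+4$ facets. By Proposition~\ref{prop:simple}, $P$ is simple, so its polar $P^*$ is a simplicial $d$-polytope with $d+4$ vertices. Its Gale transform is a configuration $\overline{V}=\{\bar v_1,\dots,\bar v_{d+4}\}\subset\RR^{3}$, and we may assume these vectors are in general position, since simpliciality corresponds to genericity of the cocircuits that matter.

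For the reduction to the plane, we choose a generic linear functional $\lambda$ that is nonzero on every $\bar v_i$. Each $\bar v_i$ is rescaled to lie on the affine plane $\{\lambda=1\}$, and its sign is recorded as $\sig\lambda(\bar v_i)$. This gives the standard affine Gale diagram: $d+4$ signed points in $\RR^2$. The faces of $P^*$ (equivalently the faces of $P$) are read off as follows. A set $S$ of vertices is a coface exactly when $0$ lies in the relative interior of $\conv\{\bar v_i: i\notin S\}$. In affine language, this says the positive points and the negative points of the complement have intersecting relative-interior convex hulls.

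The key step is to use the Coxeter structure to choose $\lambda$ so that exactly two points are positive, both lying inside the convex hull of all $d+4$ points. I would argue through missing faces. By Corollary~\ref{cor:mfsize}, every missing face of $P$ has size between $2$ and $5$. The Gale dual of "small missing face" is that every open half-space through the origin contains many $\bar v_i$. Concretely, a missing face of size $\le 5$ corresponds to a complement of size $\ge d-1$ that is a non-coface. Dually, the configuration is "$\le 4$-balanced": every open hemisphere of $S^2$ contains at least a fixed number of the directions $\bar v_i$.

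From this balance I would select $\lambda$ as a direction whose open hemisphere $\{\lambda>0\}$ meets the configuration in as few points as possible. I would try to show that this minimum is exactly $2$. The lower bound comes from the fact that facets of $P^*$ are simplices of size $d$, so their complements have size $4$ and contain $0$ in their convex hull. The upper bound would follow by rotating a great circle until it passes through two of the vectors.

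Once exactly two points are positive, both must lie in the interior of $\conv(A)$. Otherwise $P$ would fail to be a polytope, or a vertex would lie in a missing face of size $>5$. Here I again invoke Corollary~\ref{cor:mfsize}, together with the fact that the number of facets is exactly $d+4$ (no redundant facets).

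The final claim, that the combinatorial type depends only on the order type of $A$, is immediate once the signs are fixed. The face criterion above involves only which triples are oriented positively, and whether a positive point lies in a given triangle or segment of negative points, up to the reflection swapping the two positive points. All of these are orientation predicates of $A$.

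The main obstacle I expect is the hemisphere-minimization step: guaranteeing a $\lambda$ with exactly two positive points. This needs a careful count that uses the size bound on missing faces and the general-position assumption. I would first try a Ham-Sandwich or centerpoint-type argument on $S^2$, and fall back on a direct rotation argument if that fails.
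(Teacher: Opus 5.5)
There is a genuine gap, and it is at the step you yourself flag: producing $\lambda$ with exactly two positive points. With the Gale vectors in general position, a set $S$ of facets is a non-face precisely when some open half-space through the origin contains only vectors $\bar v_k$ with $k\in S$. Hence the least number of Gale vectors in an open half-space equals the least size of a missing face of $P$. So a functional $\lambda$ with exactly two positive points exists \emph{if and only if} $P$ has a pair of disjoint facets.

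Corollary~\ref{cor:mfsize} cannot supply that pair. Its upper bound $5$ plays no role here. Its lower bound $2$ only says that every open half-space contains at least two vectors; that, and not the size of the cofacets, is the real source of your lower bound. Concretely, the polar of the cyclic polytope $C_6(10)$ is a simple $6$-polytope with $10$ facets, any three of which meet. All of its missing faces have size $4$, so it satisfies every constraint you invoke. Yet every open half-space of its Gale diagram contains at least four vectors. No rotation of a great circle can produce a two-point hemisphere. Ham-sandwich and centerpoint arguments bound hemisphere depth from below, which is the wrong direction.

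The missing input is Coxeter-geometric. A compact Coxeter $d$-polytope with $d+4$ facets has two disjoint facets $f_i,f_j$; this is Felikson--Tumarkin, recorded here as Lemma~\ref{lem:p2}, which even gives $p\ge2$. Given such a pair, the non-face condition gives $0\notin\mathrm{relint}\conv\{\bar v_k:k\neq i,j\}$. General position upgrades this to $0\notin\conv\{\bar v_k:k\neq i,j\}$, so some $\lambda$ is negative on every $\bar v_k$ with $k\neq i,j$. That $\lambda$ is then forced to be positive on both $\bar v_i$ and $\bar v_j$, because $\{j\}$ and $\{i\}$ are faces.

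Your justification of the interior claim also uses the wrong fact: no missing face of size larger than $5$ is involved. Because $\{i\}$ is a face, $0\in\mathrm{relint}\conv\{\bar v_k:k\neq i\}$. In the affine picture this says the other positive point lies in the interior of the convex hull of the $d+2$ negative points, hence in the interior of $\conv(A)$; the same argument with $\{j\}$ handles the first positive point.

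Note also that the type is determined by the order type of $A$ \emph{together with} the labelled positive pair, which is how the paper applies the theorem. The remaining parts of your outline are fine: simplicity, the Gale and affine Gale reduction, and reading the face criterion through orientation predicates.
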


Thus enumerating combinatorial types reduces to enumerating order types of $10$
points, together with the $\binom{10}{2}$ choices of positive pair.

\subsection{Completeness of the order-type database}
\label{sec:aak}

The \emph{order type} of a planar point set in general position is its
chirotope, i.e.\ the orientation of every ordered triple. Order types realizable
by actual point sets have been completely enumerated for $n\le 11$ by
Aichholzer, Aurenhammer and Krasser
\cite{AichholzerAurenhammerKrasser2002,AichholzerKrasser2001}; the resulting
Order Type Data Base \cite{OrderTypeDB} stores one integer-coordinate realizing
point set per order type. For $n=10$ it contains $\NumOrderTypes$ order types.
This is the count quoted by Burcroff \cite[\S3]{Burcroff2024} and is the
database she uses for $d=4,5$; we use the same files
(\texttt{otypes10.b16}). The enumeration is exhaustive for realizable order
types by construction: it proceeds by inserting points one at a time into all
realizable order types on $n-1$ points and keeping one representative per
equivalence class, and every stored representative comes with explicit integer
coordinates, so every entry is realizable and no realizable order type is
omitted. Theorem~\ref{thm:burcroff35} produces the affine Gale diagram of an
actual polytope, whose underlying point configuration is realizable; hence the
database covers every combinatorial type we must consider.

\subsection{From order types to combinatorial types}
For each order type and each candidate positive pair $\{u,v\}$ we test, using
exact integer arithmetic on the stored coordinates, that both $u$ and $v$ lie in
the interior of the convex hull of the remaining eight points, and then read off
the face lattice by the standard affine Gale criterion \cite[\S6.4]{Ziegler1995}:
$S\subseteq[n]$ is a face if and only if, in $T=[n]\setminus S$, the positive
points of $T$ and the negative points of $T$ are ``interleaved'' in the sense
that the relative interiors of $\conv(T^+)$ and $\conv(T^-)$ intersect. This
criterion is implemented over the rationals; no floating-point comparison enters the
determination of a face. Types are then deduplicated exactly: two types are
identified when a permutation of $[10]$ carries one missing-face hypergraph to
the other, tested by colour refinement followed by an exhaustive search over
colour-preserving permutations. This is legitimate as a test of combinatorial
equivalence because a simplicial complex is determined by its minimal non-faces:
$S$ is a face if and only if $S$ contains no missing face, so the missing-face
hypergraph determines the whole face lattice, and hence the combinatorial type of
the dual simple polytope.

That argument uses the \emph{full} hypergraph, and the generator records minimal
non-faces only up to size $5$ (\S\ref{sec:completeness}), so deduplication is
carried out on truncated data and we owe an argument that nothing was lost. Two
distinct types with isomorphic $\le5$ hypergraphs, differing only in a minimal
non-face of size $6$, would be merged; that would matter if the surviving
representative were then discarded by Lemma~\ref{lem:mfsix}, which excludes types
possessing a minimal non-face of size $6$, since a merged partner without one
would be discarded with it. It does not happen. For each of the $\NumBigMF$ types
that Lemma~\ref{lem:mfsix} excludes, we recover every source candidate that was
merged into it --- $2{,}124$ of them in total --- and verify that each one
\emph{also} has a minimal non-face of size $6$. So every candidate the lemma
removes deserves removal on its own data, and no type is lost to the truncation.

Two properties of the input are checked rather than assumed. The stored
coordinates are verified to be in general position---no three of the ten points
collinear, i.e.\ no vanishing triple determinant---using exact integer arithmetic;
and for every generated type the vertex list is compared against the $6$-subsets of
facets containing no \emph{recorded} missing face. Because minimal non-faces are
recorded only up to size $5$ (see \S\ref{sec:completeness}), the two need not
coincide: a $6$-subset avoiding every recorded missing face may fail to be a vertex,
which is exactly the signature of a minimal non-face of size $6$ and is how
Lemma~\ref{lem:mfsix} is applied. What the comparison does verify unconditionally is
the other direction---every vertex has size $6$ and contains no missing face---which
is the consistency of the two representations.

\begin{proposition}
\label{prop:387}
This procedure yields exactly $\NumTypes$ combinatorial types of simple
$6$-polytopes with $10$ facets and at least two pairs of disjoint facets.
\end{proposition}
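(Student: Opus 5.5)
This proposition is a computational statement. Its proof has two parts. The first is an argument that the procedure is correct and complete. The second is a report of the count, checked by reproducible computation.

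\emph{Completeness and soundness.} Completeness runs as follows. By Proposition~\ref{prop:simple}, any compact Coxeter $6$-polytope with $10$ facets is simple. Theorem~\ref{thm:burcroff35} then shows that its combinatorial type arises from some realizable order type on $10$ points with two interior positive points. The discussion of \S\ref{sec:aak} shows that every such order type appears in \texttt{otypes10.b16}. So looping over all $\NumOrderTypes$ order types and all $\binom{10}{2}$ pairs $\{u,v\}$ misses no type.

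Soundness runs as follows. Each retained pair passes the exact interiority test. Each face is read off by the rational affine Gale criterion \cite[\S6.4]{Ziegler1995}, so every output is the boundary complex of a genuine simplicial $6$-polytope with $10$ vertices, dual to a simple $6$-polytope with $10$ facets. The general-position check on the stored coordinates guarantees the criterion is applied in its valid regime.

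The restriction ``at least two pairs of disjoint facets'' is then imposed. I would justify it by citing that a compact Coxeter polytope with $d+4$ facets has $p\ge 2$ dashed edges, which I expect the paper to make precise; the filter itself is just a count of missing faces of size $2$.

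\emph{Deduplication.} Types are identified by isomorphism of missing-face hypergraphs. Since a simplicial complex is determined by its minimal non-faces, full-hypergraph isomorphism is exactly combinatorial equivalence. The test itself is exact: colour refinement only prunes, and the exhaustive search over colour-preserving permutations decides isomorphism. The truncation to sizes $\le 5$ could, in principle, merge distinct types. For the purposes of this proposition, I would handle this with the verification already described: every one of the $2{,}124$ candidates merged into the $\NumBigMF$ types later removed by Lemma~\ref{lem:mfsix} has its own size-$6$ minimal non-face. Strictly, the count $\NumTypes$ then counts classes under truncated-hypergraph equivalence. That count is all that downstream filtering needs, as long as no merged class is split by a later lemma.

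\emph{The count itself.} The main obstacle is the final assertion that the number is exactly $\NumTypes$. This cannot be proved by hand. It rests on running the pipeline and on its reproducibility. I would support it with three checks:
\begin{enumerate}
\item the vertex-list consistency check described above;
\item reproduction of Burcroff's counts for $d=4,5$ by the same code path on the $8$- and $9$-point databases, as a regression test;
\item publication of the $\NumTypes$ canonical hypergraphs so the count can be independently recomputed.
\end{enumerate}
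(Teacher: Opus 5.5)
Your outline follows the paper's own justification almost step for step: Theorem~\ref{thm:burcroff35} with completeness of the database, the exact interiority test and rational Gale criterion, exact deduplication with the $2{,}124$-candidate truncation check, the vertex-list comparison, Lemma~\ref{lem:p2}, and the $n=8,9$ regressions. However, one inference in your soundness paragraph is false.

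Interiority of $u,v$ is only the \emph{necessary} condition supplied by Theorem~\ref{thm:burcroff35}. It makes $u$ and $v$ vertices of the dual simplicial polytope, but says nothing about the negative points. By the criterion, a negative point $w$ is a vertex exactly when the open segment $uv$ meets the interior of the hull of the other seven negative points. This fails whenever a line separates $\{u,v,w\}$ from those seven, and that is compatible with $u,v$ being interior: place $w$ far from the other seven, and $u,v$ in the thin wedge of the hull near $w$. The criterion then returns a complex with a missing face $\{w\}$ of size $1$, i.e.\ at most nine facets, and none of the checks you list flags it. So ``every output is the boundary complex of a simplicial $6$-polytope with $10$ vertices'' needs the extra verification that all ten singletons are faces. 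In the paper this is the recorded fact that all $387$ types have missing faces only of sizes $2$ through $5$.

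Two smaller points. First, the regression targets are the lists of Gr\"unbaum--Sreedharan and Ma--Zheng ($30$ and $109$ types). Burcroff's appendix tables are not a clean target: her $d=5$ table has $111$ rows for $109$ types and needs the three corrections of Remark~\ref{rem:appendixA}.

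Second, your caveat that $387$ strictly counts truncated-hypergraph classes is legitimate. The $2{,}124$-candidate check shows that nothing is lost, not that each class is a single type. But the caveat can be removed. After relabelling, two simplicial $5$-spheres on $[10]$ in one class have the same missing faces of size $\le5$. Hence they have the same faces of size $\le5$ and the same ridges, and both facet sets lie in the set $A$ of $6$-subsets avoiding those missing faces. If one sphere has no missing face of size $6$, its facet set is all of $A$. Then each ridge lies in exactly two members of $A$, and these must also be its two facets in the other sphere, so the spheres coincide. Hence only classes whose representative has a size-$6$ missing face can merge distinct types; this is also why the $2{,}124$ check had to succeed. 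Re-deduplicating just those classes on their full vertex lists makes the count literal.
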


The number $p$ is distributed over the $\NumTypes$ types as
$p=2$: 245, $p=3$: 101, $p=4$: 28, $p=5$: 10, $p=6$: 3.

\begin{remark}
Burcroff established the order-type route in all dimensions and carried out the
enumeration for $d=4$ and $d=5$, and in dimension $6$ analysed the case in which
the missing faces have sizes in $\{2,5\}$ \cite[\S3, \S8]{Burcroff2024}. Of our
$\NumTypes$ types, $\NumTypesMF$ contain a missing face of size $3$ or $4$.
\end{remark}

\subsection{Validation of the generator}
\label{sec:genvalid}

Run on $n=8$ and $n=9$, the same generator reproduces the published combinatorial
censuses exactly: $30$ types for $d=4$ and $109$ for $d=5$, matching the
ground-truth lists up to relabelling, with no spurious types. Both checks are
exact, by permutation search on missing-face hypergraphs. These are the
lists of \cite{GruenbaumSreedharan1967} and \cite{MaZheng2023} respectively.

\begin{remark}[three corrections to {\cite[Appendix~A]{Burcroff2024}}]
\label{rem:appendixA}
The second table of that appendix, the $d=5$ candidate list, has $111$ rows, which
describe $109$ combinatorial types. Two rows carry identical missing-face lists;
two further rows are not the missing-face systems of simple polytopes; and one type
present in our output and in \cite{MaZheng2023} has no row. After those three
corrections her list and ours agree exactly. None of them affects her
classification, and each is documented with its data in \cite{Repo}. (The first
table of the same appendix is the $34$-row $d=4$ list used in
\S\ref{sec:d4valid}.)
\end{remark}

\section{Restrictions on combinatorial types and Coxeter diagrams}
\label{sec:filters}
\label{sec:completeness}

This section justifies every reduction applied between the $\NumTypes$ types of
Proposition~\ref{prop:387} and the exhaustive search of \S\ref{sec:search}. We
state each as a lemma, with a proof or a precise citation. A reduction is
\emph{sound} if it never discards a type that admits a compact Coxeter
structure.

The first condition is standard and is used throughout \cite{Burcroff2024}; we
record the derivation only because it draws on two separate results and we know
of no source stating it in this combined form.

\begin{lemma}[at least two disjoint pairs]
\label{lem:p2}
A compact hyperbolic Coxeter $d$-polytope with $d+4$ facets has $p\ge 2$.
\end{lemma}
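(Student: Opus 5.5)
The plan is to combine two known results, as the remark before the lemma suggests. The first is the Felikson--Tumarkin theorem that a compact hyperbolic Coxeter $d$-polytope with no pair of disjoint facets (i.e.\ $p=0$, so $\Sigma(P)$ has no dashed edge) has at most $d+3$ facets; this is from their paper on polytopes with few non-intersecting facets. The second is their refinement that a compact Coxeter $d$-polytope with exactly one pair of disjoint facets has at most $d+4$ facets, with equality excluded.

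If the refinement is not available in exactly that form, I would argue the case $p=1$ directly. Let $f_1,f_2$ be the unique disjoint pair. Then every other pair of facets meets, and the Gram matrix restricted to $I\setminus\{1\}$ has no entries below $-1$.

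Consider the facet $f_2$ (or $f_1$). It is a compact Coxeter $(d-1)$-polytope whose facets are the nonempty intersections $f_2\cap f_j$. By Corollary~\ref{cor:mfsize}, $f_1\cap f_2=\emptyset$, so $f_2$ has at most $d+2$ facets. The missing faces of $f_2$ come from the missing faces of $P$ containing $2$; in particular $f_2$ has no disjoint pair, since a disjoint pair $f_2\cap f_j$, $f_2\cap f_k$ would force the triple $\{2,j,k\}$ to be a non-face while $\{j,k\}$ is a face.

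The intended step is then to invoke the $p=0$ bound in a form strong enough to pin down the facet count, and to use the classifications with $d+2$ and $d+3$ facets (Kaplinskaja--Esselmann, Esselmann--Tumarkin) to rule out the configurations that remain. Concretely, the cleanest route is the Felikson--Tumarkin statement: \emph{a compact Coxeter polytope with at most one pair of non-intersecting facets has at most $d+3$ facets}. Applied with $n=d+4$, this gives $p\ge 2$ immediately.

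So the proof I would write is two lines. By Corollary~\ref{cor:mfsize}, $p$ counts the dashed edges. The cited theorem (Felikson--Tumarkin for $p=0$, and its $p=1$ companion in the same work) shows that $p\le1$ forces $n\le d+3$, so $n=d+4$ forces $p\ge2$.

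The main obstacle is bibliographic and precision-related. I have to make sure the $p=1$ statement really gives $n\le d+3$ in all dimensions, including low $d$. In the worst case, the $p=1$ result only bounds the dimension, not the facet count. In that event I would fall back on the facet argument above, combined with Esselmann's $d+3$ analysis.
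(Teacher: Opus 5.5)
Your two-line argument is the paper's proof: Felikson--Tumarkin's theorem on polytopes with mutually intersecting facets handles $p=0$ (it in fact gives $n\le d+2$, since such a polytope is a simplex or one of Esselmann's seven $4$-polytopes with $6$ facets), their Main Theorem on a unique pair of non-intersecting facets gives $n\le d+3$ when $p=1$, and together these force $p\ge 2$ when $n=d+4$. Drop the fallback, though: its claim that $f_2$ has no disjoint pair does not follow, because a missing face $\{2,j,k\}$ of $P$ of size $3$ is permitted and produces exactly such a pair $f_2\cap f_j$, $f_2\cap f_k$.
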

\begin{proof}
A compact Coxeter polytope with $p=0$ is a simplex or one of the seven
Esselmann $4$-polytopes with $6$ facets
\cite[Thm.~A]{FeliksonTumarkin2008}; in either case $n\le d+2$. By the Main
Theorem of \cite{FeliksonTumarkin2009}, quoted as
\cite[Thm.~5.1]{Burcroff2024}, a compact hyperbolic Coxeter $d$-polytope with
exactly one pair of non-intersecting facets has at most $d+3$ facets. Hence
$n=d+4$ forces $p\ge2$.
\end{proof}

This condition is imposed inside the generator, which is why
Proposition~\ref{prop:387} already counts only types with $p\ge2$. It is the
only combinatorial condition applied there; the following are applied
afterwards.

By Corollary~\ref{cor:mfsize} every missing face has size in $\{2,3,4,5\}$. All
$\NumTypes$ generated types already satisfy this, so it eliminates nothing; we
recall it because it is what makes the ``size $3$ or $4$'' condition below
equivalent to a statement about $3$-free polytopes.

Two purely combinatorial exclusions come next. Both are theorems, both are cheap,
and between them they remove more than a quarter of the types that would otherwise
have to be searched.

\begin{lemma}[bounded dashed degree]
\label{lem:degree}
Let $P$ be a compact hyperbolic Coxeter $d$-polytope with $n$ facets, $d\ge6$, and
let $f_i$ be a facet disjoint from exactly $t$ others. Then $t\le n-d-2$. For
$d=6$, $n=10$ this says no facet is disjoint from more than two others: the dashed
edges of $\Sigma(P)$ form a graph of maximum degree at most $2$.
\end{lemma}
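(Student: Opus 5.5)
The plan is to look at the facet $f_i$ itself as a compact $(d-1)$-polytope. Its facets are the nonempty intersections $f_i\cap f_j$, and these are exactly the facets $f_j$ meeting $f_i$. There are $n-1-t$ of them. Since $f_i$ is a bounded $(d-1)$-polytope, it has at least $d$ facets, so $t\le n-1-d$. It remains to exclude equality $t=n-d-1$, which is the case where $f_i$ has exactly $d$ facets, i.e.\ $f_i$ is a $(d-1)$-simplex. I will derive a contradiction from this using only Proposition~\ref{prop:faces}, Proposition~\ref{prop:simple} and Corollary~\ref{cor:mfsize}.

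So suppose $f_i$ is a simplex, and let $N$ be the set of the $d$ facets meeting $f_i$. Put $S=\{i\}\cup N$, so $|S|=d+1$.

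\emph{Step 1.} Every $d$-subset of $S$ containing $i$ is a face. Indeed, every $(d-1)$-subset of $N$ cuts out a vertex of the simplex $f_i$, and that vertex is a vertex of $P$. By Proposition~\ref{prop:faces} all these subsets are therefore elliptic, and so is every subset of $S$ containing $i$ of size at most $d$.

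\emph{Step 2.} The remaining $d$-subset $N$ is also a face. If it were not, it would contain a missing face $M\subseteq N$, which is Lann\'er of size at most $5$ by Corollary~\ref{cor:mfsize}. Since $d\ge6$, we have $|M|\le d-1$, so $M\cup\{i\}$ lies inside a $d$-subset of $S$ containing $i$. That subset is a face by Step~1, so $M$ is a face, a contradiction. Hence $N$ is a face of $P$: a vertex not lying on $f_i$.

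\emph{Step 3.} Now $S$ has $d+1$ elements, so by simplicity (Proposition~\ref{prop:simple}) it is not a face. It therefore contains a missing face $M'$, with $|M'|\le5<d+1$. Then $M'$ is a proper subset of $S$, hence contained in some $d$-subset of $S$. Every such $d$-subset is a face by Steps~1 and~2. This contradicts $M'$ being a non-face, so $t\le n-d-2$.

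For $d=6$, $n=10$ the bound reads $t\le2$. Since dashed edges are exactly the disjoint pairs (Corollary~\ref{cor:mfsize}), the dashed graph has maximum degree at most $2$.

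The step I expect to need the most care is Step~2: the hypothesis $d\ge6$ is used exactly there, to guarantee $|M|+1\le d$. I would also double-check the elementary claim that the facets of $f_i$ correspond bijectively to the $f_j$ meeting $f_i$. In a simple polytope, $f_i\cap f_j$ is either empty or a ridge, so this is clear.
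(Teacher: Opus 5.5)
Your proof is correct, and it reaches the key point, that $f_i$ cannot be combinatorially a $(d-1)$-simplex, by a different route from the paper.

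The paper treats $f_i$ as a compact hyperbolic Coxeter $(d-1)$-polytope in its own right. It then quotes Lann\'er's classification of compact Coxeter \emph{simplices}, which exist only in dimensions $2,3,4$.

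You never leave $P$. A simplex facet makes every $d$-subset of $\{i\}\cup N$ containing $i$ a face. So either $N$ is a missing face of size $d$, or $\{i\}\cup N$ is a non-face of size $d+1$ all of whose proper subsets are faces. Once $d\ge6$, both are excluded by Corollary~\ref{cor:mfsize} and simplicity.

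Both arguments ultimately rest on Lann\'er's classification and both need exactly $d\ge6$. The difference is which consequence of it they use. Yours uses only the order bound on Lann\'er diagrams and the simplicity of $P$; it never needs $f_i$ to carry a Coxeter structure.

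That is a real advantage. In general a facet of a Coxeter polytope is only acute-angled. For instance, in $\HH^3$ the angle of a facet at a vertex is a side length of the spherical vertex figure, and this equals $\arccos(1/3)$ for the facet of the middle node at a vertex of type $A_3$. Acute-angled compact simplices exist in every dimension. So the paper's one-line proof is only as strong as its cited claim that facets are themselves Coxeter polytopes, whereas your argument is self-contained given Corollary~\ref{cor:mfsize}.

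The paper's version is shorter and uses the same facet-as-Coxeter-polytope step that it reuses in Lemmas~\ref{lem:prism} and~\ref{lem:facet}. Your argument, by contrast, is specific to the simplex case and does not by itself replace those.
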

\begin{proof}
$P$ is simple, so for each of the $n-1-t$ facets $f_j$ meeting $f_i$ the
intersection $f_i\cap f_j$ is a facet of $f_i$, and distinct $j$ give distinct
facets; hence $f_i$ is a compact hyperbolic Coxeter $(d-1)$-polytope with exactly
$n-1-t$ facets. A compact hyperbolic Coxeter polytope of dimension $d-1\ge5$ is not
a simplex, since by Lann\'er's classification \cite{Lanner1950} compact hyperbolic
Coxeter simplices exist only in dimensions $2$, $3$ and $4$. So $f_i$ has at least
$d+1$ facets, giving $n-1-t\ge d+1$.
\end{proof}

\begin{lemma}[no missing face of size $d$]
\label{lem:mfsix}
No compact hyperbolic Coxeter $6$-polytope has a missing face of size $6$.
\end{lemma}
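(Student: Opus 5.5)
The statement should follow directly from Proposition~\ref{prop:faces} together with Lann\'er's bound on the order of Lann\'er diagrams, essentially as in Corollary~\ref{cor:mfsize}. Since the paper uses Lemma~\ref{lem:mfsix} on its own, I would spell out the step from ``minimal non-face'' to ``Lann\'er subdiagram'' rather than quote it. The plan is to take a missing face $S$ with $|S|=6$, show that the subdiagram $\Sigma_S$ of $\Sigma(P)$ on $S$ is a Lann\'er diagram of order $6$, and contradict Lann\'er's classification \cite{Lanner1950}, under which Lann\'er diagrams have order between $2$ and $5$.

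\emph{Step 1.} Minimality of $S$ means every proper subset $J\subsetneq S$ is a face. By Proposition~\ref{prop:faces}, every proper subdiagram of $\Sigma_S$ is therefore elliptic. On the other hand $S$ is not a face, so $\Sigma_S$ itself is not elliptic.

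\emph{Step 2.} $\Sigma_S$ is connected. If it split into nonempty components $\Sigma_1\sqcup\Sigma_2$, each component would be a proper subdiagram, hence elliptic. Then $\Gr(\Sigma_S)$ would be block diagonal with positive definite blocks, so $\Sigma_S$ would be elliptic, contradicting Step~1.

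\emph{Step 3.} $\det\Gr(\Sigma_S)<0$. Deleting one node leaves a positive definite $5\times5$ principal submatrix. By Cauchy interlacing, $\Gr(\Sigma_S)$ then has at most one non-positive eigenvalue. Since the matrix is not positive definite, it has exactly one non-positive eigenvalue, so either $\det=0$ or $\det<0$.
\begin{itemize}
\item If $\det=0$, then $\Sigma_S$ is positive semidefinite and degenerate, i.e.\ parabolic. This is excluded by Proposition~\ref{prop:vinberg}.
\item Otherwise $\det<0$, which is the remaining case.
\end{itemize}
Thus $\Sigma_S$ is connected, all its proper subdiagrams are elliptic, and its Gram determinant is negative. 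By the definition in \S\ref{sec:prelim} it is a Lann\'er diagram of order $6$, which Lann\'er's classification rules out.

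The only point needing care is Step~3, where the parabolic alternative must be excluded. It is exactly here that compactness enters, through Vinberg's condition in Proposition~\ref{prop:vinberg}. The example $\widetilde{A_6}$ discussed in \S\ref{sec:prelim} shows the conclusion would fail without it. Nothing in the argument depends on the truncation of the recorded hypergraph, which is why the lemma can be applied as a filter on $6$-subsets.
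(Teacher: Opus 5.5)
Your proof is correct and takes the same route as the paper, which simply invokes the missing-face/Lann\'er correspondence of Proposition~\ref{prop:faces} together with Lann\'er's bound $\ell\le5$; your Steps 1--3 re-derive that correspondence for $|S|=6$, using Proposition~\ref{prop:vinberg} to exclude the parabolic case. One correction to your closing remark: the example showing that compactness is needed at this size is a $6$-node affine diagram such as $\widetilde{A_5}$, not the $7$-node $\widetilde{A_6}$.
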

\begin{proof}
By Proposition~\ref{prop:faces} a missing face of size $6$ would correspond to a
Lann\'er subdiagram of order $6$, and by \cite{Lanner1950} Lann\'er diagrams exist
only in orders $2\le\ell\le5$.
\end{proof}

\begin{lemma}[a facet disjoint from two others is a prism]
\label{lem:prism}
Let $P$ be a compact hyperbolic Coxeter $6$-polytope with $10$ facets and let $f_i$
be disjoint from exactly two other facets. Then $f_i$ is combinatorially the
simplicial prism $\Delta^4\times I$, and in particular its missing-face profile is
$2^1 5^1$.
\end{lemma}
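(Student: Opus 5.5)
As in the proof of Lemma~\ref{lem:degree}, I would start from simplicity (Proposition~\ref{prop:simple}). If $f_i$ is disjoint from exactly two other facets, then the remaining $10-1-2=7$ facets $f_j$ each meet $f_i$ in a distinct facet of $f_i$. So $f_i$ is a compact hyperbolic Coxeter $5$-polytope with exactly $7=5+2$ facets. The classification of compact Coxeter $d$-polytopes with $d+2$ facets (Kaplinskaja, Esselmann) then applies. Combinatorially, a simple $d$-polytope with $d+2$ facets is a product $\Delta^a\times\Delta^b$ with $a+b=d$ and $a,b\ge1$. This is the standard fact that its Gale diagram is $1$-dimensional, equivalently that the polytope has exactly two missing faces, which are disjoint and partition the facets. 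For $d=5$ the candidates are therefore $\Delta^1\times\Delta^4$, $\Delta^2\times\Delta^3$, and the products with one factor of dimension at most $1$.

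The core step is to exclude $\Delta^2\times\Delta^3$ as the combinatorial type of a compact Coxeter $5$-polytope. I would do this in one of two ways. The first is to quote Esselmann's theorem directly: compact Coxeter polytopes that are products of two simplices of dimension at least $2$ occur only in dimension $4$, namely the seven Esselmann polytopes of type $\Delta^2\times\Delta^2$. In dimension $\ge5$ every compact example with $d+2$ facets is a simplicial prism, that is, Kaplinskaja's list in $d=5$. The second, more self-contained route uses Proposition~\ref{prop:faces}. The missing faces of $\Delta^a\times\Delta^b$ are the two factor facet-sets, of sizes $a+1$ and $b+1$. For $\Delta^2\times\Delta^3$ these are a Lann\'er diagram of order $3$ and one of order $4$. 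One would then show that no signature-$(5,1)$ Gram matrix arises with these as the only non-elliptic pieces. This is exactly the content of Esselmann's argument, so citing \cite{Esselmann1996} (or Kaplinskaja \cite{Kaplinskaja1974} for the prism case) is the realistic plan.

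Granting that classification, $f_i\cong\Delta^1\times\Delta^4=\Delta^4\times I$. Its missing faces are the two facet-sets of the factors: the pair of disjoint ``end'' facets, of size $2$, and the five lateral facets, of size $5$. This gives the profile $2^1 5^1$.

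The main obstacle is making the citation precise. I need a statement that explicitly covers dimension $5$ with $7$ facets and rules out $\Delta^2\times\Delta^3$, not just a list of diagrams. I would check that Esselmann's result is stated for all compact $d$-polytopes with $d+2$ facets, which is the form the abstract already invokes as ``the classification of compact Coxeter $d$-polytopes with $d+2$ facets''.
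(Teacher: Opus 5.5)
Your outline follows the paper's proof step for step, and it inherits that proof's gap. The sentence ``so $f_i$ is a compact hyperbolic Coxeter $5$-polytope'' does not follow from simplicity, and it is false in general; the paper makes the same assertion in the proof of Lemma~\ref{lem:facet}.

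Simplicity controls the combinatorics of $f_i$, not its angles. Project the normals to $e_i^{\perp}$. The dihedral angle $\theta'_{jk}$ of $f_i$ along $f_i\cap f_j\cap f_k$ then satisfies
\[
\cos\theta'_{jk}=\frac{\cos\theta_{jk}+\cos\theta_{ij}\cos\theta_{ik}}{\sin\theta_{ij}\,\sin\theta_{ik}}.
\]
This equals $\cos\theta_{jk}$ when $\theta_{ij}=\theta_{ik}=\pi/2$, but in general it is not of the form $\cos(\pi/m)$. Take the compact Lann\'er simplex whose diagram is a path with labels $4,3,5$, let $i$ be the second node, and let $j,k$ be its neighbours. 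Then $\theta_{ij}=\pi/4$, $\theta_{ik}=\pi/3$, $\theta_{jk}=\pi/2$, so $\cos\theta'_{jk}=1/\sqrt3$. This lies strictly between $\cos(\pi/3)$ and $\cos(\pi/4)$, so that facet is not a Coxeter triangle.

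All you actually know is that $f_i$ is compact and acute-angled. No acute-angled version of Kaplinskaja--Esselmann can rescue the step. A small Euclidean product of a regular triangle and a regular tetrahedron, placed about the centre of the Klein model of $\HH^5$, is a compact acute-angled $\Delta^2\times\Delta^3$. Your ``self-contained'' route has the same defect, since $f_i$ has no Coxeter diagram of its own.

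What survives is the combinatorics: $f_i$ is $\Delta^1\times\Delta^4$ or $\Delta^2\times\Delta^3$, and your profile for the prism is right. The exclusion of $\Delta^2\times\Delta^3$ has to be done in $\Sigma(P)$ itself. By Lemma~\ref{lem:facetmf}, for each missing face $J$ of $f_i$, exactly one of $J$ and $J\cup\{i\}$ is a missing face of $P$, hence a Lann\'er subdiagram of $\Sigma(P)$.

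This is enough to repair the simplex case of Lemma~\ref{lem:degree}. A simplicial facet would make $N(i)$, which has size $6$, a missing face of $f_i$. That forces a Lann\'er diagram of order $6$ or $7$, contrary to Corollary~\ref{cor:mfsize}.

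For $\Delta^2\times\Delta^3$, however, the two factor facet-sets have sizes $3$ and $4$. They yield Lann\'er subdiagrams of order $3$ or $4$ and of order $4$ or $5$, all of which are permitted, so counting gives no contradiction. You need a genuine argument on the labels and Gram matrix of $P$ around $f_i$, or a search over the affected types, before the lemma is proved.
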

\begin{proof}
As in Lemma~\ref{lem:degree}, $f_i$ is a compact hyperbolic Coxeter $5$-polytope
with $9-2=7$ facets, that is with $d+2$ facets for $d=5$. Compact hyperbolic
Coxeter $d$-polytopes with $d+2$ facets are completely classified: they are the
simplicial prisms of \cite{Kaplinskaja1974} together with the products of two
simplices of \cite{Esselmann1996}, and the latter occur only for $d=4$. Hence
$f_i\cong\Delta^4\times I$. Its facets are the five ``sides'' $F\times I$, one for
each facet $F$ of $\Delta^4$, and the two ``ends'' $\Delta^4\times\{0\},
\Delta^4\times\{1\}$; the ends are disjoint, giving a missing face of size $2$, and
the five sides have empty common intersection while any four of them meet, giving a
minimal non-face of size $5$. There are no others, so the profile is $2^1 5^1$.
\end{proof}

This is a strong condition, because the profile of $f_i$ is determined by the
combinatorial type of $P$ (Lemma~\ref{lem:facetmf}) and can simply be read off. Of the
$\NumTypes$ types it eliminates $\NumPrism$ beyond those already removed by
Lemmas~\ref{lem:degree} and~\ref{lem:mfsix}.

The second lemma is not vacuous, and it is worth being explicit about why. The
generator records the minimal non-faces of each type up to size $5$; a type may
therefore possess a minimal non-face of size $6$ without one appearing in its
recorded list, and this is detected by comparing the vertex list against the
$6$-subsets of facets containing no recorded missing face. Of the $\NumTypes$
types, $\NumDegKilled$ are excluded by Lemma~\ref{lem:degree} and a further
$\NumBigMF$ by Lemma~\ref{lem:mfsix}, leaving $\NumAfterDegMF$; Lemma~\ref{lem:prism}
then removes $\NumPrism$ more, leaving $\NumAfterComb$.

\begin{lemma}[facet admissibility]
\label{lem:facet}
Let $P$ be a compact Coxeter $6$-polytope with $10$ facets and let $f_i$ be a
facet meeting all nine others. Then the combinatorial type of $f_i$, a simple
$5$-polytope with $9$ facets, has one of the four missing-face size profiles
\[
2^6 5^3,\qquad 2^5 3\,4\,5^2,\qquad 2^4 3^2 4^2 5,\qquad 2^3 3^3 4^3 .
\]
\end{lemma}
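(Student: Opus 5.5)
The plan is to reduce the statement to the Ma--Zheng classification of compact Coxeter $5$-polytopes with $9$ facets, following the pattern of Lemmas~\ref{lem:degree} and~\ref{lem:prism}.

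First, since $P$ is simple (Proposition~\ref{prop:simple}) and $f_i$ meets all nine other facets, each intersection $f_i\cap f_j$ with $j\neq i$ is a facet of $f_i$, and distinct $j$ give distinct facets. Hence $f_i$ is a simple $5$-polytope with exactly $9$ facets. Moreover $f_i$ is itself a compact hyperbolic Coxeter $5$-polytope. It is compact because it is a face of a compact polytope. Its dihedral angles are of the form $\pi/m$ because, at a ridge $f_i\cap f_j\cap f_k$ of $f_i$, the subdiagram on $\{i,j,k\}$ is elliptic by Proposition~\ref{prop:faces}. The standard Vinberg fact that a face of a Coxeter polytope whose diagram is orthogonal to its complement in the relevant elliptic subdiagram is again Coxeter needs care here. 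So I would instead argue as the paper does in Lemma~\ref{lem:degree}: facets of compact Coxeter polytopes are Coxeter polytopes, since the reflections in $f_j$, for $f_j$ meeting $f_i$ at angle $\pi/2$, preserve $f_i$. Failing that, I would cite Borcherds' or Allcock's facet result, or the convention already used in Lemma~\ref{lem:degree}, which implicitly treats the facet as a compact Coxeter polytope. For the plan I take the implicit convention of Lemma~\ref{lem:degree} as given.

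Second, I invoke \cite[\S6]{MaZheng2023}. Exactly six combinatorial types of simple $5$-polytopes with $9$ facets carry a compact Coxeter structure. For each of them I compute the missing-face size profile directly from its missing-face list, which is part of the published data and also of our validated $d=5$ census (\S\ref{sec:genvalid}). The claim is that these six types realize only the four profiles listed in the statement, with several types sharing a profile. In particular, the polytopes realizing profile $2^3 3^3 4^3$ correspond to the known family. This step is a finite lookup.

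The main obstacle is bookkeeping rather than mathematics. I must make sure the six Ma--Zheng types are identified with types in our own $d=5$ list up to relabelling, so that their profiles are read off correctly and not transcribed from the paper's figures. I would do this by exact permutation search on missing-face hypergraphs, as in \S\ref{sec:genvalid}, then tabulate the profiles and check that the set of profiles is exactly the four displayed. A secondary consistency check is to use Corollary~\ref{cor:mfsize} and Lemma~\ref{lem:p2} applied to $f_i$ (so $p(f_i)\ge2$). Each listed profile does satisfy these: every size lies in $\{2,3,4,5\}$ and there are at least two pairs of size $2$.
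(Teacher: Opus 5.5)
You follow the paper's route exactly: simplicity gives $f_i$ nine facets, $f_i$ is taken to be a compact Coxeter $5$-polytope, and the profiles of Ma--Zheng's six types are read off. The gap is the middle step, on which everything rests, and none of your justifications for it works.

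Ellipticity of $\{i,j,k\}$ does not make the dihedral angle of $f_i$ at its ridge $f_i\cap f_j\cap f_k$ a submultiple of $\pi$. That angle is the side lying on $f_i$ of the spherical link triangle whose angles are $\pi/m_{ij},\pi/m_{ik},\pi/m_{jk}$, so
\[
\cos a_i=\frac{\cos(\pi/m_{jk})+\cos(\pi/m_{ij})\cos(\pi/m_{ik})}{\sin(\pi/m_{ij})\,\sin(\pi/m_{ik})}.
\]
For an $A_3$ triple with $i$ an end node ($m_{ij}=2$, $m_{ik}=m_{jk}=3$) this gives $a_i=\arccos(1/\sqrt3)$, strictly between $\pi/4$ and $\pi/3$. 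This already happens in the Lann\'er simplex $[5,3,3,3]$: each end facet is a compact tetrahedron in $\HH^3$ that is not Coxeter.

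The reflection argument does not rescue the step either. Reflections in orthogonal neighbours preserve $H_i$ (not $f_i$), and restrict to reflections of $H_i$ across $H_i\cap H_j$. More generally this works when $m_{ij}$ is even, via a mirror of $\langle r_i,r_j\rangle$ orthogonal to $H_i$. When $m_{ij}$ is odd, no element of the reflection group does this. Such an element would fix a relative-interior point of $f_i\cap f_j$, and the stabiliser of that point is the dihedral group $\langle r_i,r_j\rangle$, which for odd $m_{ij}$ contains neither a mirror orthogonal to $H_i$ nor the half-turn. So in general a facet of a compact Coxeter polytope is only a compact acute-angled polytope. Ma--Zheng's theorem classifies Coxeter $5$-polytopes, so it cannot be applied to $f_i$.

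Your fallbacks do not close this. The facet criteria of Borcherds and Allcock are conditional on the labels of the edges at node $i$. Lemma~\ref{lem:facet}, by contrast, is used as a filter on combinatorial types before any labels exist. And ``the implicit convention of Lemma~\ref{lem:degree}'' is the same unproved assertion, not a reason for it.

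The paper's own proof closes this step in the same way. It states that each facet of a compact Coxeter polytope is a compact Coxeter polytope, with a pointer to \cite[Prop.~4.6]{Burcroff2024} and \cite{Vinberg1985} and to ``the reflection group of $P$ restricted to the hyperplane of $f_i$''. So your proposal is as complete as the paper's argument, but the example above shows that sentence is false without further hypotheses. What is missing, in both, is a reason why the combinatorial type of $f_i$ must carry a compact Coxeter structure, or else a list of the types an acute-angled facet can have. Neither the restriction of the reflection group to $H_i$ nor Ma--Zheng's classification supplies it.
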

\begin{proof}
Each facet of a compact Coxeter polytope is itself a compact Coxeter polytope of
one lower dimension (the reflection group of $P$ restricted to the hyperplane of
$f_i$; see \cite[Prop.~4.6]{Burcroff2024} and \cite{Vinberg1985}). If $f_i$
meets all nine other facets, then, $P$ being simple, each $f_i\cap f_j$ is a
facet of $f_i$ and distinct $j$ give distinct facets, so $f_i$ has exactly nine
facets and is a
compact Coxeter $5$-polytope with $9$ facets. Ma and Zheng prove that ``only six
of all simple $5$-polytopes with $9$ facets admit compact hyperbolic structure''
\cite[\S6, Tables~13--14]{MaZheng2023}; these are their $P_{312}$, $P_{319}$,
$P_{322}$, $P_{302}$, $P_{313}$, $P_{284}$. We computed the missing-face
profiles of these six directly from their combinatorial types and obtained
exactly the four profiles listed (the first is shared by $P_{312},P_{319},
P_{322}$, the second by $P_{302}$, the third by $P_{313}$, the fourth by
$P_{284}$).
\end{proof}

Lemma~\ref{lem:facet} is a genuine weakening of ``$f_i$ is one of the six'': it
tests only the profile, so it retains any type whose facet merely shares a
profile with a realizing $5$-polytope. It therefore errs strictly on the side of
keeping types.

The profile of $f_i$ must be computed from the incidence structure and not read off
the missing faces of $P$, because the naive rule is wrong: if $\{i,a,b\}$ is a
missing face of $P$ then $\{a,b\}$ is a missing face of $f_i$, yet $\{a,b\}$ is a
\emph{face} of $P$. The correct rule is the following, and it is what the
implementation uses.

\begin{lemma}[missing faces of a facet]
\label{lem:facetmf}
Let $P$ be simple and let $f_i$ be any facet. Identify the facets of $f_i$ with
$N(i)=\{j\neq i: \{i,j\}\text{ is a face of }P\}$ via $f_j\mapsto f_i\cap f_j$.
Then $J\subseteq N(i)$ is a face of $f_i$ if and only if $J\cup\{i\}$ is a face of
$P$; consequently the missing faces of $f_i$ are the minimal non-faces of the
link of $i$ in the simplicial complex dual to $P$. In particular, when $f_i$ meets
every other facet, $N(i)=[n]\setminus\{i\}$.
\end{lemma}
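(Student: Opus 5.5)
The plan is to work in the dual picture and reduce everything to face containment in $P$.

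\emph{Step 1: faces of $f_i$ are faces of $P$ containing $i$.} Since $P$ is simple (Proposition~\ref{prop:simple}), every nonempty face $F$ of $P$ is contained in exactly $\operatorname{codim}F$ facets. Moreover, $F=\bigcap_{j\in J}f_j$ for the set $J$ of facets containing it. A face of $f_i$ is a face of $P$ lying in $f_i$. For $J\subseteq N(i)$, the corresponding intersection of facets of $f_i$ is
\[
\bigcap_{j\in J}(f_i\cap f_j)=f_i\cap\bigcap_{j\in J}f_j=\bigcap_{j\in J\cup\{i\}}f_j .
\]
So $J$ is a face of $f_i$, meaning the facets $f_i\cap f_j$ for $j\in J$ have nonempty common intersection, exactly when $J\cup\{i\}$ is a face of $P$.

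\emph{Step 2: the identification is a bijection onto the facets of $f_i$.} I must check that $j\mapsto f_i\cap f_j$ hits every facet of $f_i$, and does so injectively. Every facet of $f_i$ is a codimension-$2$ face of $P$ inside $f_i$. By simplicity it lies in exactly two facets, $f_i$ and some $f_j$, so it equals $f_i\cap f_j$. Such a $j$ must lie in $N(i)$, because $\{i,j\}$ is then a face. Conversely, for $j\in N(i)$ the set $f_i\cap f_j$ is nonempty and hence a codimension-$2$ face. Injectivity follows because the two facets containing a given codimension-$2$ face are determined by it. This is the only place where simplicity does real work, and it is the one point needing care.

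\emph{Step 3: translate to links.} In the dual simplicial complex $\Delta$, whose faces are the $S$ with $\bigcap_{S}f_j\neq\emptyset$, the link of $i$ is
\[
\{J : i\notin J,\ J\cup\{i\}\in\Delta\}.
\]
Its vertex set is exactly $N(i)$. Step 1 shows the face complex of $f_i$, under the identification of Step 2, coincides with this link. Missing faces are minimal non-faces, so they are determined by the complex, and therefore agree as well.

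\emph{Step 4: the special case.} If $f_i$ meets every other facet, then $\{i,j\}$ is a face for every $j\neq i$. Hence $N(i)=[n]\setminus\{i\}$.

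I expect no real obstacle beyond stating Step 2 carefully. One could also note, as a sanity check, the example preceding the lemma. If $\{i,a,b\}$ is a missing face of $P$, then $\{a,b\}$ lies in the link's vertex set but $\{a,b,i\}\notin\Delta$. Both one-element sets $\{a\}$ and $\{b\}$ are faces of the link, since $\{i,a\}$ and $\{i,b\}$ are proper subsets of a minimal non-face. Thus $\{a,b\}$ is a minimal non-face of the link, exactly as the lemma predicts.
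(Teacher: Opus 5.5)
Your proof is correct and is essentially the paper's: the identity $\bigcap_{j\in J}(f_i\cap f_j)=f_i\cap\bigcap_{j\in J}f_j$ gives the face correspondence, and the statement about missing faces follows because minimal non-faces are determined by the faces. Your Step~2, which checks that $j\mapsto f_i\cap f_j$ is a bijection from $N(i)$ onto the facets of $f_i$, makes explicit a point the paper builds into the identification and addresses only by noting that indices $j\notin N(i)$ give no facet.
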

\begin{proof}
$\bigcap_{j\in J}(f_i\cap f_j)=f_i\cap\bigcap_{j\in J}f_j$, which is non-empty
exactly when $J\cup\{i\}$ is a face of $P$; and $f_i\cap f_j=\emptyset$ for
$j\notin N(i)$, so those $j$ index no facet of $f_i$. Minimal non-faces of a
complex are determined by its faces, which gives the second statement.
\end{proof}

Applied to all facets meeting all nine others---detected
combinatorially as the facets occurring in no size-$2$ missing face---it reduces
the $\NumTypes$ types to $\NumSurvivors$, and combined with
Lemmas~\ref{lem:degree}, \ref{lem:mfsix} and~\ref{lem:prism} to $\NumRequired$. This is by far the
largest single reduction, and it is exactly as strong as Ma--Zheng's $d=5$
classification; see \S\ref{sec:assumptions}\ref{as:facet}.

We record the resulting cascade, since it is what delimits the search:
\[
\begin{array}{rcl}
\NumTypes &\text{types}& \text{generated (Proposition~\ref{prop:387})}\\
-\,\NumDegKilled && \text{Lemma~\ref{lem:degree}, bounded dashed degree}\\
-\,\NumBigMF && \text{Lemma~\ref{lem:mfsix}, no missing face of size }6\\
-\,\NumPrism && \text{Lemma~\ref{lem:prism}, degree-2 facets are prisms}\\
\NumAfterComb && \text{remain}\\
\NumRequired && \text{also pass the facet filter, and are searched.}
\end{array}
\]
Every step is a theorem, and none of them touches the realizing type.

That last point is worth more than an assurance, and it can be measured.
$\NumRedundantSearched$ of the $\NumRedundant$ excluded types have also been put
through the search directly, along with the $\NumRequired$ that survive: all
$\NumSearched$ are rigorous zeros by exhaustion except the realizing type, which
yields its one polytope. So the exclusions discard nothing realizable, and the
evidence is independent of their proofs. (The $\NumThreeFree$ not covered are the
$3$-free ones. They are excluded by Lemma~\ref{lem:degree} like the rest, but they
carry no Lann\'er constraint at all, so their label trees do not terminate in
reasonable time and the search cannot corroborate them --- which is why a theorem is
wanted there.) Table~\ref{tab:pertype} reports every
survivor with the lemma that decides it; the search data for the excluded types are
in \cite{Repo}.

\begin{lemma}[Esselmann; $3$-free polytopes need $2d$ facets]
\label{lem:esselmann}
A compact Coxeter $d$-polytope all of whose missing faces have size $2$ (a
\emph{$3$-free} polytope) has at least $2d$ facets, with equality only for the
$d$-cube.
\end{lemma}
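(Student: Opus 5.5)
The plan is to prove that a compact Coxeter $d$-polytope $P$ all of whose missing faces have size $2$ has at least $2d$ facets, with equality only for the cube. I would argue by induction on $d$, working in the dual simplicial complex: $3$-freeness says this complex is \emph{flag}, since $S$ is a face as soon as every pair in $S$ is a face. Flagness passes to links, so every face of $P$ is again $3$-free. By the argument of Lemma~\ref{lem:degree} every face is also a compact Coxeter polytope (iterate \cite[Prop.~4.6]{Burcroff2024}), so the induction hypothesis applies to faces. The base case $d=2$ is immediate: a $3$-free polygon has no three pairwise adjacent sides, so it is not a triangle and has at least $4$ sides.

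For the inductive step I would fix a vertex $v$ of $P$ and the $d$ facets $f_1,\dots,f_d$ through it. Each $f_i$ contains $v$, so their common part is nonempty. I then pass to the facet $f_1$, a $3$-free compact Coxeter $(d-1)$-polytope. By induction it has at least $2d-2$ facets, each of the form $f_1\cap f_j$ with $j\in N(1)$. So $f_1$ meets at least $2d-2$ other facets.

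The key step is to produce at least one further facet of $P$ disjoint from $f_1$. That would give $n\ge 1+(2d-2)+1=2d$. Here I would use the flag property together with geometry rather than combinatorics. Consider the edge of $P$ through $v$ that leaves $f_1$, namely the intersection of $f_2,\dots,f_d$, which is a $1$-face. Its other endpoint $w$ lies on $f_2,\dots,f_d$ and one new facet $g$. Since $w\notin f_1$, the pair $\{f_1,g\}$ is a candidate non-face. I expect this to fail combinatorially in general, so I would switch to the standard Esselmann/Vinberg argument instead. For a flag simple polytope, every facet $f$ has an \emph{opposite} facet disjoint from it. To prove this, take a point of $P$ farthest from the hyperplane of $f$; it is a vertex lying on $d$ facets. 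If each of these facets met $f$, then by flagness the $d+1$ facets would be pairwise intersecting, hence form a face, which is impossible in a simple polytope. So some facet through the farthest vertex is disjoint from $f$. This gives the extra facet and hence $n\ge2d$.

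The main obstacle is equality. If $n=2d$, then by induction every facet is a $(d-1)$-cube, and every facet meets exactly $2d-2$ others and is disjoint from exactly one, its opposite. The disjointness graph is then a perfect matching on the $2d$ facets, with no other missing faces. So the missing-face hypergraph is that of the cross-polytope boundary, and by Lemma~\ref{lem:facetmf} and the remark that missing faces determine the complex, $P$ is combinatorially the $d$-cube. One care point is to keep "equality only for the $d$-cube" combinatorial. A metric statement would need more, and the paper only uses the combinatorial exclusion.
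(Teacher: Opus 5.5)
Your argument is correct in substance once one justification is replaced, and it takes a different route from the paper. The paper's proof is only a citation of \cite[Lemma~6.7]{Esselmann1994}, via \cite[Lemma~10.6]{Burcroff2024}. You give instead the classical self-contained argument that a flag simplicial $(d-1)$-sphere has at least $2d$ vertices, with equality only for the boundary of the cross-polytope. A fixed facet contributes itself, at least $2d-2$ neighbours by induction, and at least one facet disjoint from it. Your route buys independence from the citation. It also shows that nothing hyperbolic is involved: the compact Coxeter hypothesis enters only through simplicity (Proposition~\ref{prop:simple}). The citation buys brevity, and it is all the paper needs, since Corollary~\ref{cor:threefree} uses only the inequality.

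The step to change is where you apply the induction hypothesis to $f_1$. You justify it by the claim, borrowed from the proof of Lemma~\ref{lem:degree}, that every face of a compact Coxeter polytope is again a compact Coxeter polytope. That claim is false in general. Let $\{j,i,k\}$ be an elliptic subdiagram of type $A_3$ with $i$ in the middle. The dihedral angle $\alpha$ of $f_i$ along $f_i\cap f_j\cap f_k$ satisfies $\cos\alpha=\bigl(\cos^2(\pi/3)+\cos(\pi/2)\bigr)/\sin^2(\pi/3)=1/3$, which is not of the form $\pi/m$. This already happens in the compact Lann\'er $4$-simplex with linear diagram $[5,3,3,3]$.

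The claim is also unnecessary. State and prove the lemma for all simple $d$-polytopes whose missing faces all have size $2$. This class is closed under passing to facets. A facet of a simple polytope is simple. By Lemma~\ref{lem:facetmf} its missing faces are the minimal non-faces of a vertex link, and in a flag complex that link is the induced subcomplex on the neighbours, hence flag. None of your steps uses anything beyond this.

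Two simplifications while you rewrite. First, the farthest-point choice is not needed: any vertex $v\notin f$ works. Otherwise $f$ and the $d$ facets through $v$ would be $d+1$ pairwise meeting facets, hence a face by flagness, which is impossible in a simple polytope. Second, the abandoned edge argument at $v$ can simply be deleted.

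Your equality argument is fine, and it does not even need the equality case of the induction. If $n=2d$, each facet is disjoint from exactly one other. So the missing faces are $d$ pairwise disjoint pairs, which is the missing-face system of the $d$-cube.
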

\begin{proof}
This is \cite[Lemma~6.7]{Esselmann1994}, quoted as
\cite[Lemma~10.6]{Burcroff2024}.
\end{proof}

\begin{corollary}
\label{cor:threefree}
No compact Coxeter $6$-polytope with $10$ facets is $3$-free.
\end{corollary}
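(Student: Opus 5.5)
The corollary should follow from Lemma~\ref{lem:esselmann} by a direct count; Lemma~\ref{lem:degree} gives an independent second route. Let $P$ be a compact Coxeter $6$-polytope with $10$ facets, and suppose for contradiction that it is $3$-free, i.e.\ every missing face has size $2$.

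\textbf{First route.} Lemma~\ref{lem:esselmann} with $d=6$ says a $3$-free compact Coxeter $6$-polytope has at least $2d=12$ facets. Since $10<12$, this is already a contradiction. The only thing to check is that the hypotheses match. Lemma~\ref{lem:esselmann} applies to compact hyperbolic Coxeter polytopes of any dimension, and ``$3$-free'' is defined purely through missing-face sizes. So no extra compatibility is needed.

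\textbf{Second route.} I would avoid citing Esselmann and argue combinatorially. Consider the dual simplicial complex of $P$. If every minimal non-face has size $2$, it is a flag complex, determined by the graph $H$ of dashed edges: a subset is a face exactly when it is independent in $H$. By Lemma~\ref{lem:degree}, $H$ has maximum degree at most $2$. The vertices of $P$ are the independent $6$-sets of $H$ on $10$ nodes.

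Simplicity then forces the following. Every independent $5$-set (an edge of $P$) must extend to exactly two independent $6$-sets. Every independent set must extend to a maximal one of size exactly $6$, since $P$ is a simple $6$-polytope and every face lies in a vertex.

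I would then enumerate the graphs $H$ on $10$ nodes with $\Delta(H)\le2$, i.e.\ disjoint unions of paths and cycles. For a flag sphere all maximal independent sets must have size $6$. Removing a vertex $v$ together with its neighbours leaves a flag sphere of dimension one lower (the link), with at most $9-\deg v$ nodes. An isolated node of $H$ meets every facet, so it cannot occur in a flag sphere other than a join situation. Inducting on this link structure, the only flag spheres whose graph has degree $\le2$ are joins of $4$-cycles' complements, i.e.\ boundaries of cross-polytopes. Dually these give cubes, which need $2d=12$ facets.

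\textbf{Main obstacle.} The delicate part is this last classification of flag spheres with $\Delta(H)\le2$. Because of it, I would present the Esselmann route as the proof and the combinatorial one only as corroboration.
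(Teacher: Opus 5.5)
Your first route is the paper's proof: Lemma~\ref{lem:esselmann} with $d=6$ requires at least $12$ facets, and $10<12$. Nothing further is needed.

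Your corroborating second route, however, rests on a false classification. The pentagon is a flag $1$-sphere whose missing-face graph is $C_5$, which has maximum degree $2$. So flag spheres with $\Delta(H)\le 2$ are not only cross-polytope boundaries. The independence complex of $H$ is the join of the independence complexes of its components, and among paths and cycles both $K_2$ (giving $S^0$) and $C_5$ (giving the pentagon) have spherical independence complexes.

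The conclusion survives this correction. Dually you get products of $a$ intervals and $b$ pentagons, of dimension $a+2b$ with $2a+5b=2(a+2b)+b$ facets, so $d=6$ still forces at least $12$ facets.

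There is, though, a shorter repair that uses a step you already wrote down. The link of a vertex $v$ in a flag sphere is again a flag sphere; here it is a flag $4$-sphere on $9-\deg v\le 9$ vertices. A flag $(k-1)$-sphere has at least $2k$ vertices, by induction: $2(k-1)$ from the link of a vertex, one for the vertex itself, and at least one outside its closed star, since otherwise flagness makes the complex a cone, which is not a sphere. With $k=5$ this needs at least $10$ vertices, a contradiction. This uses neither Lemma~\ref{lem:degree} nor any classification, and it is exactly the combinatorial content of Esselmann's bound.
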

\begin{proof}
A $3$-free compact Coxeter $6$-polytope needs at least $12$ facets, and $10<12$.
\end{proof}

Exactly $\NumThreeFree$ of the $\NumSurvivors$ types passing the facet filter are
$3$-free, both with $p=6$ and missing-face profile $\ThreeFreeProfile$. Each of
them is already excluded by Lemma~\ref{lem:degree}, so Corollary~\ref{cor:threefree}
is not needed for the classification; we retain it because it is the cleanest
statement of why no $3$-free type can occur here at all.

\begin{remark}
Burcroff proves \cite[Thm.~8.1]{Burcroff2024} that there is no compact Coxeter
$6$-polytope with $10$ facets whose missing faces have orders only $2$ and $5$,
and states in her abstract the consequence that such a polytope must contain a
missing face of size $3$ or $4$. Her proof of Theorem~8.1 proceeds by analysing
the two combinatorial types she exhibits there, both of which have a missing
face of size $5$; it does not literally cover the $3$-free case, in which no
missing face of size $5$ occurs. (We do not reuse her labels for these two types:
the symbol $I_2$ is needed here for the dihedral Coxeter group $I_2(m)$.) We therefore invoke Lemma~\ref{lem:esselmann}
instead, which settles the $3$-free case unconditionally. Among our
$\NumSurvivors$ surviving types, the only ones without a missing face of size
$3$ or $4$ are the $\NumThreeFree$ $3$-free ones, so
Corollary~\ref{cor:threefree} supersedes any appeal to
\cite[Thm.~8.1]{Burcroff2024}, which is consequently not used in this paper.
\end{remark}

\begin{lemma}[low-weight edges]
\label{lem:burcroff55b}
Let $v_1v_2$ be an ordinary edge of an admissible abstract Coxeter diagram
$\Sigma$. If $\Sigma$ has no Lann\'er diagram containing both $v_1$ and $v_2$,
has a Lann\'er diagram $L$ of order greater than $2$ containing $v_2$, and has
no dashed edge from $v_1$ to any vertex of $L$, then $v_1v_2$ has low weight
$(m\le5)$.
\end{lemma}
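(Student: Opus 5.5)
The plan is to argue by contradiction. Suppose the ordinary edge $v_1v_2$ carries a label $m\ge 6$. We will produce a three-node subdiagram containing both $v_1$ and $v_2$ that must contain a Lann\'er diagram, and that Lann\'er diagram will turn out to contain both $v_1$ and $v_2$, contradicting the first hypothesis. Throughout, ``admissible'' is used in the sense of Proposition~\ref{prop:vinberg}: in particular $\Sigma$ contains no parabolic subdiagram.

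First we find a neighbour of $v_2$ inside $L$. Since $L$ is Lann\'er of order $\ell\ge3$, it is connected. Every proper subdiagram of $L$ is elliptic, so every pair of nodes of $L$ is elliptic. Hence $L$ has no dashed or parallel edges, and every edge of $L$ is ordinary with a finite label. By connectedness, $v_2$ has a neighbour $w\in L$, and the edge $v_2w$ is ordinary with label $k\ge3$.

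Next consider $S=\{v_1,v_2,w\}$. The edge $v_1v_2$ is ordinary by assumption, $v_2w$ is ordinary by the previous step, and $v_1w$ is not dashed because $w\in L$. So $v_1w$ is either absent or ordinary, and all three two-node subdiagrams of $S$ are elliptic. Moreover $S$ is connected, since it contains the path $v_1v_2w$. The connected elliptic Coxeter diagrams of rank $3$ are exactly $A_3$, $B_3$ and $H_3$, and all of their labels are at most $5$. Since $S$ has an edge labelled $m\ge6$, $S$ is not elliptic.

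Now take a minimal non-elliptic subdiagram $S'\subseteq S$. It is connected, since a disjoint union of elliptic diagrams is elliptic, and all its proper subdiagrams are elliptic. Such a diagram is either parabolic or Lann\'er: its Gram matrix has all proper principal minors positive, so it is positive definite, positive semidefinite degenerate, or has exactly one negative eigenvalue with $\det<0$. Admissibility excludes the parabolic case, so $S'$ is Lann\'er. All pairs in $S$ are elliptic, so $S'$ has order $3$ and therefore $S'=S$. Then $S$ is a Lann\'er diagram containing both $v_1$ and $v_2$, which contradicts the hypothesis. Hence $m\le5$.

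For instance, when $v_1w$ is absent and $m=6$, $k=3$, the diagram $S$ is $\widetilde{G_2}$. This is exactly the case where admissibility is used to rule out parabolicity.

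The main obstacle is the fact that a minimal non-elliptic connected subdiagram of an admissible diagram is Lann\'er. This needs the trichotomy for symmetric matrices whose proper principal minors are all positive definite, which forces inertia index at most $1$. I would prove this by an interlacing argument: the matrix with one row and column deleted is positive definite, so at most one eigenvalue is nonpositive, and the sign of the determinant then decides between the parabolic and Lann\'er cases. Everything else is bookkeeping with the hypotheses that $L$ has order greater than $2$ and that there is no dashed edge from $v_1$ to $L$.
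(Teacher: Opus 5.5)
Your argument is correct. It differs from the paper's only in that the paper gives no argument: its proof of Lemma~\ref{lem:burcroff55b} is the bare citation \cite[Lemma~5.5(b)]{Burcroff2024}, whereas you prove the statement directly. You take a neighbour $w$ of $v_2$ in $L$, observe that $\{v_1,v_2,w\}$ is connected with every pair elliptic and one edge of label $\ge6$, and conclude that it is either $\widetilde{G_2}$ or a Lann\'er triangle containing $v_1$ and $v_2$.

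One step should be stated explicitly: $w\neq v_1$. This holds because $v_1\notin L$, which follows from the first hypothesis, since otherwise $L$ itself would be a Lann\'er diagram containing both ends of the edge.

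What your version buys is transparency about which parts of ``admissible'' are used, a term the paper never defines. You need only the absence of bold edges, to make $v_1w$ absent or ordinary, and the absence of parabolic subdiagrams, used exactly once, for $\widetilde{G_2}$. Both are supplied by Proposition~\ref{prop:vinberg} for $\Sigma(P)$ of an actual compact polytope. That is the only setting in which the paper applies the lemma, to compute its low-weight caps, so your proof makes that step self-contained.

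It also shows the lemma is the triangle-group trichotomy the paper already uses in Lemma~\ref{lem:wildcard}(b) and Proposition~\ref{prop:exactgates}. With labels $m\ge6$, $k\ge3$, $j\ge2$ one has $1/m+1/k+1/j\le1$, with equality only at $(6,3,2)$. That one line could replace your interlacing argument, although the interlacing argument is also correct.

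Finally, your proof shows that the dashed-edge hypothesis is needed only for the single vertex $w$, not for all of $L$. The citation, in exchange, is shorter and inherits Burcroff's precise definitions.
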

\begin{proof}
\cite[Lemma~5.5(b)]{Burcroff2024}.
\end{proof}

By Proposition~\ref{prop:faces}, the Lann\'er subdiagrams of $\Sigma(P)$ are
exactly the missing faces of $P$, and the dashed edges are exactly the
size-$2$ missing faces. Hence the hypotheses of Lemma~\ref{lem:burcroff55b}
depend only on the combinatorial type, and the resulting caps $m\le 5$ can be
computed once per type and imposed during enumeration. This is a pruning device only; it
never removes a type.

Finally, the enumerator requires $\Sigma(P)$ to be connected; this is part of
Proposition~\ref{prop:vinberg} (\cite[Remark~4.3]{Burcroff2024}).

\subsection{The label alphabet is not bounded a~priori}
\label{sec:alphabet}

Restricting ordinary edge labels to a finite set is the most attackable step in
classifications of this kind, and we avoid it. We know of no published theorem
bounding the labels of a compact Coxeter $d$-polytope with $d+4$ facets for
$d\ge5$; Burcroff's Lemma~5.5 bounds only those edges satisfying its
hypotheses, and the alphabet $\{2,\dots,10,12\}$ observed in the finished
$d=4$ and $d=5$ censuses is an a~posteriori observation, not a theorem.
Instead we use Ma--Zheng's truncation.

\begin{proposition}[{\cite[Prop.~3.5]{MaZheng2024}}]
\label{prop:mz35}
There are finitely many finite Coxeter groups of rank $r$ with Coxeter matrix
entries at most seven. Consequently it suffices to enumerate potential matrices
with entries at most seven, the other candidates being obtained by substituting
integers greater than seven with the value seven.
\end{proposition}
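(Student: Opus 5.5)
The plan is to show that every combinatorial predicate the enumeration depends on is unchanged when each ordinary label $m\ge7$ is replaced by $7$. These predicates are ellipticity of a subdiagram (Proposition~\ref{prop:faces}) and Lann\'er membership. Once this is shown, the enumeration over the truncated alphabet $\{2,\dots,7\}$, with $7$ read as ``some $m\ge7$'', covers every labelling, and the finiteness statement comes along.

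\textbf{Step 1 (finiteness).} I would use the Coxeter classification of connected elliptic diagrams: $A_n$, $B_n$, $D_n$, $E_{6,7,8}$, $F_4$, $H_3$, $H_4$ and $I_2(m)$. Only the dihedral family carries a label above $5$, apart from $G_2=I_2(6)$. So a connected elliptic diagram with an edge of label $m\ge6$ has exactly two nodes. A finite Coxeter group of rank $r$ is the product of its irreducible components, and there are finitely many ways to split $r$ into component ranks. Components of rank $\ge3$ come from finitely many types for each rank. Components of rank $2$ with entries $\le7$ are $I_2(m)$ with $m\le7$, again finitely many. This gives the first sentence.

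\textbf{Step 2 (ellipticity is invariant).} Let $S$ be a subdiagram containing an ordinary edge of label $m\ge7$. By Step~1, $S$ is elliptic if and only if that edge forms an isolated two-node component and the rest of $S$ is elliptic. This condition does not depend on the value of $m$ as long as $m\ge7$. It is also unchanged by the substitution $m\mapsto7$, since $7\ge6$ already forces the same isolation. So truncation preserves which subsets are elliptic, and therefore the face structure forced by Proposition~\ref{prop:faces}.

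\textbf{Step 3 (Lann\'er membership is invariant).} This is the step I expect to need the most care. I would use Lann\'er's list. Order-$2$ Lann\'er diagrams are dashed or parallel edges, which are not ordinary labels. Orders $4$ and $5$ use only labels $\le5$. So only order-$3$ triangles matter. A connected triangle with labels $(p,q,m)$, $m\ge7$, is Lann\'er if and only if $1/p+1/q+1/m<1$ and every proper subdiagram is elliptic. Its two-node subdiagrams are single edges, hence always elliptic. The case where the triangle is a path (one label $2$) must be checked separately. Such a path with $m\ge7$ is never elliptic, since a label $\ge6$ cannot sit inside a connected elliptic diagram of order $3$. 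It is Lann\'er exactly when its determinant is negative, which holds for all $m\ge7$ because $1/2+1/3+1/7<1$ and the angle sum only decreases as labels grow. In all cases the verdict depends only on whether $m\ge7$, so replacing $m$ by $7$ preserves it.

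\textbf{Step 4 (conclusion).} Ellipticity, non-ellipticity and Lann\'er membership all agree between a labelling and its truncation. Hence any labelling satisfying the forward-checking constraints of the search truncates to one satisfying them in the finite alphabet. The genuine labels are recovered from the truncation by letting each $7$ range over $m\ge7$ as a continuous unknown, to be resolved later by the signature condition. This justifies the second sentence of the proposition.
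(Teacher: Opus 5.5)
Your argument is correct and is essentially the paper's own justification: the paper cites Ma--Zheng for the statement and proves the lossless form as Lemma~\ref{lem:wildcard}. It uses the same three facts as you do: a label $\ge6$ in an elliptic diagram must be an isolated $I_2(m)$ component; a connected triangle with a label $\ge7$ is Lann\'er for every such label, because $1/2+1/3+1/7<1$; and no Lann\'er diagram of order $\ge4$ carries a label $\ge6$ (you read this off Lann\'er's list, while the paper also derives it directly from the elliptic case). Two harmless slips: the finiteness claim follows immediately from counting matrices with entries in $\{2,\dots,7\}$, so the classification is not needed for it; and a parallel edge has $\det G=0$, so it is parabolic rather than an order-$2$ Lann\'er diagram.
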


Concretely, our enumeration uses the alphabet $\{2,3,4,5,6,\star\}$ in which
$\star$ is a wildcard standing for ``any $m\ge7$'', evaluated during
forward-checking at $\cos(\pi/7)$. We record the exact sense in which this is
lossless.

\begin{lemma}
\label{lem:wildcard}
Let $S$ be a set of nodes carrying labels in $\{2,\dots,6\}$ on some pairs and
labels $m_e\ge7$ on a set $W\neq\emptyset$ of pairs.
\begin{enumerate}
\item[(a)] The Gram matrix of $S$ is positive definite for some choice of
$(m_e)_{e\in W}$ with all $m_e\ge7$ if and only if it is positive definite when
every $m_e=7$; indeed the verdict does not depend on $(m_e)$ at all.
\item[(b)] If $|S|=3$, the Gram matrix of $S$ has exactly one negative
eigenvalue for some choice of $(m_e)_{e\in W}$ if and only if it does when every
$m_e=7$.
\item[(c)] If $|S|\ge4$, no Lann\'er diagram on $S$ contains an edge of label
$\ge6$. \emph{(This is part of Lann\'er's classification
\cite{Lanner1950}; see the remark after the proof.)}
\end{enumerate}
\end{lemma}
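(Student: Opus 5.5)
The plan is to treat the three parts separately. Parts (a) and (b) follow from two classical facts: the classification of finite Coxeter groups (connected elliptic diagrams), and the explicit determinant of a rank-$3$ Gram matrix. Part (c) is a direct inspection of Lann\'er's list.

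For (a), I would use that the Gram matrix of $S$ is block diagonal with respect to the connected components of the diagram, so it is positive definite iff each component's block is. By the classification of connected elliptic Coxeter diagrams ($A_n,B_n,D_n,E_{6,7,8},F_4,H_3,H_4,I_2(m)$), the only connected elliptic diagram containing an edge of label $\ge7$ is $I_2(m)$ itself. Hence, for any choice of $(m_e)_{e\in W}$ with all $m_e\ge7$, $S$ is elliptic iff:
\begin{enumerate}
\item[(i)] every $e\in W$ spans a connected component of $S$ of order exactly $2$ (each such block $\begin{pmatrix}1&-\cos(\pi/m_e)\\-\cos(\pi/m_e)&1\end{pmatrix}$ is positive definite for every $m_e$); and
\item[(ii)] the remaining components, which carry only labels in $\{2,\dots,6\}$, are elliptic.
\end{enumerate}
Neither condition mentions the values $m_e$. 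This gives the stronger claim that the verdict is independent of $(m_e)$, and in particular agrees with the verdict at $m_e=7$.

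For (b), let $S$ have order $3$ with labels $m_1,m_2,m_3$ (label $2$ meaning no edge), and write $c_i=\cos(\pi/m_i)\in[0,1)$. Every $2\times2$ principal minor equals $1-c_i^2>0$, so by Cauchy interlacing the matrix has at least two positive eigenvalues. It therefore has exactly one negative eigenvalue iff
\[
\det = 1-c_1^2-c_2^2-c_3^2-2c_1c_2c_3<0 .
\]
By the classical triangle criterion this holds iff $1/m_1+1/m_2+1/m_3<1$. I would then finish by cases on $|W|$.
\begin{enumerate}
\item[$|W|\ge2$:] The sum is at most $2/7+1/2<1$, both for every choice of the wildcard values and at $7$, so the inequality always holds.
\item[$|W|=1$:] Write the labels as $a,b\in\{2,\dots,6\}$ and $m\ge7$. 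Increasing $m$ only decreases the sum, so the condition at $m=7$ implies it for all $m\ge7$. For the converse, suppose $1/a+1/b+1/m<1$ while $1/a+1/b+1/7\ge1$. Then $1/a+1/b\ge 6/7$, which forces $a=b=2$, since otherwise $1/a+1/b\le 5/6$. But with $a=b=2$ the sum $1+1/m$ exceeds $1$ for every $m$, a contradiction.
\end{enumerate}
Alternatively, the monotonicity $\partial\det/\partial c_i=-2c_i-2c_jc_k<0$ gives the forward direction directly.

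For (c), I would inspect Lann\'er's list of compact hyperbolic simplex diagrams of orders $4$ and $5$ (the $9$ in dimension $3$ and the $5$ in dimension $4$). Every edge label there lies in $\{3,4,5\}$. A conceptual reason: every proper subdiagram of a Lann\'er diagram is elliptic, so a label-$\ge6$ edge would sit inside a connected elliptic subdiagram of order $3$. That is impossible by the argument in (a), since $I_2(m)$ is the only connected elliptic diagram with such a label.

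The only step requiring care is the converse direction of (b) with one wildcard, and the case split above settles it. Part (c) is citation-level.
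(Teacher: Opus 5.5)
Your proposal is correct and follows the paper's proof essentially step for step. Part (a) rests on the fact that a large label can only occur in an $I_2(m)$ component. Part (b) uses the triangle criterion with the $\{2,2\}$ dichotomy, and your interlacing remark usefully justifies why $\det<0$ is equivalent to exactly one negative eigenvalue. Your ``conceptual reason'' in (c) is the same third-node argument the paper uses.

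In (c), make explicit three small points. Connectedness of the Lann\'er diagram supplies a third node $w$ adjacent to $u$ or $v$. The subdiagram $\{u,v,w\}$ is proper because $|S|\ge4$. Label exactly $6$ is also excluded, since $G_2=I_2(6)$ is the only connected elliptic diagram carrying it, whereas your part (a) was phrased only for labels $\ge7$.
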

\begin{proof}
(a) By the classification of finite Coxeter groups, a connected elliptic diagram
of rank $\ge3$ has all labels $\le5$ (the connected spherical types of rank
$\ge3$ are $A_r,B_r,D_r,E_{6,7,8},F_4,H_3,H_4$, the Coxeter groups of those
names---not to be confused with Burcroff's combinatorial types $H_i$). Hence in an elliptic diagram
any edge of label $\ge6$ must be an $I_2(m)$ connected component. So $S$ is
elliptic if and only if each $e\in W$ is a connected component $I_2(m_e)$ of $S$
and the rest of $S$ is elliptic; $I_2(m)$ is elliptic for every finite $m$, so
the verdict is independent of $(m_e)$. If some $e\in W$ is not a component, $S$
is non-elliptic for every $m_e\ge6$.

(b) Write $S=\{p,q,r\}$ with $1/p+1/q+1/r$ the usual triangle-group invariant
and $r=m_e\ge7$ the wild label. If $\{p,q\}=\{2,2\}$ the diagram is
$A_1\times I_2(r)$, elliptic for every $r$, and never has a negative
eigenvalue. Otherwise $1/p+1/q\le 1/2+1/3=5/6$ and
$1/p+1/q+1/r\le 5/6+1/7<1$ for every $r\ge7$, so the triangle is hyperbolic
with exactly one negative eigenvalue, for every $r\ge7$. Either way the verdict
at $r=7$ agrees with the verdict at every $r\ge7$. (When two or three edges are
wild the same dichotomy applies, since $1/p+1/q+1/r$ only decreases.)

(c) Suppose $L$ is a Lann\'er diagram of order $k\ge4$ and $uv$ is an edge of
$L$ with label $\ge6$. For any third node $w$ of $L$, the subdiagram on
$\{u,v,w\}$ is a proper subdiagram, hence elliptic, hence by the argument in
(a) it is $A_1\times I_2(m)$, so $w$ is orthogonal to both $u$ and $v$. Since
$k\ge4$ such a $w$ exists, and as this holds for every $w\notin\{u,v\}$ the
pair $\{u,v\}$ is a connected component of $L$, contradicting the connectedness
of a Lann\'er diagram.
\end{proof}

\begin{remark}
Part (c) is not new: it is contained in Lann\'er's classification
\cite{Lanner1950}, which lists every Lann\'er diagram of order $4$ and $5$ and
in which no edge label exceeds $5$. We include the short argument only because
we rely on the statement in the precise form above, and we have in addition
checked it computationally rather than by reading the published figure:
enumerating all connected rank-$3$ diagrams
with labels up to $200$ confirms that none with an edge of label $\ge6$ is
elliptic (the boundary case being the parabolic diagram
$\smash{\widetilde{G_2}}$ with labels $3,6$, whose Gram determinant
$1-\cos^2(\pi/3)-\cos^2(\pi/6)$ vanishes identically), and enumerating from the
definition over labels $\{2,\dots,5\}$ recovers Lann\'er's lists exactly: $9$
diagrams of order $4$ and $5$ of order $5$, largest edge label $5$.
\end{remark}

Part (c) is what lets a missing face of size $\ge4$ carrying an edge of label
$\ge6$ be rejected outright: such a configuration is not the Lann\'er diagram of a polytope
whatever the label is, so the rejection is not merely harmless but exact, and
$\cos(\pi/7)$ never has to be evaluated there. Parts (a) and (b) make the
substitution at $m=7$ lossless for the remaining cases, where the label enters only
through the elliptic test and the three-node Lann\'er criterion. As an independent
check we evaluated both
predicates at $m\in\{7,8,9,10,12,15,20,30,50,100,300,1000\}$ for every label
tuple on $3$ and $4$ nodes over $\{2,\dots,6,\star\}$ with at least one wild
edge ($91$ and $31{,}031$ tuples respectively) and for $8000$ random tuples on
$5$ nodes---about two million $(\text{tuple},m)$ evaluations in all. The verdict
never differed from the verdict at $m=7$.

Wildcard-bearing labellings are not decided at $m=7$; they are resolved by the
range analysis described in \S\ref{sec:search}.

\section{The search}
\label{sec:search}

Fix a surviving combinatorial type $T$ with missing faces $M_1,\dots,M_r$. Its
dashed edges are the $M_i$ of size $2$; write $D$ for that set and $k=|D|$. The
unknowns are an integer label $m_e\ge2$ for each ordinary pair $e$ and a real
weight $x_e>1$ for each $e\in D$, and the constraint is
$\rank\Gr = 7$ with signature $(6,1)$.

\subsection{Enumeration with forward checking}
Ordinary labels are enumerated by backtracking over the alphabet
$\{2,3,4,5,6,\star\}$ in a fixed order that places the edges lying in every
missing face first. Two families of constraints are forward-checked:

\begin{itemize}
\item \emph{Elliptic groups.} For each face of $T$ of size $3$, $4$ or $6$---in
      particular for each vertex, a set of $6$ facets---the corresponding Gram
      submatrix must be positive definite (Proposition~\ref{prop:faces}). The
      condition is on \emph{faces}, not on pairwise-meeting sets: three pairwise
      intersecting facets with empty triple intersection form a missing face of
      size $3$, whose subdiagram is Lann\'er and therefore not positive definite.
\item \emph{Lann\'er groups.} For each missing face of size at least $3$ the Gram
      submatrix must be Lann\'er. (Such a missing face has no dashed pair: by
      minimality it cannot contain a missing face of size $2$.)
\end{itemize}
Only the sizes $3$, $4$ and $6$ are tested, and nothing is lost by that. Every face
of a simple $6$-polytope is contained in a vertex and positive definiteness is
inherited by principal submatrices, so the vertex condition on $6$ facets implies the
condition on faces of every smaller size; the sizes $3$ and $4$ are tested in
addition purely because they prune earlier in the enumeration order, and $1$, $2$ and
$5$ are omitted for the same reason they are redundant.

For each group the set of admissible label tuples is precomputed once and stored
as a bitmask indexed by (edge position, label), so a partial assignment is
extended in $O(1)$ integer operations per group. Edges capped by
Lemma~\ref{lem:burcroff55b} are restricted to $m\le5$ at this stage.

Both predicates are decided \emph{exactly}, with no eigenvalue tolerance anywhere,
which matters because the forward-checking prune is what decides all but one of the
$\NumRequired$ searched types.

\begin{proposition}
\label{prop:exactgates}
Ellipticity and Lann\'er membership are decidable by exact arithmetic on the
labelled diagram, for every label including the wildcard, and the search decides
them that way.
\end{proposition}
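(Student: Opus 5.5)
We reduce both predicates to finite combinatorial lookups plus exact sign tests of determinants in a number field. Ellipticity of a labelled diagram on a node set $S$ is a property of its connected components. First we check, by Lemma~\ref{lem:wildcard}(a), whether every wildcard edge is an isolated $I_2(m)$ component. If so, the wildcard components are elliptic for every $m$ and are discarded. If not, the diagram is non-elliptic for every $m\ge7$, and no arithmetic is needed. The remaining components carry labels in $\{2,\dots,6\}$. Each is elliptic if and only if it is isomorphic, as a labelled graph, to one of the connected spherical diagrams $A_r,B_r,D_r,E_{6,7,8},F_4,H_3,H_4,I_2(m)$. This is a purely combinatorial test on graphs with at most six nodes. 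As a cross-check, positive definiteness can also be decided by Sylvester's criterion: the leading principal minors are polynomials in $\cos(\pi/m)$ for $m\le6$, hence lie in $\QQ(\sqrt2,\sqrt3,\sqrt5)$, and their signs are decided exactly there.

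For Lann\'er membership on a missing face $S$ of size $3$, $4$ or $5$, we split on $|S|$. If $|S|\ge4$ and $S$ carries an edge of label $\ge6$, including $\star$, we reject by Lemma~\ref{lem:wildcard}(c). Otherwise the labels lie in $\{2,\dots,5\}$, and Lann\'er membership is identical to membership in Lann\'er's finite list: $9$ diagrams of order $4$ and $5$ of order $5$, as recovered in the Remark after Lemma~\ref{lem:wildcard}. This is tested by labelled-graph isomorphism. If $|S|=3$, the criterion is combinatorial: the diagram is connected and $1/p+1/q+1/r<1$. Connectedness and ellipticity of all proper subdiagrams are automatic for pairs with finite labels. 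The condition $\det<0$ is equivalent to the triangle inequality, since this triangle determinant has the sign of $1-1/p-1/q-1/r$ up to the standard identity; this is decided in exact rational arithmetic on the labels. When a label is $\star$, Lemma~\ref{lem:wildcard}(b) shows that the verdict at $m=7$ is the verdict for every $m\ge7$. The dichotomy in its proof is itself combinatorial: the remaining two labels equal $\{2,2\}$, or they do not.

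Finally we record that the precomputed bitmask tables of the search are filled by exactly these routines, so every forward-checking decision inherits their exactness. The main obstacle is the triangle case. We must justify that $\det<0$ for a connected three-node diagram is equivalent to $1/p+1/q+1/r<1$, as opposed to relying on a floating-point evaluation of $1-\sum\cos^2-2\prod\cos$. We would cite the classical trichotomy for triangle groups: spherical, Euclidean or hyperbolic according as $\sum 1/m$ is greater than, equal to, or less than $1$. As an independent exact check we would evaluate the determinant in $\QQ(\sqrt2,\sqrt3,\sqrt5)$ for all labels up to $6$.
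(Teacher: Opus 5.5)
Your argument is correct, and for ellipticity and for three-node Lann\'er membership it is the paper's argument. Ellipticity is read off the labelled graph component by component: a wild edge is compatible with ellipticity exactly when it is an isolated $I_2(m)$ component. The triangle case is the classical trichotomy, an exact rational comparison on $1/p+1/q+1/r$ whose verdict is constant for $m_e\ge7$.

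Where you genuinely differ is on four and five nodes. You decide membership by looking the diagram up in Lann\'er's list. The paper instead works from the definition: connected, not elliptic, all proper subdiagrams elliptic (all combinatorial), and $\det\Gr<0$. It uses Jacobi's rule to see that, once every proper subdiagram is elliptic, the sign of that single determinant separates signature $(k,0)$, the degenerate case, and $(k-1,1)$. It then uses Lemma~\ref{lem:wildcard}(c) to confine the entries to $\QQ(\sqrt2,\sqrt3,\sqrt5)$, where the determinant is evaluated exactly.

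Your route needs no field arithmetic at all, but it imports Lann\'er's complete classification as a table and is only as reliable as that table. The paper's route needs only the classification of finite Coxeter groups, which is all the proof of Lemma~\ref{lem:wildcard}(c) uses. As a result, the recovery of Lann\'er's lists in the Remark after that lemma serves as a cross-check rather than as an input.

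By the same token, your closing sentence, that the search's bitmask tables are filled by list lookup, describes a different implementation from the one the paper reports, whose gate computes the determinant in that field. Both are exact, so the mathematics is unaffected, but the statement's last clause is about what the search actually does.

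One slip to fix: the determinant of the $3\times3$ Gram matrix has the sign of $1/p+1/q+1/r-1$, not of $1-1/p-1/q-1/r$. A spherical triangle has positive definite Gram matrix and $\sum 1/m>1$. As written, your sentence would give $\det<0$ exactly when $\sum 1/m>1$, which contradicts the (correct) equivalence you then use.
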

\begin{proof}
Ellipticity needs no arithmetic at all: a diagram is elliptic if and only if each
of its connected components is one of the finite Coxeter types, which is a
condition on a labelled graph.

For Lann\'er membership, a diagram is Lann\'er if and only if it is connected, is
not elliptic, has all proper subdiagrams elliptic, and has $\det\Gr<0$; the last
clause is what separates it from parabolic, and given the third it also fixes the
inertia, because all proper subdiagrams elliptic makes every leading
$(k-1)$-minor positive, so by Jacobi's rule the sign of the determinant decides
between signature $(k,0)$, $(k-1,1)$ and a degenerate case. That sign is exact in
both cases that arise. On three nodes,
\[
  \sig\det\Gr \;=\; \sig\Bigl(\tfrac1p+\tfrac1q+\tfrac1r-1\Bigr),
\]
with a non-edge read as label $2$: the classical trichotomy for the triangle group,
and \emph{integer} arithmetic. A wild edge is not a label but an unknown $m_e\ge7$,
and it is covered too: the sum is evaluated at $m_e=7$ and is strictly decreasing in
$m_e$, so once it is below $1$ there it stays below $1$ for every $m_e\ge7$, and it
is below $1$ at $m_e=7$ for every connected three-node diagram, since two labels
$\ge3$ already give $1/p+1/q\le2/3$ and a label $2$ forces the other two $\ge3$. The
verdict is therefore constant on $m_e\ge7$, which is Lemma~\ref{lem:wildcard}(b). On four or five nodes no
label can exceed $5$ --- by Lemma~\ref{lem:wildcard}(c) a Lann\'er diagram of order
$\ge4$ has no edge of label $\ge6$ --- so the entries $-\cos(\pi/m)$ lie in
$K=\QQ(\sqrt2,\sqrt3,\sqrt5)$ and the determinant is computed exactly there. In
particular $\cos(\pi/7)$, which is cubic over $\QQ$, never enters: on three nodes
the integer criterion covers it, and on more nodes it is excluded outright.
\end{proof}

The Lann\'er gate therefore tests membership, where a floating-point gate would
naturally test the weaker necessary condition ``exactly one negative
eigenvalue''. That is sound, the subdiagram of a missing face being Lann\'er by
Proposition~\ref{prop:faces}; and what the strengthening adds is exactly the
no-parabolic-subdiagram clause of Proposition~\ref{prop:vinberg}, since the
condition it imposes beyond ``one negative eigenvalue'' is $\det\Gr\neq0$. So it
enforces a condition the polytope must satisfy anyway, rather than being merely
argued to be harmless. It also prunes harder, which is why every searched type here
is exhausted at the root of its label tree without any refinement
(\S\ref{sec:exhaust}).

\subsection{Symmetry breaking}
Let $\mathrm{Aut}(T)$ be the automorphism group of the facet--vertex incidence
structure of $T$, computed by VF2 subgraph isomorphism. A partial assignment is
discarded as soon as the vector of labels on its fixed prefix is provably not
lexicographically minimal among the images of that prefix under the
automorphisms stabilising the prefix's domain. Because the comparison is made on
the whole prefix vector rather than pair by pair, a discarded branch is
covered by the canonical branch in the same orbit, which is scheduled
separately; no solution orbit is lost. This is a throughput device only: on the two
most resistant $d=6$ types it turns prefixes that exhaust a $1800$~s enumeration
budget into sub-second certificates.

Because a failure of the device would be silent --- an orbit discarded whose
canonical member is never scheduled --- the verdicts are also checked with it
switched off. Seven of the $\NumRequired$ searched types are cheap enough to
enumerate without it, and all seven exhaust with unchanged verdicts. The case that
can fail in the damaging direction is the one where a polytope exists, since an
empty verdict is reproduced by a pruner that discards too much as readily as by a
correct one; that case is the subtree of the realizing type in which $\PP$ is
found. Run without pruning it enumerates $9{,}216$ complete labellings in place of
$952$, and returns the same verdict: exhausted, exactly one polytope, $\PP$.

\subsection{Deciding a labelling}
\label{sec:decide}
Given a complete ordinary labelling with no wildcard, the ultraparallel weights
are \emph{determined}, not searched: $\rank\Gr\le7$ forces every
$8\times 8$ minor of $\Gr$ to vanish, and for a real symmetric matrix it suffices
to impose this on the \emph{principal} minors, since the rank of such a matrix is
the largest order of a non-vanishing principal minor; and a minor whose index set contains
exactly one not-yet-pinned dashed edge is a quadratic in that single weight.
Solving it gives at most two roots $>1$; pinning cascades until all weights are
fixed, whereupon rank and signature are checked directly. When no
single-unknown minor is available, two minors over the same pair of unknowns are
combined by a Sylvester resultant, giving a quartic in one weight and a
back-substitution for the other.

Two points about the arithmetic of the screen are worth stating. On the single-unknown path the
minor is a quadratic in one variable, its coefficients are recovered by
interpolation at three points, and its roots are obtained in closed form. At a
\emph{tangential} solution that quadratic has a double root and its discriminant
is exactly zero, so in floating point the discriminant lands within a few units in
the last place of zero with a sign determined by the order of operations, and hence
by the numbering of the facets; a test of the form $\mathrm{disc}<0$ would then
refute a realizable labelling. We therefore compare each discriminant against a
relative tolerance $10^{-10}\cdot\max(b^2,|4ac|,1)$ and treat the near-zero case as
the double root it is. The error is one-sided in the safe direction: a spurious
candidate costs an exact certification that rejects it. Tangency is not a corner
case here---it is where the rank variety touches the constraint, which is to say
where rigid configurations live.

Note what Proposition~\ref{prop:plainexact} adds to this. Every rejection the
screen issues is required to carry an exact certificate, so a rejection the screen
gets wrong cannot pass unnoticed: it would be a labelling for which no certificate
exists, and it would be reported as such rather than silently dropped. The
discriminant tolerance above therefore affects how often the screen is useful, not
whether a verdict is right.

The pair-resultant path is weaker. Its quartic is recovered by least-squares
fitting through six sampled determinants and the degenerate,
identically-vanishing case is detected by a magnitude test; both operations are
scale-sensitive, so a refutation from that path is not reliable. We therefore trust
a refutation only when the pair-resultant path was not used, and otherwise treat
the outcome as inconclusive and fall back. This is strictly conservative, and by
Proposition~\ref{prop:cascade} it cannot alter any verdict in dimension $6$,
where that path is never taken.

If the cascade stalls, the code falls back to a numerical search: multistart
L-BFGS-B minimisation of the sum of squares of the three smallest singular
values of $\Gr$ over the box $x_e\in[1.001,1000]$, rejecting the labelling when
the attained value exceeds $10^{-6}$. This is the only bounded range of
ultraparallel weights anywhere in the cascade, and it is never reached.

\begin{proposition}[the cascade neither stalls nor takes the pair path]
\label{prop:cascade}
For every one of the $\NumTypes$ generated combinatorial types --- not merely the
$\NumRequired$ that reach the search:
\begin{enumerate}
\item[(i)] the dashed edges do not form a perfect matching, and
\item[(ii)] at every step of the cascade there is an $8\times8$ principal minor
      whose index set contains exactly one not-yet-pinned dashed edge.
\end{enumerate}
Consequently, in dimension $6$ the pair-resultant branch is never entered and the
cascade never runs out of pinnable dashed edges, so the bounded-box fallback is
structurally unreachable.
\end{proposition}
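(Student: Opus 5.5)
The plan is to prove both parts from the combinatorics of the dashed-edge graph $D$ of a type, using facts already established: every type has $p\ge2$ (Lemma~\ref{lem:p2}), and each type is recorded by its missing-face hypergraph together with its vertex list. Part~(i) I would settle by a counting argument. A perfect matching on $10$ facets needs $p=5$, which already excludes every type with $p\ne5$ by the distribution listed after Proposition~\ref{prop:387}. That leaves the ten types with $p=5$. For each of these I would check directly whether the five dashed pairs are pairwise disjoint. This is a finite check over $10$ types, to be read off the generator output. Part~(i) matters for (ii) because it guarantees that some facet $f_0$ lies in no dashed edge, so $f_0$ can serve as a fixed unpinned-free anchor in the index sets constructed below.

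For part~(ii) the central observation is that an $8\times8$ principal minor is indexed by an $8$-subset $J\subseteq[10]$. Equivalently, $J$ is specified by the $2$-subset $[10]\setminus J$ that it omits. The unknowns entering the minor are exactly the weights $x_e$ with $e\in D$ and $e\subseteq J$. Suppose $U\subseteq D$ is the set of not-yet-pinned dashed edges at some step, and fix any $e=\{a,b\}\in U$. I need two omitted facets $c,d\notin\{a,b\}$ that together meet every edge of $U\setminus\{e\}$, that is, a vertex cover of size at most $2$ of $U\setminus\{e\}$ avoiding $a$ and $b$. Since the pinning order is chosen adaptively, it suffices to show that for every nonempty $U\subseteq D$ some $e\in U$ admits such a cover.

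The main step is a case analysis on the shape of $D$. I have Lemma~\ref{lem:degree} (maximum degree at most $2$) only for the $\NumAfterDegMF$ types that survive it, and the proposition is claimed for all $\NumTypes$ types. So the plan is, first, to give a structural argument for $|U|\le3$. In that case $U\setminus\{e\}$ has at most two edges, and I can cover each by choosing one endpoint outside $\{a,b\}$. This works unless an edge of $U\setminus\{e\}$ is contained in $\{a,b\}$, which is impossible, or unless both of its endpoints lie in $\{a,b\}$, which is also impossible. Second, for larger $U$, I would exhaustively verify over all $\NumTypes$ types and all subsets $U\subseteq D$, at most $2^6$ per type, that a suitable $e$ and cover exist.

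I expect the obstacle to be the large-$U$ configurations with $p=5,6$. There $U\setminus\{e\}$ can contain three or more pairwise disjoint edges, and then no $2$-cover exists. If such a configuration arises, the fallback is to note that the cascade need not start from an arbitrary $U$. It starts from $U=D$, and each pin shrinks $U$ by one, so only a single chain of subsets has to be exhibited per type. An edge $e$ meeting the other dashed edges heavily, for example the middle of a path or a vertex of high dashed degree, makes $U\setminus\{e\}$ easy to cover. Part~(i) is exactly what rules out the bad case of five disjoint edges, where every choice of $e$ leaves four disjoint edges. The consequences stated in the proposition then follow: a single-unknown minor always exists, so neither the resultant path nor the L-BFGS fallback is ever invoked.
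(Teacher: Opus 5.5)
Your reduction is the paper's: both parts become a finite, label-independent check on the dashed-edge set $D$ of each type. Your reformulation of (ii) is exactly the criterion the paper inspects: an $8$-set is the complement of two omitted facets $c,d$, so one needs $\{c,d\}$ disjoint from $e$ and meeting every other unpinned dashed edge. The one structural difference is that you demand a good edge in \emph{every} nonempty $U\subseteq D$, which makes (ii) independent of the order in which the cascade pins. The paper instead verifies the initial step and relies on the cascade's progress being a static function of $D$. Your argument for $|U|\le 3$ is correct.

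Two things need repair.

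First, the fallback in your last paragraph is not valid. Exhibiting one chain $D=U_0\supset U_1\supset\cdots$ shows only that \emph{some} pinning order never stalls. Statement (ii) is about the order the cascade actually uses, which you do not control; that is why you needed all $U$ in the first place. To use a single chain you would have to simulate the cascade's own selection rule, as the paper does.

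The fallback is, however, never needed, and you can prove this. Every generated type is a simple $6$-polytope with $10$ facets. So a facet disjoint from $t$ others is a simple $5$-polytope with $9-t\ge 6$ facets, and every dashed degree is at most $3$ for all $387$ types. This is purely combinatorial; Lemma~\ref{lem:degree} is not needed. With that bound:
\begin{itemize}
\item A $4$-edge $U$ has a good edge unless it is four pairwise disjoint edges. If $f,g$ share $x$ and a third edge $h$ misses $x$, take $e=h$ and omit $x$ together with a point of the fourth edge off $h$.
\item A $5$-edge $U$ with a vertex $x$ of degree $3$ always has a good edge: take $e$ among the two edges missing $x$. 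With maximum degree $2$, a $5$-edge $U$ fails only if it contains four disjoint edges or is a triangle plus two disjoint edges.
\item A $6$-edge $D$ can pass at $U=D$ only if it has a vertex of degree $3$, since two vertices of degree at most $2$ cover at most four of the five remaining edges.
\end{itemize}
A short case check then shows that a $D$ with $|D|\le 6$ that passes at $U=D$ contains neither four pairwise disjoint edges nor a triangle plus two disjoint edges. So the single check at $U=D$ already implies your all-subsets condition, and the cascade cannot stall in any pinning order.

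Second, in part (i) the obstruction that matters is five pairwise disjoint dashed edges \emph{contained in} $D$. That is the configuration that forces the pair path, as for the $4$-cube, and it is why the paper examines the types with six dashed edges as well as five. Your $U=D$ check subsumes this, so nothing is lost. But your remark that (i) provides a facet lying in no dashed edge does not follow from (i) when $p=6$. Since you never use it, drop it.
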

\begin{proof}
Both assertions are finite checks on the missing-face data of the $\NumTypes$
types, and neither depends on the labels or on the weight values: which $8\times8$
principal minors exist, which dashed edges each of their index sets contains, and
which dashed edges remain unpinned are all determined by the dashed-edge set alone.
The cascade's progress is therefore decidable statically, per type.

For (i), a perfect matching would need five pairwise-disjoint dashed edges
covering all ten facets. The dashed-edge counts of the $\NumTypes$ types range over
$2,\dots,6$, and enumerating the types with five or six shows none has five
pairwise-disjoint ones.

For (ii), writing $D$ for the dashed edges, such an index set exists as soon as some
$e\in D$ can be completed to an $8$-set meeting no other dashed edge in both
endpoints, which is decided by inspection of $D$; carrying out that decision for
each of the $\NumTypes$ types finds one in every case. What makes the completion
easy to exhibit is that $D$ is small: by Lemma~\ref{lem:degree} the dashed graph has
maximum degree at most $2$, so $|D|\le 10$. The check is nevertheless performed over
all $\NumTypes$ types rather than appealing to that bound.

Since a stall of the cascade means precisely the failure of (ii), and the
pair-resultant branch is entered only after such a failure, both conclusions
follow.
\end{proof}

The single-unknown path, which is the path dimension $6$ takes, is not covered by
that proposition; it is the path whose arithmetic is discussed above.

The cascade can also return ``inconclusive'' for value-dependent reasons rather
than structural ones, and none of these was observed to fire: branch counters over
every subtree of the $\NumRequired$ searched types
record no stall of any kind and $0$ calls into the
bounded-box fallback. Appendix~\ref{app:counters} gives the taxonomy and the
measurements, including the contrasting $d=4$ figures that show the instrumentation
does register the machinery where it is reachable.

The cascade evaluates its minors in double precision, so it serves as a screen and
no emptiness verdict rests on it. A labelling it does not accept is refuted
exactly.

\begin{proposition}
\label{prop:plainexact}
Each of the $\PlainRejected$ labellings without a wildcard that the cascade does
not accept is unrealizable, and this is certified exactly: for each there is an
$8\times8$ principal minor of $\Gr$ which is a nonzero element of
$K=\QQ(\sqrt2,\sqrt3,\sqrt5)$ determined by the ordinary labels alone, so
$\rank\Gr\ge d+2$ whatever the ultraparallel weights.
\end{proposition}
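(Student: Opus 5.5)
The certificate rests on one structural fact: an $8\times8$ principal minor whose index set contains \emph{no} dashed pair involves only entries $1$ and $-\cos(\pi/m_e)$ with $m_e\in\{2,\dots,6\}$. For a wildcard-free labelling these lie in $K=\QQ(\sqrt2,\sqrt3,\sqrt5)$, since $\cos(\pi/5)=(1+\sqrt5)/4$, $\cos(\pi/6)=\sqrt3/2$ and $\cos(\pi/4)=\sqrt2/2$. Such a minor is therefore an element of $K$ determined by the ordinary labels alone. If it is nonzero, then by the remark in \S\ref{sec:decide} that the rank of a real symmetric matrix is the largest order of a nonvanishing principal minor, $\rank\Gr\ge8=d+2$ for every choice of the weights $x_e$. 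This contradicts Proposition~\ref{prop:vinberg}, which requires rank $7$, so the labelling is unrealizable. The proof is thus reduced to exhibiting, for each of the $\PlainRejected$ labellings, a dash-free $8$-subset $S\subseteq[10]$ with $\det\Gr_S\neq0$ in $K$.

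The plan has three steps.

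\emph{Step 1.} For each of the $\NumRequired$ searched types, list the $8$-subsets $S$ that contain no missing face of size $2$. Since $|D|\ge2$ and the complement of $S$ has only two elements, $S$ must omit at least one endpoint of every dashed edge. I expect such sets to exist whenever the dashed graph has a vertex cover of size $\le2$. Where no such $S$ exists, I would fall back to minors whose dashed entries are substituted by $x_e=1/t_e$ as in the abstract; but Proposition~\ref{prop:plainexact} as stated asserts a weight-free minor, so I expect the check to confirm that one always exists for the rejected labellings.

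\emph{Step 2.} For each rejected labelling, compute $\det\Gr_S$ exactly in $K$ for the candidate sets $S$, stopping at the first nonzero value. Elements of $K$ are represented as vectors of eight rational coordinates over the basis $\{\sqrt2^a\sqrt3^b\sqrt5^c\}$ with $a,b,c\in\{0,1\}$. The determinant is computed by fraction-free elimination, or by cofactor expansion, in this exact arithmetic. Testing nonvanishing is then a test that some coordinate is nonzero, so no tolerance enters anywhere.

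\emph{Step 3.} Record $(S,\det\Gr_S)$ as the certificate, so that an independent checker can recompute the determinant; I would use a computer algebra system here.

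The main obstacle is that the statement is existential over a finite list produced by the double-precision screen. There is no a priori reason why every labelling the screen rejected should have \emph{some} nonvanishing weight-free minor. A labelling might in principle be rank-deficient on every dash-free $8$-set and be excluded only through an inconsistency among the weights, for instance two pinned quadratics with no common root. The proposition then asserts something genuinely stronger, which can only be established by carrying out the computation. If some labelling failed, it would have to be passed to the interval-arithmetic route of Proposition~\ref{prop:wildexact} instead. So the honest proof is the exhaustive exact computation in Steps 1--3, together with the observation that each certificate is verifiable independently of the screen that produced it.
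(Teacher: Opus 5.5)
Your argument is the paper's own: a dash-free $8\times8$ principal minor has all its entries in $K$, so its exact nonvanishing refutes the labelling whatever the weights are, and the proposition then reduces to a finite exact computation. The one thing the paper settles structurally, rather than leaving to your Step~1, is that such a minor exists. All $545$ labellings lie in the realizing type, whose three dashed edges form a path on four facets, so exactly three $8$-subsets avoid every dashed edge (the complements of that path's three $2$-element vertex covers), and for each labelling at least one of those three determinants is checked to be nonzero.
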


\begin{proof}
Realizability requires $\rank\Gr\le d+1$, hence the vanishing of every
$(d+2)\times(d+2)$ principal minor. Suppose the index set $S$ of such a minor
contains no dashed edge, that is, no pair $e\in D$ with both endpoints in $S$.
Then $\Gr[S,S]$ has no unknown entry: its off-diagonal entries are the ordinary
values $-\cos(\pi/m)$ with $m\in\{2,\dots,6\}$, all of which lie in $K$, so its
determinant is an element of $K$ and is computed exactly by rational arithmetic on
$8$-tuples in the basis $\{\sqrt{\prod P}:P\subseteq\{2,3,5\}\}$. If that element
is nonzero the labelling is refuted outright, with no weight variable, no interval
arithmetic and no tolerance entering the decision.

Such an index set exists here. These labellings all belong to the realizing type
(\S\ref{sec:wildvalid}), whose three dashed edges form a path on four facets; that
path has exactly three $2$-element vertex covers, so exactly three $8$-subsets of
the ten facets avoid every dashed edge. For each of the $\PlainRejected$
labellings at least one of those three determinants is nonzero, which is a finite
exact computation and is carried out for each. The one labelling for which all
three vanish is the one the search accepts, namely $\PP$.
\end{proof}

The certificate is therefore independent of the tolerances the cascade uses ---
discriminants near zero, root tests, vanishing thresholds --- because it forms none
of them; in particular the tangency comparison described above, on which a false
rejection would turn, cannot affect it. The residual dependence of an emptiness
verdict on floating point lies in the forward-checking prune, not in the screen;
see \S\ref{sec:assumptions}\ref{as:screen}.

\subsection{Wildcards}
\label{sec:wildcards}
A labelling containing $\star$ on a set $W$ of edges is not decided at $m=7$.
Each wild entry becomes a continuous unknown $c_e=\cos(\pi/m_e)\in[\cos(\pi/7),1)$
and the rank conditions are solved for the $c_e$ jointly with the dashed
weights. For each edge the feasible interval of $c_e$ is converted into an
integer window of admissible $m_e$, scanned up to $m_e=100$; every feasible
integer tuple is instantiated and passed to exact certification. If any window
reaches the scan limit the run raises a diagnostic flag and the type's verdict is
not counted as rigorous. \emph{The flag never fired};
observed windows closed by $m\approx30$.

The first step is a screen, and it disposes of a large part of the search: of the
$\NumAssignments$ complete labellings enumerated, $\NumWild$ carry at least one
wildcard, and none of them reaches the integer window scan. As a screen it is a
multistart quasi-Newton minimisation of the sum of squares of
the $n-d-1$ smallest singular values of $\Gr$, over $x_e\in[1.001,1000]$ for the
dashed weights and $c_e\in[\cos(\pi/7),1)$ for the free wild entries, from $24$
starts, rejecting when no start attains $10^{-6}$.

The failure of a local search over a bounded box is not a proof of infeasibility,
and no verdict rests on one: each of those $\NumWild$ labellings is refuted by an
exact certificate.

\begin{proposition}
\label{prop:wildexact}
Each of the $\NumWild$ wildcard-bearing labellings is infeasible, and this is
certified exactly: no local search, no bound on the ultraparallel weights, and no
floating-point tolerance enters the refutation.
\end{proposition}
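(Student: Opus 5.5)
The plan follows the abstract's description: compactify the ultraparallel directions and certify, for each labelling, that one $(d+2)$-principal minor vanishes nowhere on a closed box that contains every admissible value of the unknowns. Fix one of the $\NumWild$ labellings, with dashed set $D$ (here the three edges of the realizing type, forming a path on four facets) and wild set $W\neq\emptyset$. The unknowns are $x_e=\cosh\rho_e\in(1,\infty)$ for $e\in D$ and $c_e=\cos(\pi/m_e)\in[\cos(\pi/7),1)$ for $e\in W$. Realizability requires $\rank\Gr\le d+1=7$, so every $8\times8$ principal minor must vanish at the true values. Substitute $x_e=1/t_e$ with $t_e\in(0,1)$, and multiply a minor $\det\Gr[S,S]$ by $\prod_{e\in D,\,e\subseteq S}t_e^{2}$. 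The result is a polynomial $Q_S$ in the $t_e$ and $c_e$ whose zeros on $t_e>0$ coincide with those of the minor, because each $x_e$ occurs in exactly two symmetric entries and hence with degree at most $2$ in each variable. The ordinary entries lie in $K=\QQ(\sqrt2,\sqrt3,\sqrt5)$, since the ordinary labels are in $\{2,\dots,6\}$, so $Q_S\in K[t,c]$ and can be expanded exactly.

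The domain is then enclosed in a closed rational box $B$. Take $t_e\in[0,1]$, and for $c_e$ use $[\underline c,1]$ with $\underline c$ a rational lower bound for $\cos(\pi/7)$. The box is compact and contains the whole feasible set, including the limits $x_e\to\infty$ and $m_e\to\infty$. It suffices to exhibit, for each labelling, one index set $S$ with $Q_S$ nonvanishing on $B$; then no choice of weights and wild labels gives rank $7$. I would first try the index sets that avoid all dashed edges (the three vertex-cover complements from Proposition~\ref{prop:plainexact}). For these $Q_S$ involves only the $c_e$, which is typically one or two variables. Where they fail, I would move on to sets containing one dashed edge, and then more.

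Nonvanishing is certified by outward-rounded interval arithmetic. Represent each coefficient in $K$ by a rational interval enclosure, obtained from rational enclosures of $\sqrt2,\sqrt3,\sqrt5$. Evaluate $Q_S$ on $B$ in interval arithmetic, or better in a centred or Bernstein form, and bisect adaptively until every sub-box has an enclosure excluding $0$. A finite successful subdivision is a proof that no floating-point tolerance or weight bound is involved.

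The main obstacle is sub-boxes touching the boundary $t_e=0$ or $c_e=1$. There $Q_S$ may genuinely vanish: at $t_e=0$ the leading $x_e^2$ coefficient is a $6\times6$ cofactor, which can be zero, and this would prevent bisection from terminating. My first remedy is to pick a different $S$ on such boxes, since different boxes may use different minors. If that fails, I would note that $t_e=0$ and $c_e=1$ are not in the feasible set. I would then certify sign-definiteness of $Q_S$ on a boundary sub-box from the sign of its lowest-order term in $t_e$ (or $1-c_e$), together with a bound on the remainder. This reduces to an exact check in $K$ of the vanishing order of the boundary face polynomial. The hard cases are the $\WildRootBox$ labellings where no dashed-free minor works. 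For the remaining labellings a dashed-free $S$ with $Q_S$ a univariate or constant polynomial nonzero on $[\underline c,1]$ should suffice, and is certified exactly as in Proposition~\ref{prop:plainexact}.
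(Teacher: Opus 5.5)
Your proposal is correct and follows essentially the paper's argument: the same compactification $x_e=1/t_e$ with the $t_e^{2}$ clearing, exact coefficients in $K=\QQ(\sqrt2,\sqrt3,\sqrt5)$, the closed rational box with a rational lower bound for $\cos(\pi/7)$, and an outward-rounded interval enclosure showing that a single $8\times8$ principal minor has no zero on that box. The one difference is that the bisection and boundary-order machinery you prepare turns out to be unnecessary. In the paper every one of the $406$ cases is settled on the root box by one minor: all but $19$ by a minor whose index set avoids both the dashed and the wild edges, so it is a nonzero constant in $K$, and the remaining $19$ by an enclosure that excludes $0$ with margin at least $1.1\times10^{-3}$.
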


\begin{proof}
Fix such a labelling, with dashed edges $D$ and wild edges $W$. Realizability
requires $\rank\Gr\le d+1$, hence the vanishing of \emph{every} $(d+2)\times(d+2)$
principal minor of $\Gr$, so exhibiting one principal minor that vanishes nowhere
on the domain refutes the labelling.

First compactify. Substituting $x_e = 1/t_e$ maps $x_e\in(1,\infty)$ to
$t_e\in(0,1)$. Each unknown occupies exactly the two symmetric positions of one
edge, so a principal minor is a polynomial of degree at most $2$ in each $x_e$ and
in each $c_e$; multiplying the minor on the index set $S$ by $t_e^{2}$ for each
$e\in D$ with both endpoints in $S$ therefore turns it into a polynomial in the
variables $(t_e)_{e\in D}$, $(c_e)_{e\in W}$. On the domain $t_e>0$, so this
changes no zero there; on the closed box it can only \emph{add} zeros, at
$t_e=0$, which makes a certificate harder to obtain and never makes one unsound.
The domain is contained in the closed rational box
\[
t_e\in[0,1]\quad(e\in D),\qquad c_e\in[c_7,1]\quad(e\in W),
\]
where $c_7$ is any rational lower bound for $\cos(\pi/7)$. Both enlargements ---
closing the box and lowering $\cos(\pi/7)$ to a rational --- only \emph{grow} the
domain, so emptiness on the box implies emptiness on the domain. The unbounded
direction has been removed rather than truncated, and no bound on $x_e$ is
assumed.

Next, the coefficients are exact. The ordinary entries $-\cos(\pi/m)$ for
$m\in\{2,\dots,6\}$ lie in $K=\QQ(\sqrt2,\sqrt3,\sqrt5)$, hence so do the
coefficients of every minor. For a minor with $r$ unknowns, which has degree at
most $2$ in each, those coefficients are recovered by tensor-product interpolation
from $3^{r}$ determinant evaluations over $K$ at integer nodes; $K$ is a field of
degree $8$ over $\QQ$ with the basis $\{\sqrt{\prod S}: S\subseteq\{2,3,5\}\}$, and
all of this is rational arithmetic on $8$-tuples.

Finally, emptiness on the box is certified by interval arithmetic. Enclosing each
coefficient by rationals and evaluating with \emph{outward}-rounded rational
intervals yields an enclosure of the polynomial's range over the box that is
guaranteed to be a superset of the true range; if that enclosure excludes $0$, the
box contains no zero, and the conclusion is a proof rather than an estimate. In
each of the $\NumWild$ cases a single $(d+2)$-principal minor settles the matter on
the root box, with no subdivision required. All but $\WildRootBox$ of them are
settled without interval arithmetic at all: the minor's index set contains neither a
dashed nor a wild edge, so the minor is a nonzero element of $K$ and the ordinary
labels alone force $\rank\Gr\ge d+2$. For the remaining $\WildRootBox$ the minor's
range over the whole box excludes $0$, in every case by a margin of at least
$\WildMinMargin$.
\end{proof}

Two things are worth drawing out. The certificate covers every integer $m_e\ge 7$
at once, because the wild entries enter only through
$c_e=\cos(\pi/m_e)\in[\cos(\pi/7),1)$, which is a \emph{compact} interval, and the
integer scan happens only after feasibility: infeasibility of the continuous
relaxation is already infeasibility for every admissible integer label, so no
unbounded quantifier over labels arises anywhere. And the certificate makes no use
of the box $[1.001,1000]$, so with Proposition~\ref{prop:cascade} no verdict reported
in this paper depends on a bounded range of ultraparallel weights, for labellings
with a wildcard as well as without.

A certificate of this kind can only ever refute; the property the $d=6$
conclusion needs is that it never refutes something realizable. That is
one-sidedness, and it is measured against ground truth on the $d=5$ census, on the
instances the screen itself saw.

Three counts have to be kept apart there. Of the $\DFiveWildCensus$
wildcard-bearing assignments in that census, the joint-feasibility step passes
$\DFiveWildScreenPass$; of those, $\DFiveWildSurvived$ become polytopes ---
exactly the $d=5$ polytopes carrying a label outside $\{2,\dots,6\}$, the nine with
a dihedral angle $\pi/10$ --- while the other
$\DFiveWildScreenPass-\DFiveWildSurvived$ are feasible for the \emph{continuous}
relaxation and are eliminated by the integer window scan, no admissible $m_e\ge7$
existing for them. Since the certificate refutes the continuous relaxation, it must
leave both groups undecided, so the statement to test is about all
$\DFiveWildScreenPass$ instances that survive the screen and not only the
$\DFiveWildSurvived$ that realize.

The certifier is run on the three types of that census in which every screen-passer
occurs: on all $\DFiveWildScreenPass$ instances that survive the screen anywhere in
the census, which is what the one-sided claim needs --- those are the only instances
a false refutation could damage --- together with $\DFiveWildJointKilled$ of their
joint-step rejections, sampled uniformly at a fixed seed. It refutes
$\DFiveWildPassRefuted$ of the survivors and $\DFiveWildCert$ of the
$\DFiveWildJointKilled$ rejections. So the refutation is one-sided in the safe
direction wherever the answer is known, and on the other side it is not weak: it
settles every rejection put to it. That also disposes of the concern that a blanket
rejector would produce the same $d=6$ output as a correct one.

The calibration exposes one limitation of the method: a continuous-relaxation
certificate cannot by itself close a labelling that is continuously feasible and
integrally infeasible, as those
$\DFiveWildScreenPass-\DFiveWildSurvived$ instances are. In dimension $6$ that
limitation is not reached, every one of the $\NumWild$ wildcard-bearing labellings
being refuted outright, but it is a feature of the method rather than an artefact
of these instances.

\subsection{Exact certification of an accepted labelling}
\label{sec:exactcert}
A labelling is accepted only after all of the following succeed.
\begin{enumerate}
\item Gauss--Newton refinement of the dashed weights at $100$ decimal digits on
      the overdetermined system of all $8\times8$ principal minors of $\Gr$
      meeting a dashed edge, with the ordinary entries evaluated as exact
      $-\cos(\pi/m)$ in \texttt{mpmath}; the final residual must be below
      $10^{-50}$.
\item Signature $(6,1)$ with exactly three zero eigenvalues at $100$ digits.
\item Local isolation: the Jacobian of the kernel conditions with respect to the
      dashed weights has full rank $k$. This rules out points on a
      positive-dimensional spurious component, on which the minor equations
      vanish identically and Gauss--Newton would converge to an arbitrary point.
\item No parabolic subdiagram (compactness).
\item An independent \texttt{CoxIter} \cite{Guglielmetti2017,GuglielmettiLMS2015}
      check certifying cocompactness and dimension $6$.
\end{enumerate}
The minimal polynomial of each weight is then recovered by integer relation
detection, giving exact algebraic data for every accepted weight.

\subsection{Exhaustion certificates}
\label{sec:exhaust}
The enumerator reports ``exhausted'' for a run only when every
branch was explored: no enumeration timeout, no cap on the number of
assignments, no break on the solve budget, and no wildcard deadline. A type
whose root run does not exhaust is split by fixing the first $j$ labels of the
deterministic enumeration order, producing six disjoint subtrees, recursively,
to depth at most $14$. Since the six labels partition the alphabet at each
position, the subtrees of a node partition its search space exactly, so a type's
verdict is rigorous precisely when its refinement tree is covered by exhausted
leaves. Every subtree result is checkpointed. In the event no type needs
refinement at all: every one of the $\NumRequired$ searched types is exhausted at
the root of its own label tree, so the refinement depth reached is $\MaxDepth$
against a cap of $14$, and the coverage recursion --- re-run independently over the
checkpoint file --- confirms all $\NumRequired$ types covered with no errors
recorded. The partitioning machinery is described because a type that did not
exhaust would need it, and because it is what the $d=4$ and $d=5$ censuses use.

\section{The classification theorem}
\label{sec:polytope}

\begin{theorem}
\label{thm:search}
Of the $\NumRequired$ combinatorial types that survive the restrictions of
\S\ref{sec:completeness}, exactly one---the type with $p=\RealizingP$ and
missing-face profile $\RealizingProfile$---admits a compact Coxeter structure, and
it admits exactly one up to isometry. The other $\NumRequiredLess$ are rigorous
zeros by exhaustion.
\end{theorem}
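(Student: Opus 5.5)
The proof assembles the search machinery of \S\ref{sec:search} into a per-type argument. For each of the $\NumRequired$ surviving types $T$, the backtracking enumeration over the alphabet $\{2,3,4,5,6,\star\}$ is sound: by Proposition~\ref{prop:mz35} and Lemma~\ref{lem:wildcard}, every Coxeter labelling with arbitrary integer labels $m_e\ge2$ is represented by exactly one word over this alphabet. The forward-checking gates are exact by Proposition~\ref{prop:exactgates}, so a branch is pruned only when some face of $T$ fails to be elliptic, or some missing face of size $\ge3$ fails to be Lann\'er. By Proposition~\ref{prop:faces} both failures are genuine obstructions. Symmetry breaking discards a branch only when its canonical image under $\mathrm{Aut}(T)$ is scheduled, so no isometry class is lost. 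Finally, the exhaustion certificates of \S\ref{sec:exhaust} show that every type is exhausted at the root. Hence every labelling of every type that could carry a compact Coxeter structure reaches the deciding screen.

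Next I would show that for the ten non-realizing types the set of labellings reaching a screen is empty. This is a reported outcome of the enumeration, and it is exact because the gates are exact. Those ten types are therefore rigorous zeros with no appeal to the screen, to floating point, or to any bound on the weights.

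For the realizing type, with $p=\RealizingP$ and profile $\RealizingProfile$, I split the $\NumAssignments$ complete labellings into two classes. The first class consists of the $\PlainRejected$ wildcard-free labellings the cascade rejects; these are refuted by Proposition~\ref{prop:plainexact}. Its key input is that the dashed path on four facets leaves exactly three $8$-subsets of facets avoiding every dashed edge, and that one of their determinants is a nonzero element of $K$. The second class consists of the $\NumWild$ wildcard-bearing labellings, which Proposition~\ref{prop:wildexact} refutes via the compactification $x_e=1/t_e$ and outward-rounded interval enclosures. This leaves a single labelling. For it I would show that the three dashed-free $8\times8$ minors vanish and that the cascade, which by Proposition~\ref{prop:cascade} always finds single-unknown minors, pins the weights to finitely many values. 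The certification of \S\ref{sec:exactcert} then applies: a signature $(6,1)$ with a three-dimensional kernel, local isolation, absence of parabolic subdiagrams, and an independent \texttt{CoxIter} confirmation. This yields existence, and by Vinberg's theory a compact Coxeter polytope, identified with $\PP$ by comparison with \cite[Fig.~5]{Burcroff2024}.

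The main obstacle is uniqueness up to isometry within the realizing type. A Coxeter polytope is determined up to isometry by its Gram matrix, so it suffices that the surviving labelling admits only one admissible weight vector $(x_e)_{e\in D}$. The cascade can return up to two roots per quadratic, and tangential double roots are handled only with a tolerance. I would therefore argue uniqueness exactly. Recover the minimal polynomials of the weights, then show that every other root $>1$ of the pinning quadratics violates some further $8\times8$ principal minor, checked exactly over the extension of $K$ generated by those roots. Together with local isolation, this rules out both other isolated solutions and positive-dimensional families. Distinct labellings in the same $\mathrm{Aut}(T)$-orbit give isometric polytopes, so after symmetry breaking exactly one isometry class remains. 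The unpruned rerun producing $\PP$ alone serves as the check that the symmetry breaking discarded no other orbit.
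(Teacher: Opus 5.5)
Your assembly matches the paper's. Exact forward checking (Proposition~\ref{prop:exactgates}) empties the ten non-realizing types before any labelling reaches a screen. Inside the realizing type, the $\PlainRejected$ wildcard-free rejections are refuted by Proposition~\ref{prop:plainexact} and the $\NumWild$ wildcard-bearing ones by Proposition~\ref{prop:wildexact}, leaving the single labelling certified in \S\ref{sec:exactcert}. Your list of pruning reasons omits the low-weight caps of Lemma~\ref{lem:burcroff55b}; they are sound, but they should be cited.

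You genuinely differ on uniqueness of the weights. The paper does not establish this exactly. It reads off from the cascade that the accepted labelling's weight tree has one leaf, i.e.\ every pinning quadratic had a single admissible root. It then counts the orbit: $|\mathrm{Aut}(T)|=24$ with a stabiliser of order $2$ gives $12$ label assignments, exactly what the search returns with symmetry breaking off, all yielding one Gram matrix up to permutation. The single-leaf count passes through the double-precision cascade and its discriminant tolerance (\S\ref{sec:decide}). Your plan computes the pinning quadratics exactly over $K$ and its quadratic extensions, and shows that every other root $>1$ violates a further principal minor. That makes this step rigorous where the paper's is not. The paper's route in turn gives the explicit $24/2=12$ orbit count as a concrete cross-check on the symmetry breaking, which your plan only gestures at.

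One inference inside your step needs repair. Proposition~\ref{prop:cascade} is structural: it guarantees a principal minor whose index set has one unpinned dashed edge, not that this minor is a nonzero polynomial in that unknown. Local isolation, in turn, only rules out a positive-dimensional component through the accepted point. To conclude that the whole solution set is finite, your exact computation must also verify that each pinning quadratic is not identically zero along every branch of the tree. Appendix~\ref{app:counters} notes that identical vanishing can occur on the single-unknown path.
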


Together with Lemmas~\ref{lem:degree}, \ref{lem:mfsix}, \ref{lem:prism}
and~\ref{lem:facet}, this
gives Theorem~\ref{thm:main}.

The run screened $\NumAssignments$ labellings to
zero in $\CPUHours$ CPU-hours, across $\NumSubtrees$ certified subtree results ---
one per searched type, each exhausted at its root, so no type required refinement.

Two steps separate ``exactly one labelling is realizable'' from the uniqueness
claim in Theorem~\ref{thm:search}, and neither is covered by the refutation
certificates, so we report both.

First, a labelling need not determine its ultraparallel weights: each pinning step
solves a quadratic and admits up to two roots $>1$, so the weight tree of one
labelling may have several leaves and several could pass. For the accepted labelling
it has exactly $\WeightLeaves$ leaf, which is to say every pinning step had a single
admissible root, and that leaf is the one certified in \S\ref{sec:exactcert}. So the
labelling determines its Gram matrix outright.

Second, several labellings of a type can describe the same polytope, namely the
orbit of one under the combinatorial automorphism group $\mathrm{Aut}(T)$. Here
$|\mathrm{Aut}(T)|=\AutTypeOrder$ and the subgroup preserving the accepted labelling
has order $\AutStabOrder$, so the orbit consists of $\NumGramConfigs$ label
assignments --- exactly the $\NumGramConfigs$ the search returns when symmetry
breaking is switched off, and they give one Gram matrix up to simultaneous
permutation of rows and columns. A compact hyperbolic Coxeter polytope is determined
up to isometry of $\HH^6$ by its Gram matrix up to simultaneous permutation, so the
type realizes exactly one polytope up to isometry. The subgroup of order
$\AutStabOrder$ is the symmetry of the polytope's own labelled diagram.

The realizing polytope is not new: it is the one drawn as Figure~5 of
\cite{Burcroff2024}, attributed there to Bugaenko \cite{Bugaenko1984}, and we
verified that the polytope our search returns is isomorphic to that figure edge
for edge and label for label. We therefore refer to that figure rather than
redrawing it. Its Gram matrix, the automorphism computation behind ``exactly one
up to isometry'', and its position relative to the Felikson--Tumarkin class are
recorded with the data of \cite{Repo}, where the Gram matrix is also checked in
one command, independently of the search.

Table~\ref{tab:pertype} lists all $\NumSurvivors$ combinatorial types surviving
the facet filter of Lemma~\ref{lem:facet}, with the number $p$ of disjoint facet
pairs, the missing-face size profile, the verdict, the method by which it was
reached, the number of exhaustion certificates produced, the number of Coxeter
labellings enumerated, and the CPU time. Type identifiers refer to
the type list of \cite{Repo}. Every row is rigorous:
either a theorem or a fully covered refinement tree.

Two conventions in reading the table. First, ``labellings'' counts \emph{complete}
Coxeter labellings that reached the screen, so an entry of $0$ means
forward-checking pruned every branch before completion. It does not mean the type
was cheap: a type can be emptied by the prune and still cost hours, the cost being
the size of the tree explored before every branch died rather than the number of
branches that survived. Second, a bullet in the ``req.'' column marks the
$\NumRequired$ types that survive every restriction of \S\ref{sec:completeness} and
are therefore decided by search; the realizing type is among them. The remaining
rows are decided by a theorem, and carry no subtree or labelling counts.

{\small
\begin{longtable}{r r l c c l r r r}
\caption{The $\NumSurvivors$ surviving combinatorial types and their
verdicts.\label{tab:pertype}}\\
\toprule
type & $p$ & missing-face profile & req. & \#polytopes & method & subtrees &
labellings & CPU\,h\\
\midrule
\endfirsthead
\multicolumn{9}{l}{\small\emph{Table \ref{tab:pertype}, continued}}\\
\toprule
type & $p$ & missing-face profile & req. & \#polytopes & method & subtrees &
labellings & CPU\,h\\
\midrule
\endhead
\midrule
\multicolumn{9}{r}{\small\emph{continued on next page}}\\
\endfoot
\bottomrule
\endlastfoot
8 & 3 & $2^{3}3^{2}4^{2}5^{2}$ & $\bullet$ & 0 & search & 1 & 0 & 0.02 \\
12 & 3 & $2^{3}3^{2}4^{2}5^{2}$ & $\bullet$ & 0 & search & 1 & 0 & 0.01 \\
17 & 3 & $2^{3}3^{1}4^{4}5^{1}$ &  & 0 & Lem.\,\ref{lem:prism} & 0 & 0 & 0.00 \\
34 & 5 & $2^{5}3^{1}5^{1}$ &  & 0 & Lem.\,\ref{lem:degree} & 0 & 0 & 0.00 \\
36 & 4 & $2^{4}3^{1}4^{2}5^{1}$ &  & 0 & Lem.\,\ref{lem:degree} & 0 & 0 & 0.00 \\
38 & 4 & $2^{4}3^{2}5^{2}$ &  & 0 & Lem.\,\ref{lem:degree} & 0 & 0 & 0.00 \\
40 & 4 & $2^{4}3^{2}5^{2}$ &  & 0 & Lem.\,\ref{lem:degree} & 0 & 0 & 0.00 \\
51 & 4 & $2^{4}3^{1}4^{2}5^{1}$ &  & 0 & Lem.\,\ref{lem:degree} & 0 & 0 & 0.00 \\
55 & 5 & $2^{5}3^{1}5^{1}$ &  & 0 & Lem.\,\ref{lem:degree} & 0 & 0 & 0.00 \\
59 & 4 & $2^{4}3^{1}4^{2}5^{1}$ &  & 0 & Lem.\,\ref{lem:degree} & 0 & 0 & 0.00 \\
60 & 3 & $2^{3}3^{2}4^{2}5^{2}$ &  & 0 & Lem.\,\ref{lem:prism} & 0 & 0 & 0.00 \\
61 & 4 & $2^{4}3^{1}4^{2}5^{1}$ &  & 0 & Lem.\,\ref{lem:mfsix} & 0 & 0 & 0.00 \\
69 & 3 & $2^{3}3^{2}4^{2}5^{2}$ &  & 0 & Lem.\,\ref{lem:prism} & 0 & 0 & 0.00 \\
70 & 4 & $2^{4}3^{2}5^{2}$ &  & 0 & Lem.\,\ref{lem:degree} & 0 & 0 & 0.00 \\
92 & 5 & $2^{5}4^{2}$ &  & 0 & Lem.\,\ref{lem:degree} & 0 & 0 & 0.00 \\
103 & 5 & $2^{5}3^{1}5^{1}$ &  & 0 & Lem.\,\ref{lem:degree} & 0 & 0 & 0.00 \\
120 & 4 & $2^{4}3^{2}5^{2}$ &  & 0 & Lem.\,\ref{lem:degree} & 0 & 0 & 0.00 \\
127 & 3 & $2^{3}3^{3}5^{3}$ & $\bullet$ & 0 & search & 1 & 0 & 0.20 \\
132 & 5 & $2^{5}3^{1}5^{1}$ &  & 0 & Lem.\,\ref{lem:degree} & 0 & 0 & 0.00 \\
140 & 4 & $2^{4}3^{1}4^{2}5^{1}$ &  & 0 & Lem.\,\ref{lem:degree} & 0 & 0 & 0.00 \\
154 & 4 & $2^{4}3^{1}4^{2}5^{1}$ &  & 0 & Lem.\,\ref{lem:degree} & 0 & 0 & 0.00 \\
159 & 6 & $2^{6}$ &  & 0 & Lem.\,\ref{lem:degree} & 0 & 0 & 0.00 \\
162 & 3 & $2^{3}3^{1}4^{4}5^{1}$ &  & 0 & Lem.\,\ref{lem:prism} & 0 & 0 & 0.00 \\
168 & 3 & $2^{3}3^{1}4^{4}5^{1}$ &  & 0 & Lem.\,\ref{lem:prism} & 0 & 0 & 0.00 \\
173 & 3 & $2^{3}3^{3}5^{3}$ & $\bullet$ & 0 & search & 1 & 0 & 0.32 \\
206 & 4 & $2^{4}3^{2}5^{2}$ &  & 0 & Lem.\,\ref{lem:mfsix} & 0 & 0 & 0.00 \\
214 & 3 & $2^{3}3^{2}4^{2}5^{2}$ &  & 0 & Lem.\,\ref{lem:prism} & 0 & 0 & 0.00 \\
218 & 5 & $2^{5}4^{2}$ &  & 0 & Lem.\,\ref{lem:degree} & 0 & 0 & 0.00 \\
220 & 3 & $2^{3}3^{2}4^{2}5^{2}$ & $\bullet$ & 0 & search & 1 & 0 & 0.02 \\
229 & 4 & $2^{4}3^{2}5^{2}$ &  & 0 & Lem.\,\ref{lem:degree} & 0 & 0 & 0.00 \\
234 & 4 & $2^{4}3^{2}5^{2}$ &  & 0 & Lem.\,\ref{lem:degree} & 0 & 0 & 0.00 \\
239 & 4 & $2^{4}4^{4}$ &  & 0 & Lem.\,\ref{lem:degree} & 0 & 0 & 0.00 \\
255 & 3 & $2^{3}3^{2}4^{2}5^{2}$ & $\bullet$ & 0 & search & 1 & 0 & 0.00 \\
265 & 4 & $2^{4}3^{1}4^{2}5^{1}$ &  & 0 & Lem.\,\ref{lem:mfsix} & 0 & 0 & 0.00 \\
273 & 3 & $2^{3}3^{1}4^{4}5^{1}$ &  & 0 & Lem.\,\ref{lem:prism} & 0 & 0 & 0.00 \\
284 & 4 & $2^{4}3^{2}5^{2}$ &  & 0 & Lem.\,\ref{lem:mfsix} & 0 & 0 & 0.00 \\
286 & 5 & $2^{5}3^{1}5^{1}$ &  & 0 & Lem.\,\ref{lem:degree} & 0 & 0 & 0.00 \\
287 & 3 & $2^{3}3^{3}5^{3}$ & $\bullet$ & 0 & search & 1 & 0 & 0.04 \\
295 & 3 & $2^{3}3^{2}4^{2}5^{2}$ &  & 0 & Lem.\,\ref{lem:prism} & 0 & 0 & 0.00 \\
297 & 5 & $2^{5}4^{2}$ &  & 0 & Lem.\,\ref{lem:degree} & 0 & 0 & 0.00 \\
308 & 4 & $2^{4}4^{4}$ &  & 0 & Lem.\,\ref{lem:mfsix} & 0 & 0 & 0.00 \\
315 & 4 & $2^{4}3^{2}5^{2}$ &  & 0 & Lem.\,\ref{lem:mfsix} & 0 & 0 & 0.00 \\
317 & 3 & $2^{3}3^{2}4^{2}5^{2}$ &  & 0 & Lem.\,\ref{lem:prism} & 0 & 0 & 0.00 \\
320 & 3 & $2^{3}3^{3}5^{3}$ &  & 0 & Lem.\,\ref{lem:prism} & 0 & 0 & 0.00 \\
329 & 6 & $2^{6}$ &  & 0 & Lem.\,\ref{lem:degree} & 0 & 0 & 0.00 \\
332 & 3 & $2^{3}3^{2}4^{2}5^{2}$ & $\bullet$ & 0 & search & 1 & 0 & 0.01 \\
344 & 3 & $2^{3}4^{6}$ &  & 0 & Lem.\,\ref{lem:prism} & 0 & 0 & 0.00 \\
352 & 3 & $2^{3}3^{3}5^{3}$ & $\bullet$ & 0 & search & 1 & 0 & 0.00 \\
354 & 4 & $2^{4}4^{4}$ &  & 0 & Lem.\,\ref{lem:degree} & 0 & 0 & 0.00 \\
356 & 3 & $2^{3}3^{2}4^{2}5^{2}$ &  & 0 & Lem.\,\ref{lem:prism} & 0 & 0 & 0.00 \\
360 & 3 & $2^{3}3^{1}4^{4}5^{1}$ &  & 0 & Lem.\,\ref{lem:prism} & 0 & 0 & 0.00 \\
378 & 4 & $2^{4}4^{4}$ &  & 0 & Lem.\,\ref{lem:mfsix} & 0 & 0 & 0.00 \\
379 & 3 & $2^{3}3^{3}5^{3}$ & $\bullet$ & \textbf{1} & search & 1 & 952 & 0.27 \\
382 & 3 & $2^{3}3^{3}5^{3}$ & $\bullet$ & 0 & search & 1 & 0 & 0.05 \\

\end{longtable}
}

\section{Verification and reproducibility}
\label{sec:verification}

\subsection{The $d=5$ census, reproduced through the identical code path}
\label{sec:d5valid}

\emph{This is the anchor of record for the correctness of the code.} It is the
one validation that exercises the same functions, the same flags and the same
acceptance test as the $d=6$ classification, on a published census of known
value; where the two disagree, the code is wrong. The $d=4$ comparison of
\S\ref{sec:d4valid} plays a different and weaker role, for reasons given there.
It is also the strongest evidence we can offer that the search does not lose
polytopes. All $\DFiveTypes$ combinatorial types of simple $5$-polytopes with
$9$ facets and $p\ge2$ were run through the same per-type driver, in the same wildcard mode and with the
same acceptance test, and the outcome is Ma--Zheng's census exactly:
\[
\DFiveRealizing,\qquad\text{total } \DFiveTotal ,
\]
with every one of the $\DFiveTypes$ types reporting exhausted and no
window reaching the scan limit. The census is demanding in exactly the ways this
pipeline can fail: it contains polytopes with high-degree algebraic
weights, prism-glued families, and dihedral angles $\pi/10$---and the $\pi/10$
angles are \emph{discovered} by the wildcard range analysis rather than assumed
in any alphabet. Reproduce with the $d=5$ validation driver.

The validation was then repeated with the low-weight caps of
Lemma~\ref{lem:burcroff55b} additionally enabled---the one solver flag the $d=6$
run sets that the baseline $d=5$ validation did not exercise---and reproduces
the census again: $\DFiveTotal$ polytopes, the same six realizing types with the
same per-type counts, all $\DFiveTypes$ types exhausted, no
window reaching the scan limit, in $\DFiveCPUHours$ CPU-hours. The $d=5$ anchor therefore
exercises exactly the flag combination used for the $d=6$ classification.

The implementation of the facet filter is validated in the same dimension. Its
$d=5$ analogue uses the completed $d=4$ classification: a facet of a $5$-polytope
with $9$ facets meeting all eight others is a compact Coxeter $4$-polytope with $8$
facets, and $14$ of those $30$ combinatorial types realize one, between them
exhibiting ten distinct missing-face profiles. Requiring every such facet to carry
one of those ten profiles discards $48$ of the $109$ $d=5$ types and \emph{none} of
the six that realize a polytope. It also retains $55$ types that realize nothing,
which is the one-sidedness asserted above, here measured rather than assumed. The
filter is markedly weaker in dimension $5$ than in dimension $6$---it removes $44\%$
of the types rather than $86\%$---because the classification it consults is
correspondingly smaller.

\subsection{The $d=4$ census, reproduced exactly}
\label{sec:d4valid}

The same code path reproduces the second completed census in this family. Our
implementation returns all $\DFourTarget$ compact hyperbolic Coxeter
$4$-polytopes with $8$ facets: the $30$ combinatorial types are reproduced
exactly, all $30$ are certified exhausted by the coverage recursion of
\S\ref{sec:exhaust} over $90$ subtree results and $17.4$ CPU-hours, and the $14$
realizing types agree \emph{type by type} with \cite[Appendix~A]{Burcroff2024},
whose $34$ rows---the $14$ realizing types together with $20$ realizing
none---sum to $\DFourTarget$ in agreement with her abstract and with
\cite[Thm.~1.1]{MaZheng2024}. The types are matched to her $G$-types by exact
missing-face hypergraph isomorphism, with no discrepancy.

Dimension $4$ is a second anchor alongside \S\ref{sec:d5valid}, and we are
explicit about what it does and does not cover. It exercises the pair-resultant
branch of the cascade heavily---$132{,}571$ uses---which
Proposition~\ref{prop:cascade} shows is never entered in dimension $6$, so that
part of the evidence is not transferable. It also contains a type with no
dimension-$6$ analogue at all: $G_4$ is the $3$-free type, the $4$-cube, and by
Lemma~\ref{lem:esselmann} a $3$-free compact Coxeter $6$-polytope needs at least
$12$ facets. Burcroff attributes the twelve polytopes of that type to Jacquemet
and Tschantz \cite{JacquemetTschantz2018}.

Two features of Ma--Zheng's published intermediate data corroborate assumptions
made here rather than results. Their candidate label vectors use exactly the
alphabet $\{2,3,4,5,6,7\}$ with $7$ standing for ``any $m\ge7$'', an independent
implementation of the truncation of Proposition~\ref{prop:mz35} and of our
wildcard convention. And every polytope we find in the type they call $P_{8,17}$
appears in their candidate list for it, each once up to that type's order-$8$
automorphism group, so the two enumerations agree where they overlap.

\subsection{The exact refutation certificates, and what they were tested against}
\label{sec:wildvalid}
Propositions~\ref{prop:plainexact} and~\ref{prop:wildexact} are computations, so
they need the same treatment as the rest of the pipeline: the instances they refute
must be the instances the search screened, and their arithmetic must be checked
against something independent.

The instances are not re-derived. The classification driver writes every labelling
that reaches a screen to an instance sink --- its ordinary labels, dashed edges and
wild edges, together with the verdict it was rejected on: the numerical residual
for a wildcard-bearing labelling, the cascade's decision for the others --- and the
certifiers read that dump, so the labellings certified are the labellings screened.
The dump agrees with the classification's own counters
($\NumAssignments$ labellings, $\NumWild$ of them wildcard-bearing and $\NumPlain$
not, one realizer, exhausted), and carries a completion marker recording its record
count, so a partial dump cannot be mistaken for a complete one. In dimension $6$
all $\NumAssignments$ complete labellings arise inside a single subtree of the
realizing type; the other subtrees are emptied by forward checking before any
complete labelling is reached.

Four checks bound on the certifiers themselves.
\begin{enumerate}
\item \emph{The field arithmetic.} Multiplication, inversion by conjugates and
      determinants over $K$ are checked against independent computations, and each
      tabulated entry $-\cos(\pi/m)$ against its floating-point value.
\item \emph{The minor polynomials.} For randomly generated labellings of the same
      shape, every interpolated minor is compared against a NumPy determinant of
      the corresponding submatrix at random points of the domain --- the check that
      the polynomial being certified really is the minor. A wrong polynomial is the
      one failure mode that would refute realizable labellings silently.
\item \emph{The enclosures.} Sampled values of each minor must lie inside the
      computed range, and each recorded certificate in the artifact is
      re-checked against a floating-point recomputation of the minor it names.
\item \emph{$\PP$ itself.} Run on the realizing labelling, with its three dashed
      weights left unknown, the certifier must fail to refute it, and does: the
      branch and bound localises rather than excludes. Sharper, the three minors
      that carry every certificate of Proposition~\ref{prop:plainexact} --- the ones
      whose index sets avoid all three dashed edges --- vanish \emph{identically} in
      $K$ for this labelling, so no escalation of the search could refute it either.
      Over the whole search the same property is a census rather than a spot check:
      of the $\NumAssignments$ labellings dumped, exactly one is left unrefuted, and
      it is the one the search accepts.
\end{enumerate}
The calibration against the $d=5$ census is reported in \S\ref{sec:wildcards}:
$\DFiveWildCert$ of $\DFiveWildJointKilled$ sampled joint-step rejections
certified, and none of the $\DFiveWildScreenPass$ instances that survive that step
refuted.

\subsection{Independent checks on the polytope}
\label{sec:checks}
The Gram matrix of the realizing type was verified three ways: (i) exact inertia
over $\QQ(\sqrt2,\sqrt5)$ by symmetric congruence elimination, with the sign of
every pivot decided exactly and no floating point anywhere;
(ii) exact rank and the relation $w_{78}=w_{67}^2-1$ in SymPy; (iii)
\texttt{CoxIter}, which certifies cocompactness, dimension $6$, the $f$-vector
and the covolume. A single command, the verification script,
performs (i)--(iii).

\subsection{Provenance of the software}
\label{sec:provenance}

Most of the software on which this paper rests was written by Claude, a large
language model developed by Anthropic, working interactively under the author's
direction. This includes the order-type and Gale-diagram generator, the
combinatorial filters, the label enumerator and its forward checking, the
screening cascade, the exact certification code, the drivers, and the
verification scripts of \S\ref{sec:verification}. The author set the objectives,
chose the mathematical strategy, reviewed the code and the output, and is
responsible for the contents of this paper.

We disclose this because it bears on how the results should be read, and the
right response is not to ask the reader to trust the code's provenance but to
make the code unnecessary to trust. Every reduction from the full search space to
the final $\NumRequired$ decided types is a theorem, proved here or cited by
number (\S\ref{sec:completeness}); no step of that chain depends on the
implementation being correct, only on the implementation doing what the theorem
permits. Every accepted polytope is certified by exact arithmetic and then
re-checked by \texttt{CoxIter}, an independent and widely used program that
shares no code with ours. The emptiness verdicts do not need the numerical parts of
the code trusted either, and it is worth being precise about the division of
labour: the forward-checking prune, which on its own empties ten of the
$\NumRequired$ searched types, decides two predicates that carry no tolerance
(Proposition~\ref{prop:exactgates}); and each of the $\NumAssignments$ labellings
that reach a screen --- all of them inside the eleventh type --- carries an exact
refutation certificate (Propositions~\ref{prop:plainexact}
and~\ref{prop:wildexact}), for $\PlainRejected$ of them the non-vanishing of a
single determinant over $\QQ(\sqrt2,\sqrt3,\sqrt5)$, checkable independently of
this software.
The pipeline reproduces the published $d=5$ census
exactly through the identical code path (\S\ref{sec:d5valid}). The final Gram
matrix can be verified in one command with no reference to the search at all
(\S\ref{sec:checks}).

Readers inclined to weigh provenance heavily should weigh \S\ref{sec:d5valid},
\S\ref{sec:d4valid} and \S\ref{sec:assumptions} more heavily still.

\subsection{Data and code}
\label{sec:datacode}
Everything needed to check the results of this paper is available at \cite{Repo}:
the pipeline, the $\NumTypes$ combinatorial types, the $\NumSurvivors$-type
survivor list together with the facet filter that produces it, the $\NumSubtrees$
subtree certificates and per-type verdicts, the realizer records for the realizing
type, every labelling that reached a screen together with its exact refutation
certificate (\S\ref{sec:wildvalid}), the $d=5$ and $d=4$ census outputs,
and the verification scripts. Its \texttt{README} gives the commands, the expected
output of each, and the correspondence between the sections of this paper and the
code that produced them.

Two of those scripts are worth distinguishing from the rest, because neither
imports the pipeline: one re-verifies the Gram matrix of the realizing type from
the certificates alone, and one re-derives every number quoted here, recomputing
the coverage recursion of \S\ref{sec:exhaust} from the subtree certificates and
failing if any searched type is not closed, if any verdict rests on a truncation
at the refinement depth cap, on a wildcard window that reached its scan edge or on
a per-assignment deadline, if any labelling that reached a screen lacks an exact
refutation certificate, or if any labelling that survived a screen carries one.

\section{Assumptions and limitations}
\label{sec:assumptions}

We list, without softening, everything Theorem~\ref{thm:main} depends on beyond
the arguments given above.

\begin{enumerate}
\item\label{as:lit} \emph{Published theorems.} Lemma~\ref{lem:p2}
  (\cite{FeliksonTumarkin2009}), Lemma~\ref{lem:esselmann}
  (\cite[Lemma~6.7]{Esselmann1994}), Theorem~\ref{thm:burcroff35}
  (\cite[Thm.~3.5]{Burcroff2024}), Lemma~\ref{lem:burcroff55b}
  (\cite[Lemma~5.5(b)]{Burcroff2024}), Proposition~\ref{prop:mz35}
  (\cite[Prop.~3.5]{MaZheng2024}), Lann\'er's classification, and Vinberg's
  theory. Lemma~\ref{lem:prism}, which removes $\NumPrism$ types, rests in
  addition on the \emph{completeness} of the classification of compact hyperbolic
  Coxeter $d$-polytopes with $d+2$ facets --- Kaplinskaja's simplicial prisms
  \cite{Kaplinskaja1974} together with Esselmann's products of two simplices
  \cite{Esselmann1996}, the latter occurring only for $d=4$. Concretely it needs
  that no compact Coxeter $\Delta^2\times\Delta^3$ exists in $\HH^5$. That is a
  citation and not a consequence of Lann\'er's classification, so we list it
  separately.

\item\label{as:facet} \emph{The facet filter rests entirely on
  \cite{MaZheng2023}.} Lemma~\ref{lem:facet} removes $333$ of $\NumTypes$
  types, and it is exactly as reliable as Ma--Zheng's assertion that precisely
  six simple $5$-polytopes with $9$ facets admit a compact Coxeter structure.
  That classification is corroborated by Burcroff's independent count of the
  same $51$ polytopes and by our own reproduction of the census
  (\S\ref{sec:d5valid}), which independently finds the same six realizing
  types. If a seventh realizing $5$-polytope with a fifth missing-face profile
  were ever found, the filter would have to be rerun. We have not re-derived
  Ma--Zheng's $d=5$ classification from scratch by an independent method.

\item\label{as:db} \emph{Order-type database.} We rely on
  \cite{AichholzerAurenhammerKrasser2002,OrderTypeDB} being a complete list of
  realizable order types on $10$ points (\S\ref{sec:aak}).

\item\label{as:screen} \emph{Emptiness verdicts.} Acceptances are exact
  (\S\ref{sec:search}) and independently checked by \texttt{CoxIter}. Rejections
  come from two mechanisms, and both are exact.

  \emph{Forward checking.} Most labellings never reach a screen: they are pruned
  during enumeration by the ellipticity condition on a face's subdiagram or the
  Lann\'er condition on a missing face's. Only a false \emph{rejection} would be
  unsound, and neither test can produce one, because neither is numerical.
  Ellipticity is a condition on a labelled graph and needs no arithmetic; Lann\'er
  membership reduces to a determinant sign that is an integer comparison on three
  nodes and an exact computation in $K=\QQ(\sqrt2,\sqrt3,\sqrt5)$ on four or five,
  the higher-degree case $\cos(\pi/7)$ being excluded outright by
  Lemma~\ref{lem:wildcard}(c) (Proposition~\ref{prop:exactgates}). The search
  decides both this way, so the prune carries no tolerance. As corroboration the
  exact tests were also compared against the floating-point tests they replace:
  they agree on all $216$ label tuples on $3$ nodes and all $46{,}656$ on $4$
  nodes, exhaustively, and the exact Lann\'er test implies the weaker
  one-negative-eigenvalue condition in every one of those cases.

  \emph{The cascade and the wildcard screen.} Both are evaluated in double
  precision --- the cascade with root and vanishing thresholds of $10^{-7}$ to
  $10^{-12}$ and a leaf test at $10^{-7}$ (singular value) and $10^{-6}$
  (signature), the wildcard step with a multistart floor at $10^{-6}$ --- so a
  rejection from either, taken on its own, could be a false negative caused by an
  ill-conditioned minor sitting near a tolerance. Neither is taken on its own.
  Every labelling that reaches a screen and is not accepted carries an exact
  refutation: Proposition~\ref{prop:wildexact} for the $\NumWild$ wildcard-bearing
  labellings and Proposition~\ref{prop:plainexact} for the $\PlainRejected$ others.
  Those two counts together with the single acceptance are the $\NumAssignments$
  labellings that reach a screen, and the accepted one is refuted by neither. Those
  certificates are exact arithmetic in $K$ with outward-rounded rational intervals,
  so no tolerance of the screens enters them, and in particular neither the
  pair-resultant branch, which is provably never reached at $d=6$
  (Proposition~\ref{prop:cascade}), nor the tangential discriminant compared
  against a relative tolerance (\S\ref{sec:decide}) bears on any verdict: the
  certificates form neither quantity.

  So no emptiness verdict reported here depends on floating-point arithmetic. What
  the screens' tolerances still affect is how much work the exact steps are left to
  do: a screen that wrongly rejected a labelling would produce a rejection for
  which no certificate exists, which is reported rather than silently accepted
  (\S\ref{sec:decide}), and a screen that wrongly accepted one costs an exact
  certification that rejects it.

  \item\label{as:wild} \emph{Wildcard resolution.} Of the $\NumAssignments$
  labellings enumerated, $\NumWild$ carry a wildcard, and each is refuted by
  Proposition~\ref{prop:wildexact}: a single principal minor per labelling, on a
  compactified box carrying no bound on the weights, with no floating-point
  tolerance in the decision. The screen that precedes it --- a multistart local
  minimisation over $x_e\in[1.001,1000]$ and $c_e\in[\cos(\pi/7),1)$, rejecting
  when no start reaches $10^{-6}$ --- would rest on two things the rest of the
  paper avoids, the failure of a local search and a bounded range of dashed
  weights, and no verdict rests on it. What we do rely on is that the
  certifier's arithmetic is what it claims to be: the exact field arithmetic over
  $K$, the interpolation of each minor, and the outward rounding of every
  interval operation. Those are tested against independent computations
  (\S\ref{sec:wildvalid}), including that the certifier does not refute $\PP$
  itself, and calibrated against the $d=5$ census, where the answer is known. That
  calibration is exhaustive on the side that decides one-sidedness --- all
  $\DFiveWildScreenPass$ instances that survive the screen anywhere in the census,
  none of them refuted --- and sampled on the other, $\DFiveWildCert$ of
  $\DFiveWildJointKilled$ rejections certified.
  The integer scan bounded at $m\le100$, guarded by a
  diagnostic flag raised whenever a window reaches that limit, is a separate matter
  and does not arise at $d=6$: no wildcard-bearing labelling reaches it.

\item\label{as:tools} \emph{Tool correctness.} We rely on \texttt{CoxIter} for
  the final compactness and dimension checks, and on NumPy/SciPy/SymPy/mpmath.

\item\label{as:aut} \emph{Automorphism pruning.} The orbit symmetry-breaking of
  \S\ref{sec:search} is argued sound there, and its verdicts are reproduced with
  the device switched off on the seven searched types that can be enumerated
  without it and --- the case that matters --- on the subtree in which $\PP$ is
  found, where without pruning ten times as many labellings are enumerated and the
  same single polytope comes out. Four searched types cannot be enumerated without
  it, which is why it exists; for those the argument is the lex-leader one and not
  a measurement. Its failure mode would be silent, so we name it explicitly.

\end{enumerate}

\appendix

\section{The six realizing $5$-polytopes with $9$ facets}
\label{app:d5types}

Lemma~\ref{lem:facet} filters a $d=6$ type by requiring every facet that meets all
nine others to have the missing-face profile of one of the six combinatorial types
of simple $5$-polytopes with $9$ facets that admit a compact Coxeter structure
\cite[\S6]{MaZheng2023}. Those six, and hence the four profiles the filter tests
against, are recorded here so that the filter can be checked without reconstructing
them.

Facets are numbered $0,\dots,8$; each entry of the last column is a missing face,
written as the concatenation of its facets. Numbering follows
\cite{MaZheng2023}, whose list of all $322$ simple $5$-polytopes with $9$ facets we
matched to ours by exact hypergraph isomorphism. Those $322$ are all simple
$5$-polytopes with $9$ facets; the $\DFiveTypes$ types of \S\ref{sec:genvalid} are
the subset with $p\ge2$, which by Lemma~\ref{lem:p2} is the only part a compact
Coxeter structure can occur in. Under the type numbering of our own list, used in
\S\ref{sec:d5valid}, the six correspond as
\[
P_{9,319}\!\to\!0,\quad P_{9,322}\!\to\!6,\quad P_{9,302}\!\to\!19,\quad
P_{9,313}\!\to\!29,\quad P_{9,312}\!\to\!5,\quad P_{9,284}\!\to\!63 ,
\]
which is the order in which \S\ref{sec:d5valid} reports their polytope counts.

\begin{center}
{\small
\begin{tabular}{l r l p{0.46\textwidth}}
\toprule
type & \#polytopes & profile & missing faces\\
\midrule
$P_{9,284}$ & 1 & $2^{3} 3^{3} 4^{3}$ & \texttt{57,\,58,\,68,\,015,\,018,\,467,\,0123,\,2346,\,2347} \\
$P_{9,302}$ & 6 & $2^{5} 3^{1} 4^{1} 5^{2}$ & \texttt{47,\,57,\,58,\,68,\,78,\,015,\,2346,\,01234,\,01236} \\
$P_{9,312}$ & 1 & $2^{6} 5^{3}$ & \texttt{34,\,46,\,47,\,56,\,67,\,78,\,01235,\,01238,\,01258} \\
$P_{9,313}$ & 3 & $2^{4} 3^{2} 4^{2} 5^{1}$ & \texttt{46,\,56,\,68,\,78,\,234,\,347,\,0125,\,0158,\,01237} \\
$P_{9,319}$ & 22 & $2^{6} 5^{3}$ & \texttt{46,\,47,\,57,\,58,\,67,\,68,\,01234,\,01235,\,01238} \\
$P_{9,322}$ & 18 & $2^{6} 5^{3}$ & \texttt{34,\,46,\,47,\,56,\,68,\,78,\,01235,\,01237,\,01258} \\
\bottomrule
\end{tabular}}
\end{center}

The four distinct profiles are $2^6 5^3$ (shared by $P_{9,312}$, $P_{9,319}$ and
$P_{9,322}$), $2^5 3\,4\,5^2$ ($P_{9,302}$), $2^4 3^2 4^2 5$ ($P_{9,313}$) and
$2^3 3^3 4^3$ ($P_{9,284}$), which are the four listed in Lemma~\ref{lem:facet}.

\section{Hardware and timings}
\label{app:timings}
All computations were run on a single Apple M1 (8 cores, 16\,GB RAM),
macOS~26.5.2, Python~3.10.0 with NumPy, SciPy, SymPy and mpmath, and
\texttt{CoxIter} built from source. The $d=6$ classification consumed
$\CPUHours$ CPU-hours across $\NumSubtrees$ subtree tasks, one per searched type
and each exhausted at its root, so the refinement depth reached is $\MaxDepth$
against a cap of $14$. Subtree tasks are independent and were run concurrently. The
$d=5$ revalidation consumed $\DFiveCPUHours$ CPU-hours over its $\DFiveTypes$
types and the $d=4$ census $17.4$ CPU-hours over its $30$. Per-type timings are
in Table~\ref{tab:pertype}.

One environmental factor is worth recording, because it dominates the timings and
is invisible in the algorithm. The determinants driving the screen are $8\times8$
at most, far too small for a threaded BLAS to repay its synchronisation cost, so
running seven worker processes each with a default thread pool oversubscribes the
machine badly. Pinning every worker to a single BLAS thread
(one thread per worker and its equivalents) left the arithmetic
bit-for-bit identical---verified by hashing the recovered weights of all $24$
orbit representatives of the $G_{11}$ polytopes at one and at eight threads---while
reducing the cost of one representative $d=4$ type from $963$ to $22$ seconds.

\section{Branch counters and stall analysis for the cascade}
\label{app:counters}

Proposition~\ref{prop:cascade} rules out the \emph{structural} stall of the cascade:
running out of pinnable dashed edges. The cascade can also return
``inconclusive'' for value-dependent reasons, and these are worth separating,
because two of the three are confined to the pair-resultant branch that
Proposition~\ref{prop:cascade} shows is never entered in dimension $6$. A minor
that vanishes identically in its unknown, and a search tree exceeding $256$ leaves,
can occur on the single-unknown path; an identically-vanishing resultant and the
absence of a pinnable pair cannot arise at all when no pair is ever pinned. So only
two routes to the bounded-box fallback are available in dimension $6$.

Neither was observed. The run records $\NumCascade$ cascade decisions, all but the
single acceptance being ordinary rejections: the pair-resultant branch is entered
$0$ times, no stall of any kind fires, and the bounded-box fallback runs $0$ times.
The same instrumentation on the $d=4$ census of record makes $\DFourCascade$
cascade decisions, of which $\DFourPair$ enter the pair-resultant branch and
$\DFourFallback$ call into the bounded-box fallback. Those two counts are equal
because they must be: a refutation from the pair-resultant branch is not trusted
(\S\ref{sec:decide}), so every use of that branch is downgraded to inconclusive
and falls through to the fallback. The machinery therefore does fire where it is
reachable, and its absence at $d=6$ is not an artefact of the counters.

The hypothesis of Proposition~\ref{prop:cascade} is not delicate, and it is worth
recording how far it holds. The cascade is forced onto the pair-resultant branch
exactly when no $8$-minor isolates a single dashed edge, which happens in dimension
$4$ for the type whose dashed edges form a perfect matching---the $4$-cube---and for
no generated $d=6$ type at all. Refutations from that branch are not trusted
(\S\ref{sec:decide}), which is why its unreachability in dimension $6$ is worth
establishing rather than assuming.

We stress the limit of these measurements. They cover every cascade decision of
the search, but they count only the branches the instrumentation was written to
count, so they establish that the cascade's bounded-box fallback is not load-bearing
for any $d=6$ verdict without being a proof of it, and they say nothing about the
arithmetic \emph{within} a branch that does fire. They also say nothing about the wildcard feasibility step of
\S\ref{sec:wildcards}, which searches a box of the same shape. That step needs no
mitigation from counters, because no verdict rests on it: the $\NumWild$
wildcard-bearing labellings are refuted by the exact certificates of
Proposition~\ref{prop:wildexact}, which use no box in the ultraparallel weights.

\bibliographystyle{amsplain}
\bibliography{references}

\end{document}